\newtheoremstyle{mystyle}
{}
{}
{}
{}
{\bfseries}
{.}
{ }
{}
\theoremstyle{plain}
\theoremstyle{mystyle}
\newtheorem{thm}{Theorem}[section]
\theoremstyle{plain}
\theoremstyle{mystyle}
\newtheorem{prop}[thm]{Proposition}
\theoremstyle{plain}
\theoremstyle{mystyle}
\newtheorem{cor}[thm]{Corollary}
\theoremstyle{plain}
\theoremstyle{mystyle}
\newtheorem{lem}[thm]{Lemma}
\theoremstyle{plain}
\theoremstyle{mystyle}
\theoremstyle{plain}
\theoremstyle{mystyle}
\theoremstyle{plain}
\theoremstyle{mystyle}
\newtheorem{rem}[thm]{Remark}
\theoremstyle{plain}
\theoremstyle{mystyle}
\newtheorem{definition}[thm]{Definition}
\newcommand{\id}{{\boldsymbol{\mathbbm{1}}}}
\newcommand{\R}{\mathbb{R}}
\newcommand{\norm}[1]{\lVert #1 \rVert}
\newcommand{\SO}{{\rm SO}}
\newcommand{\OO}{{\rm O}}
\newcommand{\DD}{\mathrm{D}}
\newcommand{\WW}{\mathrm{W}}
\newcommand{\innerproduct}[1]{\langle #1 \rangle}
\newcommand{\iprod}{\innerproduct}
\newcommand*\dif{\mathop{}\!\mathrm{d}}
\newlength{\dhatheight}
\DeclareMathOperator{\diag}{diag}
\DeclareMathOperator{\Sym}{Sym}
\DeclareMathOperator{\Cof}{Cof}
\DeclareMathOperator{\dev}{dev}
\DeclareMathOperator{\sym}{sym}
\DeclareMathOperator{\tr}{tr}
\DeclareMathOperator{\ZJ}{ZJ}
\def\barr{\begin{array}}
\def\tr{\textnormal{tr}}
\def\sk{\textnormal{skew}}
\def\dd{\displaystyle}
\def\barr{\begin{array}}
\def\earr{\end{array}}
\def\becn{\begin{equation*}}
\def\endec{\end{equation}}
\def\endecn{\end{equation*}}
\def\C{\mathbb{C}}
\let\@fnsymbol\@arabic
\numberwithin{equation}{section}
\global\long\def\pD{\textrm D}
\newcommand{\normb}[1]{\bigl\lVert #1 \bigr\rVert}
\newcommand{\scal}[2]{\left\langle#1,#2\right\rangle}
\newcommand{\scalb}[2]{\big\langle#1,#2\big\rangle}
\newcommand{\scalB}[2]{\Big\langle#1,#2\Big\rangle}
\newcommand{\parton}[1]{\left(#1\right)}
\newcommand{\partonb}[1]{\big(#1\big)}
\newcommand{\partonB}[1]{\Big(#1\Big)}
\newcommand{\partonbb}[1]{\bigg(#1\bigg)}
\newcommand{\partonBB}[1]{\Bigg(#1\Bigg)}
\newcommand{\parqB}[1]{\Big[#1\Big]}
\newcommand{\parqbb}[1]{\bigg[#1\bigg]}
\newcommand{\grafb}[1]{\big\{#1\big\}}
\newcommand{\grafB}[1]{\Big\{#1\Big\}}
\global\long\def\bR{\mathbb{R}}
\global\long\def\hyp{\textrm{hyp}}
\global\long\def\ela{\textrm{ela}}
\title{Hypo-elasticity and logarithmic strain}
\renewcommand*{\@fnsymbol}[1]{\ifcase#1\or1\or2\or3\or4\or5\or6\or7\or$\ast$\else\fi}
\begin{document}
\title{A constitutive condition for idealized isotropic Cauchy elasticity involving the logarithmic strain}
 \date{\it Dedicated to Professor J\"org Schr\"oder on the occasion of his 60th anniversary  }
\author{
Marco Valerio d'Agostino\thanks{
Marco Valerio d'Agostino, GEOMAS, INSA-Lyon, Université de Lyon, 20 avenue Albert Einstein, 69621, Villeurbanne cedex, France, email: marco-valerio.dagostino@insa-lyon.fr
}, \qquad
Sebastian Holthausen\thanks{
Sebastian Holthausen, University of Duisburg-Essen, Chair for Nonlinear Analysis and Modelling,  Faculty of Mathematics, Thea-Leymann-Stra{\ss}e 9,
D-45127 Essen, Germany, email: sebastian.holthausen@uni-due.de
}, \qquad
Davide Bernardini\thanks{
Davide Bernardini, Department of Structural and Geotechnical Engineering, Sapienza University of Rome, Rome, Italy, e-mail: davide.bernardini@uniroma1.it
}, \\[0.8em]
Adam Sky\thanks{
Adam Sky, Institute of Computational Engineering and Sciences, Department of Engineering, Faculty of Science, Technology and Medicine, University of Luxembourg, 6 Avenue de la Fonte, L-4362 Esch-sur-Alzette, Luxembourg, e-mail: adam.sky@uni.lu
}, \qquad
Ionel-Dumitrel Ghiba\thanks{
Ionel-Dumitrel Ghiba, Alexandru Ioan Cuza University of Ia\c si, Department of Mathematics,  Blvd. Carol I, no. 11, 700506 Ia\c si, Romania;  Octav Mayer Institute of Mathematics
of the Romanian Academy, Ia\c si Branch, 700505 Ia\c si, email: dumitrel.ghiba@uaic.ro
}, \qquad
Robert J. Martin\thanks{
Robert J. Martin,  Lehrstuhl für Nichtlineare Analysis und Modellierung, Fakultät für Mathematik, Universität Duisburg-Essen, Thea-Leymann Str. 9, 45127 Essen, Germany, email: robert.martin@uni-due.de
} \\[0.6em]
and \\[0.6em]
Patrizio Neff\thanks{
Patrizio Neff, University of Duisburg-Essen, Head of Chair for Nonlinear Analysis and Modelling, Faculty of Mathematics, Thea-Leymann-Stra{\ss}e 9,
D-45127 Essen, Germany, email: patrizio.neff@uni-due.de}
}
\maketitle
\vspace{-0,6cm}
\begin{abstract}
\noindent Following Hill and Leblond, the aim of our work is to show, for isotropic nonlinear elasticity, a relation between the corotational Zaremba-Jaumann objective derivative of the Cauchy stress $\sigma$, i.e.
	\begin{align*}
	\frac{\DD^{\ZJ}}{\DD t}[\sigma] = \frac{\dif}{\dif t}[\sigma] - W \, \sigma + \sigma \, W, \qquad W = \sk(\dot F \, F^{-1})
	\end{align*}
and a constitutive requirement involving the logarithmic strain tensor. Given the deformation tensor \break $F = \DD \varphi$, the left Cauchy-Green tensor $B = F \, F^T$, and the strain-rate tensor $D = \sym(\dot F \, F^{-1})$, we show that
	\begin{equation}
	\label{eqCPSdef}
	\begin{alignedat}{2}
	   \forall \,D\in\Sym(3) \! \setminus \! \{0\}:
	   ~
	   \scal{\frac{\DD^{\ZJ}}{\DD t}[\sigma]}{D} > 0
	   \quad
	   &\iff
	   \quad
	   \log B
	   \longmapsto
	   \widehat\sigma(\log B)
	   \;\textrm{is strongly Hilbert-monotone} \\
		&\iff \quad \sym \DD_{\log B} \widehat \sigma(\log B) \in \Sym^{++}_4(6) \quad \text{(TSTS-M$^{++}$)},
	\end{alignedat} \tag{1}
	\end{equation}
where $\Sym^{++}_4(6)$ denotes the set of positive definite, (minor and major) symmetric fourth order tensors. We call the first inequality of \eqref{eqCPSdef} ``corotational stability postulate'' (CSP), a novel concept, which implies the \textbf{T}rue-\textbf{S}tress \textbf{T}rue-\textbf{S}train strict Hilbert-\textbf{M}onotonicity (TSTS-M$^+$) for $B \mapsto \sigma(B) = \widehat \sigma(\log B)$, i.e.
\begin{equation*}
	\scal{\widehat\sigma(\log B_1)-\widehat\sigma(\log B_2)}{\log B_1-\log B_2} > 0
	\qquad
	\forall \, B_1\neq B_2\in\Sym^{++}(3) \, .
\end{equation*}
A similar result, but for the Kirchhoff stress $\tau = J \, \sigma$ has been shown by Hill as early as 1968. Leblond translated this idea to the Cauchy stress $\sigma$ but only for the hyperelastic case. In this paper we expand on the ideas of Hill and Leblond, extending Leblonds calculus to the Cauchy elastic case. \\
\\
\textbf{Keywords:} nonlinear elasticity, hyperelasticity, rate-formulation, Cauchy-elasticity, material stability, Hill's method of Lagrangian axis, corotational derivatives, constitutive inequalities, logarithmic strain, stress increases with strain \\
\\
\textbf{Mathscinet} classification
15A24, 73G05, 73G99, 74B20
\end{abstract}
\clearpage
\tableofcontents
\section{Introduction}
This contribution builds upon the work of Hill \cite{hill1968constitutivea,hill1968constitutiveb, hill1978}. In search for reasonable constitutive assumptions to be placed on the nonlinear elasticity law that maps the left Cauchy-Green tensor $B = F \, F^T$ to the Cauchy stress tensor $B \mapsto \sigma(B)$ such that \textbf{physically reasonable response} is ensured, Hill considered
	\begin{align}\label{eq2}
	\langle \frac{\DD^{\sharp}}{\DD t}[\tau] ,D\rangle>0 \qquad \text{for all}\quad D\neq 0
	\end{align}
for the spatial Kirchhoff stress tensor $\tau = J \, \sigma$ and a specific subclass of objective derivatives, amongst which was the Zaremba-Jaumann rate \cite{jaumann1905, jaumann1911geschlossenes, zaremba1903forme}. He continued to examine the consequences of imposing \eqref{eq2} for the constitutive law $B\mapsto \tau(B)$ and obtained with his complicated method of Lagrangian axes for the Zaremba-Jaumann rate the result that $\tau(B)=\widehat{\tau}(\log B)$ satisfies the condition
	\begin{align} \label{eq2.1}
	\DD_{\log B} \widehat \tau(\log B) \in \Sym^{++}_4(6) \qquad \text{if and only if \eqref{eq2} is satisfied,}
	\end{align}
which itself implies the strict monotonicity of $\widehat \tau(\log B)$ in $\log B$, i.e.
	\begin{align}\label{eq3}
	\langle \widehat{\tau}(\log B_1)-\widehat{\tau}(\log B_2), \log B_1-\log B_2\rangle>0 \quad \forall \, B_1,B_2\in {\rm Sym}^{++}(3), \quad B_1\neq B_2.
	\end{align}
Inequality \eqref{eq3} is known as {\bf Hill's inequality} \cite{sidoroff1974restrictions}. A prominent constitutive law which satisfies \eqref{eq3} is the Hencky energy \cite{Neff_Osterbrink_Martin_Hencky13, NeffGhibaLankeit, NeffGhibaPoly} where
	\begin{equation}
	\label{eq4}
	\begin{alignedat}{2}
	\WW_{\text{Hencky}}(F) = \widehat \WW(\log V) &= \mu \, \norm{\log V}^2 + \frac{\lambda}{2} \, \tr^2(\log V), \\
	\tau = \DD_{\log V} \widehat \WW(\log V) = 2\,\mu\, \log V+\lambda\, \tr(\log V)\cdot \id &= \mu \, \log B + \frac{\lambda}{2} \, \tr(\log B) \cdot \id, \qquad \mu > 0, \quad 2 \, \mu + 3 \, \lambda > 0.
	\end{alignedat}
	\end{equation}
Unfortunately, while the Hencky model is arguably the best nonlinear elasticity model \cite{anand1979} with only two material parameters for moderate strains, \eqref{eq4} gets physically inadequate for large strains as e.g., the volumetric response is non-monotone and the model is not LH-elliptic (first shown in \cite{neffphd}, see also \cite{martin2019} and \cite{xiao2001}).

Leblond \cite{leblond1992constitutive} took up Hill's development but confined himself to the hyperelastic case together with the Zaremba-Jaumann rate, imposing \eqref{eq2} for the Cauchy stress $\sigma$. He obtained similarly to Hill for \newline $\sigma(B)=\widehat{\sigma}(\log B)$
	\begin{align} \label{eq4.1}
	\langle \frac{\DD^{\ZJ}}{\DD t}[\sigma] ,D\rangle>0 \qquad \text{for all}\quad D\neq 0 \qquad \iff \qquad \DD_{\log B} \widehat \sigma(\log B) \in \Sym^{++}_4(6),
	\end{align}
which again implies strict monotonicity of $\widehat \sigma(\log B)$ in $\log B$, i.e.
	\begin{align}\label{eq5}
	\langle \widehat{\sigma}(\log B_1)-\widehat{\sigma}(\log B_2), \log B_1-\log B_2\rangle>0, \qquad \forall \, B_1,B_2 \in {\rm Sym}^{++}(3), \; B_1\neq B_2.
	\end{align}
Since $\frac{\DD^{\rm ZJ}}{\DD t}[\sigma]$ is a corotational rate, we will refer to the requirement
	\begin{align}
	\langle \frac{\DD^{\rm ZJ}}{\DD t}[\sigma],D\rangle>0 \quad \forall \, D \in \Sym(3) \! \setminus \! \{0\} \qquad \text{as {\bf corotational stability postulate (CSP)}.}
	\end{align}
The right member of \eqref{eq4.1} has already been introduced as the true-stress true-strain monotonicity condition TSTS-M$^{++}$ \cite{jog2013conditions, NeffGhibaLankeit}. The goal of this paper is to revisit Leblond's result characterizing CSP and to extend it to the Cauchy-elastic case, being careful to give full proofs. As a side product we will also prove rigorously Hill's equivalence \eqref{eq2.1} in the Appendix \ref{appendixhill}.
\section{The corotational stability postulate in the hyperelastic case} \label{sec:leblond}
Before we expand the argument from Leblond \cite{leblond1992constitutive} for the hyperelastic case, we recall the definition of the \textbf{Zaremba-Jaumann} derivative using the spin given by the vorticity tensor $W = \sk \, L, L = \dot F \, F^{-1}$,
	\begin{align}
	\label{ZJrate01}
	\boxed{\frac{\DD^{\ZJ}}{\DD t}[\sigma] := \frac{\dif}{\dif t}[\sigma] + \sigma \, W - W \, \sigma = Q \, \frac{\dif}{\dif t}[Q^T \, \sigma \, Q] \, Q^T  \quad \text{for $Q(t) \in \OO(3)$ so that} \;  W =\dot{Q} \, Q^T\in \mathfrak{so}(3).}
	\end{align}
Here, $\frac{\dif}{\dif t}[\sigma]$ denotes the material or substantial time-derivative. For more information on the notation, we refer to the Appendix \ref{appendixnotation}. \\
\\
With formula \eqref{ZJrate01} we prove next that 
	\begin{align}
	\label{eqmarco01}
	\langle \frac{\DD^{\ZJ}}{\DD t}[\sigma], D \rangle > 0 \quad \forall \, D \neq 0 \qquad \iff \qquad  \sym\pD_{\log\lambda}\widehat\sigma(\log \lambda)\in\Sym^{++}(3),
	\end{align}
if $\sigma$ derives from hyperelasticity. Here, the expression $\widehat \sigma(\log \lambda)$ denotes the vector of principal stresses of $\widehat \sigma(\log V)$ (i.e.~the vector consisting of the eigenvalues of $\widehat \sigma(\log V)$). Due to isotropy we have the equivalence (cf.~Remark \ref{remA5} in the Appendix)
	\begin{align}
	 \sym\pD_{\log\lambda}\widehat\sigma(\log \lambda)\in\Sym^{++}(3) \qquad \iff \qquad \sym \DD_{\log B} \widehat \sigma(\log B) \in \Sym^{++}_4(6),
	\end{align}
which can be combined with \eqref{eqmarco01} to obtain the sought-after relation
	\begin{align}
	\langle \frac{\DD^{\ZJ}}{\DD t}[\sigma], D \rangle > 0 \quad \forall \, D \neq 0 \qquad \iff \qquad \sym \DD_{\log B} \widehat \sigma(\log B) \in \Sym^{++}_4(6).
	\end{align}
The proof is partitioned into several technical steps.
\subsection{Corotational stability postulate in terms of the quadratic form $Q_{\hyp}$} \label{sec3.01}
In this section we demonstrate how the corotational stability postulate (CSP) in terms of the Zaremba-Jaumann rate $\langle\frac{\DD^{\ZJ}}{\DD t}[\sigma],D\rangle$ can be expressed as a quadratic form $Q_{\hyp}(\dot E)$, where $E$ is the Green-Lagrange strain tensor $E:= \frac12 \, (C - \id)$. \\
\\
In a first step we use the defining relations
	\begin{align}\label{eq.sigma}
	    \frac{\DD^{\ZJ}}{\DD t}[\sigma] 
	    = 
	    \frac{\dif}{\dif t}[\sigma] 
	    + 
	    \sigma \, W - W \, \sigma 
	    \qquad 
	    \text{and} 
	    \qquad 
	    \sigma 
	    = 
	    \frac{1}{J} \, F \, S_2 \, F^T
	    =
	    \frac{1}{J} \, S_1 \, F^T
	    ,
	\end{align}
where $S_1= \DD_F \WW(E)$ and $S_2 = \DD_E \WW(E)$ denote the first and second Piola-Kirchhoff stress tensor respectively.\footnote
{
Observe the notational difference between the elastic energy $\WW(F)$ and the vorticity tensor $W = \sk L$.
}
Upon additionally setting $L = D + W$, with $D \in \Sym(3)$ and $W \in \mathfrak{so}(3)$ we then obtain
	\begin{equation}
	\label{eqfirsteq01}
	\begin{alignedat}{2}
	\frac{\DD^{\ZJ}}{\DD t}[\sigma] &= \frac{\dif}{\dif t}[\sigma] + [\sigma \, W - W \, \sigma] = \frac{\dif}{\dif t}[J^{-1} \, F \, S_2 \, F^T] + [\sigma \, W - W \, \sigma] \\
	&= [-J^{-2} \, \dot{J} \, F \, S_2 \, F^T] + J^{-1}(\dot{F} \, S_2 \, F^T + F \, \dot{S_2} \, F^T + F \, S_2 \, \dot{F}^T) + [\sigma \, W - W \, \sigma] \\
	&= [-J^{-2} \, \dot{J} \, F \, S_2 \, F^T + J^{-1} \, F \, \dot{S_2} \, F^T] + J^{-1} (\dot{F} \, S_2 \, F^T + F \, S_2 \, \dot{F}^T) + [\sigma \, W - W \, \sigma] \\
	&= [-J^{-2} \, \dot{J} \, F \, S_2 \, F^T + J^{-1} \, F \, \dot{S_2} \, F^T] + J^{-1}(\dot{F} \, F^{-1} \, F \, S_2 \, F^T + F \, S_2 \, F^T \, F^{-T} \, \dot{F}^T) + [\sigma \, W - W \, \sigma] \\
	&= [-J^{-2} \, \dot{J} \, F \, S_2 \, F^T + J^{-1} \, F \, \dot{S_2} \, F^T] + L \, \sigma + \sigma \, L^T + [\sigma \, W - W \, \sigma] \\
	&= [-J^{-2} \, \dot{J} \, F \, S_2 \, F^T + J^{-1} \, F \, \dot{S_2} \, F^T] + (L - W) \, \sigma + \sigma \, (L^T + W) \\
	&= [-J^{-2} \, \dot{J} \, F \, S_2 \, F^T + J^{-1} \, F \, \dot{S_2} \, F^T] + D \, \sigma + \sigma \, D \\
	&= \frac{1}{J} [F \, \dot{S_2} \, F^T + D \, F \, S_2 \, F^T + F \, S_2 \, F^T \, D - \frac{\dot{J}}{J} \, F \, S_2 \, F^T].
	\end{alignedat}
	\end{equation}
This expression may now be used to reformulate the corotational stability postulate $\langle \frac{\DD^{\ZJ}}{\DD t}[\sigma],D \rangle > 0$. \break Observing the additional identities
	\begin{equation}
	\begin{alignedat}{2}
	\frac{\dot{J}}{J} &= \tr (D),
	\qquad
	    \dot{E} 
	    = 
	    \frac{\dif}{\dif t}\left[\frac12 \, (F^T \, F - \id)\right] = \frac12 \, (\dot{F}^T \, F + F^T \, \dot{F}),  \\
	    D
	    &= 
	    \frac12 \, (\dot{F} \, F^{-1} + F^{-T} \, \dot{F}^T) 
	    \quad
	    \iff 
	    \quad 
	    F^{-T} \, \dot{E} \, F^{-1} = D,
	\end{alignedat}
	\end{equation}
we obtain
	\begin{align}
	J \, \left\langle \frac{\DD^{\ZJ}}{\DD t}[\sigma] , D \right\rangle &= \langle F \, \dot{S_2} \, F^T + D \, F \, S_2 \, F^T + F \, S_2 \, F^T \, D - \frac{\dot{J}}{J} \, F \, S_2 \, F^T , D \rangle \notag \\
	&= \langle F \, \dot{S_2} \, F^T + F^{-T} \, \dot{E} \, \underbrace{F^{-1} \, F}_{\id} \, S_2 \, F^T + F \, S_2 \, \underbrace{F^T \, F^{-T}}_{\id} \, \dot{E} \, F^{-1} - \tr(D) \, F \, S_2 \, F^T , F^{-T} \, \dot{E} \, F^{-1} \rangle \notag \\
	&= \langle F \, \dot{S_2} \, F^T , F^{-T} \, \dot{E} \, F^{-1} \rangle - \langle \underbrace{F^{-T} \, \dot{E} \, F^{-1} , \id \rangle}_{\tr(D)} \, \langle F \, S_2 \, F^T , F^{-T} \, \dot{E} \, F^{-1} \rangle \notag \\
	&\quad \, + \langle F^{-T} \, \dot{E} \, S_2 \, F^T , F^{-T} \, \dot{E} \, F^{-1} \rangle + \langle F \, S_2 \, \dot{E} \, F^{-1} , F^{-T} \, \dot{E} \, F^{-1} \rangle \notag \\
	&= \langle \dot{S_2} , \dot{E} \rangle - \langle \dot{E} , \underbrace{F^{-1} \, F^{-T}}_{= \; (F^T F)^{-1}} \rangle \, \langle S_2 , \dot{E} \rangle \\
	&\quad \, + \langle \underbrace{F^{-1} \, F^{-T}}_{= \; (F^T F)^{-1}} \, \dot{E} \, S_2 , \dot{E} \, \underbrace{F^{-1} \, F}_{\id} \rangle + \langle \underbrace{F^{-1} \, F}_{\id} \, S_2 \, \dot{E} , \dot{E} \, \underbrace{F^{-1} \, F^{-T}}_{= \; (F^T F)^{-1}} \rangle \notag \\
	&= \langle \dot{S_2} , \dot{E} \rangle - \langle \dot{E} , C^{-1} \rangle \, \langle S_2 , \dot{E} \rangle + \langle C^{-1} \, \dot{E} \, S_2 \, \dot{E}, \id \rangle + \langle C^{-1} \, \dot{E} \, S_2 \, \dot{E} , \id \rangle \notag \\
	&= \langle \dot{S_2} , \dot{E} \rangle + 2 \, \langle C^{-1} \, \dot{E} \, S_2 \, \dot{E} , \id \rangle - \langle C^{-1}, \dot{E} \rangle \, \langle S_2 , \dot{E} \rangle \notag \\
	&= \langle \dot{S_2} , \dot{E} \rangle + 2 \, \langle C^{-1} \, \dot{E} , \dot{E} \, S_2 \rangle - \langle C^{-1}, \dot{E} \rangle \, \langle S_2 , \dot{E} \rangle \notag \\
	&= 
	\langle \pD_E S_2(E) . \dot{E}, \dot{E} \rangle 
	+ 
	2 \, \langle C^{-1} \, \dot{E} , \dot{E} \, S_2 \rangle - \langle C^{-1} , \dot{E} \rangle \, \langle S_2 , \dot{E} \rangle > 0, \notag
	\end{align}
for all non-zero rates $\dot{E} \in \Sym(3)$. Since 
	\begin{align}
	S_2 = \pD_E \WW(E) = 2 \, \pD_C \widetilde \WW(C) \qquad \text{and} \qquad \dot S_2(x,t) = \frac{\dif}{\dif t} \, \pD_E \WW(E(x,t)) = \pD_E^2 \WW(E(x,t)) . \dot E(x,t),
	\end{align}
this is equivalent to requiring that the quadratic form
	\begin{equation}
	\label{eqleblond01}
	\begin{alignedat}{2}
	Q_{\hyp}(\dot{E}) 
	&= 
	\pD_E^2\WW(E).(\dot{E},\dot{E}) + 2 \, \langle C^{-1} \, \dot{E} , \dot{E} \cdot \pD_E \WW(E) \rangle - \langle C^{-1} , \dot{E} \rangle \, \langle \pD_E \WW(E) , \dot{E} \rangle 
	\\
	&= 
	\langle \pD_E^2\WW(E) . \dot{E},\dot{E}\rangle + 2 \, \langle C^{-1} \, \dot{E} , \dot{E} \, \pD_E \WW(E) \rangle - \langle C^{-1} , \dot{E} \rangle \, \langle \pD_E \WW(E) , \dot{E} \rangle
	\\
	&= 
	\langle \pD_E S_2(E) . \dot{E} , \dot{E} \rangle + 2 \, \langle C^{-1} \, \dot{E} , \dot{E} \, S_2(E) \rangle - \langle C^{-1} , \dot{E} \rangle \, \langle S_2(E) , \dot{E} \rangle
	\\
	&=
	\langle \dot S_2(E) , \dot{E} \rangle + 2 \, \,\textrm{tr}\,\big(C^{-1}\,\dot E\,S_2\,\dot E\big) - \langle C^{-1} , \dot{E} \rangle \, \langle S_2(E) , \dot{E} \rangle
	\end{alignedat}
	\end{equation}
must be positive-definite over the space of second rank symmetric tensors.


\subsection{Positive definiteness of the quadratic form $Q_\hyp$ in isotropic hyperelasticity}
In this section, we repropose in all detail the procedure developed by Leblond in \cite{leblond1992constitutive}, to show that the positive definiteness of the quadratic form $Q_\hyp$ is equivalent to the positive definiteness of $\sym \, \pD_{\log\lambda}\widehat\sigma(\log \lambda)$. It is assumed that the considered elastic medium is isotropic, i.e.~the elastic energy function $\WW(F)$ fulfills
\begin{align}
   \WW(F)
    =
   \WW(F \, Q)
    \qquad
    \qquad
    \forall \,
    Q\in\SO(3).
\end{align}
\begin{rem}[Notation]
We use the following convention. Whenever vector-valued functions are considered, we label the elements of a collection of functions with upper indices and the components of these vector-valued functions with lower indices. For example, considering the family $\{U^i\}_i$, we mean functions  $U^i:\Omega\subset\bR^q\to\bR^d$ and $U^i_j$  denotes the j-component of $U^i$.
\end{rem}
\noindent A key role in the upcoming analysis is the representation of occurring tensors in a basis of eigenvectors of the conjugate Green-Lagrange strain tensor $E(x,t)=\frac{1}{2}\partonb{F^T(x,t) \, F(x,t)-\id}$. Therefore, let $\{U^{i}(x,t)\}_{i=1}^3$ denote the \textbf{eigenvectors} of $E(x,t)$ and $\{e_i(x,t)\}_{i=1}^3$ the corresponding \textbf{eigenvalues}. Then we can establish this first preliminary result.

\begin{prop} \label{prop1}
	Consider a hyperelastic isotropic material with energy density $\WW(E)$. Then there exists an auxiliary function $\breve{\WW}(e)$, where $e(x,t)=\partonb{e_1(x,t),e_2(x,t),e_3(x,t)}$ is the vector field of the eigenvalues of $E(x,t)$, such that
	\begin{align}
	     \WW(E(x,t))
	     =
	     \breve{\WW}(e(x,t))
	     \qquad
	     \qquad
	     \forall \,
	     (x,t)\in\Omega\times[0,T).
	\end{align}
\end{prop}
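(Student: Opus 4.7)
The plan is to treat this as the classical fact that an isotropic scalar function on symmetric tensors depends only on its spectrum, and to unwind the notational translation between the $F$-formulation in the isotropy hypothesis and the $E$-formulation in the statement. First I would convert the given material isotropy $\WW(F) = \WW(F \, Q)$, $Q \in \SO(3)$, into an invariance on $\Sym(3)$. Since $\WW$ is regarded throughout Section~\ref{sec:leblond} as a function of the Green--Lagrange tensor $E = \tfrac12 (F^T F - \id)$, and since $F \mapsto F \, Q$ sends $C = F^T F$ to $Q^T C\, Q$ and hence $E$ to $Q^T E\, Q$, material isotropy becomes
\begin{align}
\WW(E) \;=\; \WW(Q^T E\, Q) \qquad \forall \, Q \in \SO(3), \; E \in \Sym(3).
\end{align}

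Second, I would exploit the spectral decomposition of $E(x,t)$. Assembling the orthonormal eigenvectors $\{U^i(x,t)\}_{i=1}^3$ as the columns of $R := [\,U^1 \mid U^2 \mid U^3\,] \in \OO(3)$ gives the factorization $E = R \, \diag(e_1,e_2,e_3)\, R^T$. When $\det R = -1$ I flip the sign of, say, $U^1$; this is legitimate since eigenvectors are only defined up to sign, and it leaves the factorization intact while arranging $R \in \SO(3)$. I would then define
\begin{align}
\breve{\WW}(e_1,e_2,e_3) \;:=\; \WW(\diag(e_1,e_2,e_3))
\end{align}
and apply the isotropy identity with $Q = R^T \in \SO(3)$ to conclude
\begin{align}
\WW(E(x,t)) \;=\; \WW(R^T E(x,t)\, R) \;=\; \WW(\diag(e_1,e_2,e_3)) \;=\; \breve{\WW}(e(x,t)),
\end{align}
which is the asserted identity.

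The only genuinely non-routine point is the well-definedness of $\breve{\WW}$: the triple $(e_1,e_2,e_3)$ is determined by $E$ only up to a permutation, and the eigenbasis itself is not unique when eigenvalues are repeated. I would handle this by observing that every permutation of the diagonal entries of $\diag(e_1,e_2,e_3)$ can be realised by conjugation with a suitable signed permutation matrix belonging to $\SO(3)$ (if the chosen permutation matrix has determinant $-1$, negate one of its rows), and that any rotation within a higher-dimensional eigenspace likewise sits in $\SO(3)$. The isotropy identity then forces $\breve{\WW}$ to be symmetric in its arguments and independent of the particular eigenbasis chosen, so the formula $\breve{\WW}(e) := \WW(\diag(e))$ unambiguously defines a function on the unordered spectrum, completing the proof.
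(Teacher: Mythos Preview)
Your proof is correct and follows essentially the same route as the paper: translate material isotropy into conjugation-invariance on symmetric tensors, apply the spectral theorem to diagonalise $E$, and use the invariance to reduce $\WW$ to a function of the eigenvalues. The paper detours through $C = 2E + \id$ and the auxiliary function $\overline{\WW}(C)$ before arriving at $\breve{\WW}$, whereas you work directly on $E$; you are also more explicit about the well-definedness of $\breve{\WW}$ under permutations of the $e_i$, a point the paper leaves implicit.
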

\begin{proof}
Considering the stored energy density $\WW(E)$ as a function $\WW(E) = \overline \WW(C)$ of the Cauchy strain tensor $C$, the isotropy requirement reads as \cite{munch2018}
\begin{align}
    \overline \WW(C)
    =
    \overline \WW(Q\,C\,Q^T)
    \qquad
    \qquad
    \forall \,
    Q\in\SO(3).
\end{align}
Since $E=\frac{1}{2} (C-\id) \; \iff \; C = 2 \, E + \id$ and $\WW(E)=\overline \WW(C)$, we obtain
\begin{align}
    \WW(E)
    =
    \overline \WW(2 \, E + \id)
    =
    \overline \WW\partonb{Q \, ( 2 \, E + \id ) Q^T }
    \qquad
    \qquad
    \forall \,
    Q\in\SO(3).
\end{align}
Additionally, since $C = F^T \, F = 2 \, E + \id\in\Sym^{++}(3)$ by the spectral theorem, there exists a couple \break $(R,\widehat D)\in\textrm{SO}(3)\times \textrm{Diag}(3)$, such that $2 \, E + \id = R^T\, \widehat D\,R$ and the diagonal elements of $\widehat D$ are the eigenvalues of  $2 \, E + \id$. Therefore
\begin{align}
    \WW(E)
    =
    \overline \WW(2 \, E + \id)
    =
    \overline \WW\partonb{Q^T \, ( 2 \, E + \id ) Q }
    =
    \overline \WW\partonb{Q^T  R^T  \widehat D \, R \, Q }
    =
    \overline \WW\partonb{(R\,Q)^T  \widehat D \, R \, Q }
    \qquad
    \forall \,
    Q\in\SO(3),
\end{align}
with an orthogonal matrix $\widetilde{Q} = R \, Q$, so that by the isotropy
\begin{align}
    \WW(E)
    =
    \overline \WW(2 \, E + \id)
    =
    \overline \WW\partonb{\widetilde Q^T  \widehat D \, \widetilde Q }
    =
    \overline \WW (D) = \overline \WW(\diag(2 \, e_1 + 1, 2 \, e_2 + 1, 2 \, e_3 + 1)).
\end{align}
Inasmuch as the spectrum $\textrm{spt}(C) = \textrm{spt}(2 \, E + \id) = 2 \, \textrm{spt}(E) + 1$, we can finally introduce the auxiliary function $\breve \WW:\bR^3\to\bR$ of the eigenvalues of $E$, such that
\begin{align}
   \WW(E(x,t))
   &= \overline \WW (\widehat D) =
   \breve \WW(e(x,t))
   \qquad
   \qquad
   \forall (x,t)\in\Omega\times[0,T). \qedhere
\end{align}
\end{proof}
\begin{cor} \label{cor2}
The second Piola-Kirchhoff stress tensor can be expressed relative to the basis of the (unit) eigenvectors $\{U^i\}_{i=1}^3$ of $E$ as
\begin{equation}\label{eq:stress_Kirchh_dec}
	S_2(x,t)
	=
	\sum_{i=1}^3\frac{\partial\breve{\WW}}{\partial e_i}\partonb{e(x,t)} 
	\;
	U^i(x,t)\otimes U^i(x,t)
	\in\Sym(3)
\end{equation}
and hence 
\begin{align}\label{eq:der stress_Kirchh_dec}
	\dot S_2(x,t)
	=
	\sum_{i=1}^3
	\parqbb{
		\partonbb{
			\sum_{j=1}^3
			\frac{\partial^2\breve{\WW}}{\partial e_i\partial e_j} \, \dot{e}_j
		}
		\;
		U^i\otimes U^i
		+\frac{\partial\breve{\WW}}{\partial e_i}
		\partonbb{
			\dot{U}^i\otimes U^i+ U^i\otimes\dot{U}^i
		}
	}.
\end{align}
\end{cor}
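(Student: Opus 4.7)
The plan is to establish \eqref{eq:stress_Kirchh_dec} by combining the definition $S_2 = \pD_E \WW(E)$ with the spectral decomposition given by Proposition \ref{prop1}, and then to obtain \eqref{eq:der stress_Kirchh_dec} by a direct differentiation in time.

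First I would write $\WW(E) = \breve\WW(e_1(E), e_2(E), e_3(E))$ on an open set where the eigenvalues are simple (so that the eigenvalues $e_i$ and the unit eigenvectors $U^i$ are smooth functions of $E \in \Sym(3)$). The chain rule then yields
\begin{equation*}
    S_2 = \pD_E \WW(E) = \sum_{i=1}^3 \frac{\partial \breve\WW}{\partial e_i}(e) \; \pD_E e_i(E).
\end{equation*}
The key ingredient is the classical formula for the differential of a simple eigenvalue of a symmetric matrix, $\pD_E e_i(E) = U^i \otimes U^i$, which follows from differentiating the characteristic relation $E \, U^i = e_i \, U^i$, taking the inner product with $U^i$, and exploiting symmetry together with $\norm{U^i}=1$. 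Substituting this identity into the chain rule yields \eqref{eq:stress_Kirchh_dec}. The case of coincident eigenvalues is handled by continuity, since $S_2$ and the right-hand side are both continuous in $E$ and the set where the spectrum is simple is dense in $\Sym(3)$.

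For \eqref{eq:der stress_Kirchh_dec} I would simply differentiate \eqref{eq:stress_Kirchh_dec} in $t$, treating $e_i(x,t)$ and $U^i(x,t)$ as smooth time-dependent fields (again on the set where eigenvalues are simple). Applying the product rule to each summand and using the chain rule on $\partial_{e_i}\breve\WW(e)$ gives
\begin{equation*}
    \frac{\dif}{\dif t}\parqB{\frac{\partial\breve\WW}{\partial e_i}(e)} = \sum_{j=1}^3 \frac{\partial^2 \breve\WW}{\partial e_i \partial e_j}(e) \, \dot e_j,
    \qquad
    \frac{\dif}{\dif t}\parqB{U^i \otimes U^i} = \dot U^i \otimes U^i + U^i \otimes \dot U^i,
\end{equation*}
which assembled together produce exactly \eqref{eq:der stress_Kirchh_dec}.

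The only delicate point is the smooth dependence of $e_i$ and $U^i$ on $E$ (and hence on $t$) when eigenvalues coalesce; I would briefly remark that the derivative formula $\pD_E e_i = U^i \otimes U^i$ is valid wherever $e_i$ is simple, and that both sides of the corollary's identities depend continuously on $E$, allowing the formulas to be extended to the degenerate case by density. No isotropy beyond that already used in Proposition \ref{prop1} is needed for this step; the formula is essentially a restatement of the spectral theorem for $\pD_E \WW$ coupled with the standard eigenvalue-derivative identity.
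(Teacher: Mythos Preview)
Your proposal is correct and follows essentially the same route as the paper: both obtain \eqref{eq:stress_Kirchh_dec} by differentiating the identity $\WW(E)=\breve\WW(e(E))$ from Proposition~\ref{prop1}, and then get \eqref{eq:der stress_Kirchh_dec} by straightforward time differentiation. Your presentation is slightly more explicit---you invoke the eigenvalue-derivative formula $\pD_E e_i = U^i\otimes U^i$ directly and address the coalescing-eigenvalue case by density, whereas the paper phrases the same computation as component-wise differentiation of $\breve\WW$ viewed as a function of the matrix entries $e_{ij}$ in the eigenbasis---but the underlying argument is the same.
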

\begin{proof}
This is a consequence of the fact that the auxiliary energy density $\breve \WW(e)$ can be thought as a function of the diagonal matrix $\widehat D(E) = \sum_i e_i \, U^i \otimes U^i$ given by the spectral theorem, i.e.~$\breve \WW(e) = \breve \WW(\widehat D(e))$. Therefore, setting $e_{ij}=0$ for $i\neq j$, we can introduce the function 
\begin{align}
\breve \WW(e) = \breve \WW\bigg(\sum_{i,j}e_{ij} \, U^i\otimes U^j\bigg) \qquad \text{and} \qquad \DD_e \breve \WW\bigg(\sum_{i,j}e_{ij} \, U^i\otimes U^j\bigg) &= \grafB{\frac{\partial}{\partial e_{hk}} \breve \WW\bigg(\sum_{i,j}e_{ij} \, U^i\otimes U^j\bigg)}_{h,k}. \qedhere
\end{align}
\end{proof}
\noindent In order to show that the positive definiteness of $Q_\hyp$ is equivalent to the positive definiteness of $\sym\pD_{\log\lambda}\widehat\sigma$, we first demonstrate that $Q_\hyp$ can be split into two quadratic forms, $Q_{\hyp(1)}$ and $Q_{\hyp(2)}$ respectively, such that 
\begin{align}
   Q_\hyp(\dot E)
   =
   Q_{\hyp(1)}(\dot E_{11},\dot E_{22}, \dot E_{33})
   +
   \,
   Q_{\hyp(2)}(\dot E_{12},\dot E_{23}, \dot E_{31}).
\end{align}
In other words, the quadratic form $Q_\hyp$ can be expressed as a bilinear form over $\bR^6$ by setting 
	\begin{equation}
		\widehat{Q}_\hyp(\dot E,\dot E)
		=
		\left\langle\begin{pmatrix}
				\dot E_{11} \\ \dot E_{22} \\ \dot E_{33} \\ \dot E_{23} \\ \dot E_{13} \\ \dot E_{12}
			\end{pmatrix} \!,\!
			\left(\begin{array}{@{}c@{}c|c@{}c@{}}
				&\phantom{0} &&
				\\
				& \widehat Q_{\hyp (1)} &0 &
				\\
				&\phantom{0} &&
				\\
				\hline
				&\phantom{0} &&
				\\
				&0 & \widehat Q_{\hyp (2)}&
				\\
				&\phantom{0} &&
			\end{array}\right)
			\!.\!
			\begin{pmatrix}
				\dot E_{11} \\ \dot E_{22} \\ \dot E_{33} \\ \dot E_{23} \\ \dot E_{13} \\ \dot E_{12}
			\end{pmatrix}\right\rangle,
	\end{equation}
with matrices $\widehat Q_{\hyp(1)}$ and $\widehat Q_{\hyp(2)}$ that will be defined later, so that
	\begin{equation}
		\widehat Q_\hyp(\dot E,\dot E)
		=
		\scal{\begin{pmatrix}
				\dot E_{11} \\ \dot E_{22} \\ \dot E_{33} 
		\end{pmatrix} \!\!}{ \widehat Q_{\hyp(1)} .\!\! \begin{pmatrix}
				\dot E_{11} \\ \dot E_{22} \\ \dot E_{33} 
		\end{pmatrix}}_{\mathclap{\bR^{3}}}
		+
		\scal{\begin{pmatrix}
				\dot E_{23} \\ \dot E_{13} \\ \dot E_{12}
		\end{pmatrix} \!\!}{ \widehat Q_{\hyp(2)} .\!\! \begin{pmatrix}
				\dot E_{23} \\ \dot E_{13} \\ \dot E_{12}
		\end{pmatrix}}_{\mathclap{\bR^{3}}}.
	\end{equation}
This separation allows us to establish the positive definiteness of  $Q_\hyp$ by separately studying the positive definiteness of $Q_{\hyp(1)}$ and $Q_{\hyp(2)}$. In the next step, we need to develop separately the three terms
$ 
\langle \dot S_2(E) , \dot{E} \rangle 
$, 
$
2 \, \,\textrm{tr}\,\big(C^{-1}\,\dot E\,S_2\,\dot E\big)
$ and
$
\langle C^{-1} , \dot{E} \rangle \, \langle S_2(E) , \dot{E} \rangle
$
comprising $Q_\hyp(\dot E)$. At this point it is important to remark that, in general, the vectors $\{U^i\}_{i=1}^3$ are not the eigenvectors of the eigenvalues of $\dot E$ and hence $\dot E$ is not diagonal in this basis. Thus, we begin our considerations with the following
\begin{lem}\label{lem3}
Let $\{U^i\}_{i=1}^3$ be the eigenvectors of $E$ with eigenvalues $\{e_i(x,t)\}_{i=1}^3$. Then the tensor $\dot E$ admits the representation
\begin{align}
		\dot E
		=
		\sum_{i=1}^3\partonbb{
			\dot e_i\,U^i\otimes U^i
			+
			e_i\,\dot U^i\otimes U^i
			+
			e_i\,U^i\otimes \dot U^i
		}
	    =
	    \sum_{j,k=1}^3 \dot E_{jk} \, U^{j}\otimes U^{k}, 
	\end{align}
with
\begin{align}
\label{eq.3}
\dot E_{jk} = \left\{
	\begin{array}{lr}
		\dot e_j &\qquad \text{if} \quad j = k,  \\
		(e_j-e_k) \, \scalb{\dot U^j}{U^k} & \qquad \text{if} \quad j \neq k.
	\end{array} \right.
\end{align}
\end{lem}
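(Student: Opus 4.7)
My plan is to start from the spectral decomposition $E(x,t) = \sum_{i=1}^3 e_i(x,t)\, U^i(x,t)\otimes U^i(x,t)$, which is the content of the spectral theorem since $E\in\Sym(3)$ and the $\{U^i\}$ form an orthonormal eigenbasis. Differentiating this equality with respect to $t$ and applying the product rule to each summand immediately yields the first displayed formula
\begin{align*}
\dot E = \sum_{i=1}^3\partonbb{\dot e_i\,U^i\otimes U^i + e_i\,\dot U^i\otimes U^i + e_i\,U^i\otimes \dot U^i},
\end{align*}
which is a tensor in $\Sym(3)$ (the symmetry being manifest after pairing the last two terms).

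The second formula merely rewrites $\dot E$ in the basis $\{U^j\otimes U^k\}_{j,k=1}^3$ of $\R^{3\times 3}$. Since $\{U^i\}$ is orthonormal, the components are given by $\dot E_{jk}=\scal{U^j}{\dot E\, U^k}$. Plugging in the spectral expression for $\dot E$ derived above and using $\scalb{U^i}{U^\ell}=\delta_{i\ell}$, I get
\begin{align*}
\dot E_{jk} = \sum_{i=1}^3\partonbb{\dot e_i\,\delta_{ji}\delta_{ik} + e_i\,\scal{U^j}{\dot U^i}\delta_{ik} + e_i\,\delta_{ji}\scal{\dot U^i}{U^k}} = \dot e_j\,\delta_{jk} + e_k\,\scal{U^j}{\dot U^k} + e_j\,\scal{\dot U^j}{U^k}.
\end{align*}

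The final step is to simplify the two cases. Differentiating the constraint $\scalb{U^j}{U^k}=\delta_{jk}$ in $t$ gives
\begin{align*}
\scal{\dot U^j}{U^k} + \scal{U^j}{\dot U^k} = 0 \qquad \forall\,j,k\in\{1,2,3\}.
\end{align*}
For $j=k$ this yields $\scalb{\dot U^j}{U^j}=0$, so the two mixed terms in $\dot E_{jj}$ vanish and $\dot E_{jj}=\dot e_j$. For $j\neq k$ the same identity gives $\scalb{U^j}{\dot U^k}=-\scalb{\dot U^j}{U^k}$, whence
\begin{align*}
\dot E_{jk} = -e_k\,\scal{\dot U^j}{U^k} + e_j\,\scal{\dot U^j}{U^k} = (e_j-e_k)\,\scal{\dot U^j}{U^k},
\end{align*}
which is exactly \eqref{eq.3}. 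There is no real obstacle here; the statement is an elementary bookkeeping consequence of the spectral decomposition and the orthonormality of the eigenframe, and the only point that requires attention is the careful use of the differentiated orthonormality relation to collapse the two terms $e_k\scalb{U^j}{\dot U^k}+e_j\scalb{\dot U^j}{U^k}$ into the antisymmetric combination $(e_j-e_k)\scalb{\dot U^j}{U^k}$.
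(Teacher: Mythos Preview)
Your proof is correct and follows essentially the same approach as the paper: differentiate the spectral decomposition, compute the components $\dot E_{jk}$ in the orthonormal eigenbasis, and use the differentiated orthonormality relation $\scalb{\dot U^j}{U^k}+\scalb{U^j}{\dot U^k}=0$ to collapse the mixed terms. The only cosmetic difference is that the paper writes $\dot E_{jk}=\scalb{\dot E.U^j}{U^k}$ while you write $\scalb{U^j}{\dot E\,U^k}$, which coincide by the symmetry of $\dot E$.
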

\begin{proof}
We begin the calculation of the components $\{\dot E_{jk}\}_{j,k=1}^3$ of $\dot E$ in the basis $\{U^i\}_{i=1}^3$ by differentiating
	\begin{align}
		E 
		= 
		\sum_i e_i \, U^i \otimes U^i \quad \implies \quad \dot{E} 
		= 
		\sum_i(\dot{e}_i \, U^i \otimes U^i + e_i \, \dot{U}^i \otimes U^i + e_i \, U^i \otimes \dot{U}^i).
	\end{align}
The component $\dot{E}_{jk} = \langle \dot E.U^j,U^k\rangle$ of $\dot E$ can be consequently expressed as 
	\begin{align}
		\dot  E_{jk}&=\scal{\dot  E.U^j}{U^k}
		=
		\scal{\parqbb{\sum_{i=1}^3\partonbb{
					\dot e_i\,U^i\otimes U^i
					+
					e_i\,\dot U^i\otimes U^i
					+
					e_i\,U^i\otimes \dot U^i
			}}\,U^j}{U^k}
		\nonumber
		\\
		&
		=
		\scalB{\sum_{i=1}^3\partonB{\dot e_i\,U^i\otimes U^i}U^j}{U^k}
		+
		\scalB{\sum_{i=1}^3\partonB{e_i\,\dot U^i\otimes U^i}U^j}{U^k}
		+
		\scalB{\sum_{i=1}^3\partonB{e_i\,U^i\otimes \dot U^i}U^j}{U^k}
		\nonumber
		\\
		&
		=
		\scalB{\sum_{i=1}^3\underbrace{\scalb{U^i}{U^j}}_{\delta_{ij}}\dot e_i\,U^i}{U^k}
		+
		\scalB{\sum_{i=1}^3\underbrace{\scalb{U^i}{U^j}}_{\delta_{ij}}e_i\,\dot U^i}{U^k}
		+
		\scalB{\sum_{i=1}^3\scal{\dot U_i}{U_j} e_i\, U^i}{U^k}
		\\
		&=
		\scalb{\dot e_j\,U^j}{U^k}
		+
		\scalb{e_j\,\dot U^j}{U^k}
		+
		\scalB{\sum_{i=1}^3\scalb{\dot U^i}{U^j} \, e_i \, U^i}{U^k}
		\nonumber
		\\
		&
		=
		\dot e_j\,\underbrace{\scalb{U^j}{U^k}}_{=\,\delta_{jk}}
		+
		e_j\scalb{\dot U^j}{U^k}
		+
		\sum_{i=1}^3\scalb{\dot U^i}{U^j} e_i \, \delta_{ik}
		=
		\dot e_j\,\delta_{jk}
		+
		e_j\scalb{\dot U^j}{U^k}
		+
		e_k\scalb{\dot U^k}{U^j}.
		\nonumber
	\end{align}
From the orthonomality $\scal{U^k}{U^j} = \delta_{kj}$ given by the spectral theorem we see
	\begin{align}
	0
	=
	\frac{\dif}{\dif t}
	\delta_{kj}
	=
	\frac{\dif}{\dif t}\scalb{U^k}{U^j}
	=
	\scalb{\dot U^k}{U^j}
	+
	\scalb{ U^k}{\dot U^j}
	\qquad
	\Longrightarrow
	\qquad
	\scalb{\dot U^k}{U^j}
	=
	-
	\scalb{ U^k}{\dot U^j}
	\end{align}
	and obtain
	\begin{equation}\label{eq:eigen_diff}
		\dot E_{jk}
		=
		\dot e_j\,\delta_{jk}
		+
		e_j\scalb{\dot U^j}{U^k}
		+
		e_k\scalb{\dot U^k}{U^j}
		=
		\dot e_j\,\delta_{jk}
		+
		(e_j-e_k)\scalb{\dot U^j}{U^k}
		\qquad
		\forall \, 
		i,j\in\{1,2,3\}.
	\end{equation}
	In particular 
	\begin{align}
		\dot E_{jj}
		=
		\dot e_j
		\qquad
		\forall \,
		j\in\{1,2,3\}
		\qquad \text{and} \qquad
		\dot E_{jk}
		&=
		(e_j-e_k) \, \scalb{\dot U^j}{U^k}
		\qquad
		\forall \,
		j\neq k. \qedhere
	\end{align}
\end{proof}
\begin{cor}
The orthogonal decomposition of $\dot U^j$ in the orthonormal basis $\{U^k\}_{k=1}^3$ is given by
	\begin{equation}\label{eq.2}
		\dot U^j
		=
		\sum_{k\neq j}\frac{\dot E_{jk}}{e_j-e_k}U^k, \qquad \text{for} \qquad e_j \neq e_k,
	\end{equation}
where $\dot E_{jk} = 0$, if $e_j = e_k$, as can be seen from \eqref{eq:eigen_diff}.
\end{cor}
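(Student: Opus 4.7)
The plan is to exploit the fact that $\{U^k\}_{k=1}^3$ is an orthonormal basis, so any vector in $\bR^3$, and in particular $\dot U^j$, admits the orthogonal decomposition
\begin{align*}
\dot U^j = \sum_{k=1}^3 \scalb{\dot U^j}{U^k} \, U^k .
\end{align*}
From here the result will follow by separately identifying the diagonal coefficient ($k = j$) and the off-diagonal coefficients ($k \neq j$), using the unit-norm constraint on the eigenvectors for the former and the relation \eqref{eq.3} from Lemma \ref{lem3} for the latter.

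First, I would treat the diagonal term. Differentiating $\scal{U^j}{U^j} = 1$ with respect to time immediately yields $2\,\scalb{\dot U^j}{U^j} = 0$, so the $k=j$ contribution to the orthogonal expansion vanishes identically. Thus
\begin{align*}
\dot U^j = \sum_{k \neq j} \scalb{\dot U^j}{U^k} \, U^k .
\end{align*}

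Next, for each pair $k \neq j$ with $e_j \neq e_k$, I would invoke Lemma \ref{lem3}, specifically the second line of \eqref{eq.3}, which states $\dot E_{jk} = (e_j - e_k) \, \scalb{\dot U^j}{U^k}$. Dividing by the nonzero factor $e_j - e_k$ gives the desired identity
\begin{align*}
\scalb{\dot U^j}{U^k} = \frac{\dot E_{jk}}{e_j - e_k} ,
\end{align*}
and substituting back into the expansion yields \eqref{eq.2}.

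Finally, I would address the degenerate case $e_j = e_k$: here \eqref{eq.3} directly forces $\dot E_{jk} = 0$, which is consistent with the convention that such terms are simply absent from the sum in \eqref{eq.2}. There is no real obstacle in this proof, since it is essentially a rewriting of Lemma \ref{lem3}; the only subtlety worth emphasizing is that the coefficients $\scalb{\dot U^j}{U^k}$ for $e_j = e_k$ are not determined by \eqref{eq.3} alone (the system is underdetermined in eigenspaces of multiplicity $> 1$), but they are annihilated by the factor $(e_j - e_k)$ so that $\dot E_{jk} = 0$ regardless, which is precisely what the statement records.
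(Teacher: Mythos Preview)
Your proof is correct and follows essentially the same approach as the paper: expand $\dot U^j$ in the orthonormal basis $\{U^k\}$, eliminate the $k=j$ term via $\frac{\dif}{\dif t}\norm{U^j}^2=0$, and read off the remaining coefficients from the relation $\dot E_{jk}=(e_j-e_k)\scalb{\dot U^j}{U^k}$ of Lemma~\ref{lem3}. Your additional remark on the indeterminacy of $\scalb{\dot U^j}{U^k}$ when $e_j=e_k$ is a welcome clarification that the paper leaves implicit.
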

\begin{proof}
From the representation
	\begin{equation}\label{eq:1}
		\dot E_{jk}
		=
		(e_j-e_k) \, \scalb{\dot U^j}{U^k}
		\qquad
		\Longleftrightarrow
		\qquad
		\scalb{\dot U^j}{U^k}
		=
		\frac{\dot E_{jk}}{e_j-e_k}
		\qquad
		\forall 
		j\neq k
	\end{equation}
we easily infer 
	\begin{equation}\label{eq.2}
		\dot U^j
		=
		\sum_{k=1}^3
		\scalb{\dot U^j}{U^k} \, U^k
		=
		\sum_{k\neq j}\scalb{\dot U^j}{U^k}U^k
		=
		\sum_{k\neq j}\frac{\dot E_{jk}}{e_j-e_k}U^k,
	\end{equation}
where $\langle \dot U^j, U^j \rangle = 0$ since $\norm{U^j}^2 = 1$ and thus $\frac{\dif}{\dif t} \norm{U^j}^2 = 2 \, \langle \dot U^j, U^j \rangle = 0$.
\end{proof}
\noindent Next we develop the quadratic form $Q_{\hyp}$.
\begin{prop} \label{prop5}
We have
	\begin{equation}
	\label{eq.all1}
	\begin{alignedat}{2}
	\scalb{\dot S_2}{\dot E}
	&=
	\sum_{i,j=1}^3
	\frac{\partial^2\breve{\WW}}{\partial e_i\partial e_j} \, \dot{E}_{jj}\, \dot{E}_{ii}
	+
	\sum_{i\neq j}
	\frac{\frac{\partial\breve{\WW}}{\partial e_i}-\frac{\partial\breve{\WW}}{\partial e_j}}{e_i-e_j}\dot{E}_{ij}^2,
	\\
	     \scalb{C^{-1}}{\dot E}\scalb{S_2}{\dot E}
	     &=
	     \sum_{i=1}^3\frac{1}{\lambda_i^2}\frac{\partial \breve{\WW}}{\partial e_i}\dot E_{ii}^2
	     +
	     \frac{1}{2}
	     \sum_{i\neq j}
	     \partonbb{
	     	\frac{1}{\lambda_i^2}
	     	\frac{\partial \breve{\WW}}{\partial e_j}
	     	+\frac{1}{\lambda_j^2}
	     	\frac{\partial \breve{\WW}}{\partial e_i}
	     }
	     \dot E_{ii}
	     \dot{E}_{jj},
	\\
	    \tr\partonb{C^{-1}\,\dot E\,S_2\,\dot E}
	    &=
	    \sum_{i=1}^3\frac{1}{\lambda_i^2}\dot E_{ii}^2\frac{\partial \breve{\WW}}{\partial e_i}
	    +
	    \frac{1}{2}
	    \sum_{i\neq j}\partonbb{\frac{1}{\lambda_i^2}\frac{\partial \breve{\WW}}{\partial e_j}+\frac{1}{\lambda_j^2}\frac{\partial \breve{\WW}}{\partial e_i}} \dot E_{ij}^2.
	\end{alignedat}
	\end{equation}
\end{prop}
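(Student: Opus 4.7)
The plan is to compute each of the three bilinear forms by representing every tensor involved in the orthonormal eigenbasis $\{U^i\}_{i=1}^3$ of $E$. In this basis the tensors $E$, $C = 2E + \id$, $C^{-1}$ and $S_2$ are simultaneously diagonal with entries $e_i$, $\lambda_i^2 = 2e_i + 1$, $1/\lambda_i^2$, and $s_i := \partial\breve{\WW}/\partial e_i$ respectively, whereas $\dot E$ has full components $\dot E_{jk}$ as given by Lemma \ref{lem3}. Since all three right-hand sides are separated into diagonal ($\dot E_{ii}$) and off-diagonal ($\dot E_{ij}$, $i\neq j$) contributions, each identity will be obtained after two bookkeeping steps: expansion in the basis, followed by a symmetric relabeling of the indices in the off-diagonal sums.

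For the second identity I would exploit the diagonality of both $C^{-1}$ and $S_2$ in the basis $\{U^i\}$, which immediately gives $\scalb{C^{-1}}{\dot E} = \sum_i \dot E_{ii}/\lambda_i^2$ and $\scalb{S_2}{\dot E} = \sum_i s_i \, \dot E_{ii}$. Their product is $\sum_{i,j} (s_j/\lambda_i^2)\,\dot E_{ii}\,\dot E_{jj}$; splitting $i=j$ from $i\neq j$ and symmetrizing the off-diagonal part (averaging the $(i,j)$ and $(j,i)$ terms) yields the announced formula. For the third identity I would compute $C^{-1}\dot E\,S_2\,\dot E$ directly in the basis: $(U^i\otimes U^i)(\dot E_{jk} U^j\otimes U^k) = \delta_{ij}\,\dot E_{ik}\,U^i\otimes U^k$, and iterating gives
\begin{equation*}
C^{-1}\,\dot E\,S_2\,\dot E = \sum_{i,k,m} \frac{s_k\,\dot E_{ik}\,\dot E_{km}}{\lambda_i^2}\,U^i\otimes U^m.
\end{equation*}
Taking the trace and using the symmetry $\dot E_{ki} = \dot E_{ik}$, then splitting again into diagonal and off-diagonal parts and symmetrizing over $(i,k)\leftrightarrow(k,i)$ delivers the claim.

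The main obstacle is the first identity, because $\dot S_2$ contains the time-derivatives $\dot U^i$ of the eigenframe, which are \emph{not} in general orthogonal to themselves and must be expressed in the basis $\{U^k\}$ via the relation $\dot U^i = \sum_{k\neq i} \frac{\dot E_{ik}}{e_i - e_k} U^k$ from Corollary 2. Substituting Corollary \ref{cor2}'s expression for $\dot S_2$ into $\scalb{\dot S_2}{\dot E}$ and using the orthonormality $\scalb{U^i}{U^j}=\delta_{ij}$, the diagonal block produces $\sum_{i,j} \frac{\partial^2 \breve\WW}{\partial e_i\,\partial e_j}\,\dot E_{ii}\,\dot E_{jj}$ directly (recalling $\dot e_j = \dot E_{jj}$ from Lemma \ref{lem3}). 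The cross terms give $\sum_i s_i\,\bigl(\scalb{\dot U^i\otimes U^i}{\dot E} + \scalb{U^i\otimes\dot U^i}{\dot E}\bigr) = 2\sum_{i\neq k} \frac{s_i}{e_i-e_k}\,\dot E_{ik}^2$, where the factor of $2$ comes from the symmetry of $\dot E$ together with $\dot E_{ki} = \dot E_{ik}$.

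To finish, I would symmetrize this off-diagonal sum via the index swap $i\leftrightarrow k$, using the antisymmetry $1/(e_i-e_k) = -1/(e_k-e_i)$:
\begin{equation*}
2\sum_{i\neq k} \frac{s_i}{e_i-e_k}\,\dot E_{ik}^2 = \sum_{i\neq k}\left(\frac{s_i}{e_i-e_k} + \frac{s_k}{e_k-e_i}\right)\dot E_{ik}^2 = \sum_{i\neq k}\frac{s_i - s_k}{e_i-e_k}\,\dot E_{ik}^2,
\end{equation*}
which is precisely the off-diagonal contribution in \eqref{eq.all1}. The apparent singularity when $e_i = e_k$ is harmless, since in that case Lemma \ref{lem3} forces $\dot E_{ik} = (e_i - e_k)\scalb{\dot U^i}{U^k} = 0$, and the ratio $(s_i - s_k)/(e_i - e_k)$ admits a finite limit by smoothness of $\breve\WW$.
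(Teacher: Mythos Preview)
Your proposal is correct and follows essentially the same approach as the paper: expand everything in the eigenbasis $\{U^i\}$, use the diagonality of $C^{-1}$ and $S_2$ together with Lemma~\ref{lem3} and the relation $\dot U^i=\sum_{k\neq i}\frac{\dot E_{ik}}{e_i-e_k}U^k$, and then symmetrize the off-diagonal sums. The only cosmetic difference is that the paper first rewrites $\dot S_2$ entirely in the basis $\{U^i\otimes U^j\}$ (their formula~\eqref{eq:Spunto}) before pairing with $\dot E$, whereas you pair first and substitute $\dot U^i$ afterwards; the computations are otherwise identical.
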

\begin{proof}
Starting with $\eqref{eq.all1}_1$,, we can use \eqref{eq.2} and \eqref{eq.3} in the expression of $\dot S_2$ from Corollary \ref{cor2}, to obtain
	\begin{align}
		\dot{S}_2
		&
		=
		\sum_{i=1}^3
		\parqbb{
			\partonbb{
				\sum_{j=1}^3
				\frac{\partial^2\breve{\WW}}{\partial e_i\partial e_j} \, \dot{e}_j
			}
			\;
			U^i\otimes U^i
			+\frac{\partial\breve{\WW}}{\partial e_i}
			\partonbb{
				\dot{U}^i\otimes U^i+ U^i\otimes\dot{U}^i
			}
		}
		\nonumber
		\\
		&
		=
		\sum_{i,j=1}^3
		\frac{\partial^2\breve{\WW}}{\partial e_i\partial e_j} \, \dot{E}_{jj}
		\;
		U^i\otimes U^i
		+
		\sum_{i\neq j}
		\frac{\frac{\partial\breve{\WW}}{\partial e_i}-\frac{\partial\breve{\WW}}{\partial e_j}}{e_i-e_j}
		\, \dot{E}_{ij}U^i\otimes U^j.
		\label{eq:Spunto}
	\end{align}
Calculating the scalar product on both sides of equation \eqref{eq:Spunto} with $\dot{E} = \sum_{h,k}\dot{E}_{hk} \, U^h \otimes U^k$ then yields
	\begin{align}
		\scalb{\dot S_2}{\dot E}
		&
		=
		\scal{\sum_{i,j=1}^3
			\frac{\partial^2\breve{\WW}}{\partial e_i\partial e_j} \, \dot{E}_{jj}
			\;
			U^i\otimes U^i
			+
			\sum_{i\neq j}
			\frac{\frac{\partial\breve{\WW}}{\partial e_i}-\frac{\partial\breve{\WW}}{\partial e_j}}{e_i-e_j}\dot{E}_{ij}U^i\otimes U^j
		}{\sum_{h,k}\dot E_{hk}U^h\otimes U^k}
		\nonumber
		\\
		&
		=
		\scal{\sum_{i,j=1}^3
			\frac{\partial^2\breve{\WW}}{\partial e_i\partial e_j} \, \dot{E}_{jj}
			\;
			U^i\otimes U^i
		}{\sum_{h,k}\dot E_{hk}U^h\otimes U^k}
		+
		\scal{
			\sum_{i\neq j}
			\frac{\frac{\partial\breve{\WW}}{\partial e_i}-\frac{\partial\breve{\WW}}{\partial e_j}}{e_i-e_j}\dot{E}_{ij}U^i\otimes U^j
		}{\sum_{h\neq k}\dot E_{hk}U^h\otimes U^k}
		\nonumber
		\\
		&
		=
		\sum_{i,j=1}^3
		\frac{\partial^2\breve{\WW}}{\partial e_i\partial e_j} \, \dot{E}_{jj}\, \dot{E}_{ii}
		+
		\sum_{i\neq j}
		\frac{\frac{\partial\breve{\WW}}{\partial e_i}-\frac{\partial\breve{\WW}}{\partial e_j}}{e_i-e_j}\dot{E}_{ij}^2.
	\end{align}
For $\eqref{eq.all1}_2$, we obtain by the representation of $S_2$ from Corollary \ref{cor2} and that of $\dot E$ from Lemma \ref{lem3},
	\begin{align}
		\scalb{S_2}{\dot E}
		&
		=
		\scal{\sum_{j=1}^3\frac{\partial \breve{\WW}}{\partial e_j}U^j\otimes U^j}{
			\sum_{j=1}^3\dot e_j U^j\otimes U^j
			+
			\sum_{j=1}^3 e_j \dot U^j\otimes U^j
			+
			\sum_{j=1}^3 e_j  U^j\otimes \dot U^j
		}
		\\
		&
		=
		\sum_{j=1}^3\frac{\partial \breve{\WW}}{\partial e_j}\underbrace{\dot e_j}_{=\,\dot E_{jj}}
		+
		\sum_{j=1}^3\frac{\partial \breve{\WW}}{\partial e_j}\underbrace{\normb{U^j}^2}_{=\,1}
		\underbrace{\scalb{\dot U^j}{U^j}}_{=\,0}
		+
		\sum_{j=1}^3\frac{\partial \breve{\WW}}{\partial e_j}\underbrace{\normb{U^j}^2}_{=\,1}
		\underbrace{\scalb{\dot U^j}{U^j}}_{=\,0}
		=
		\sum_{j=1}^3\frac{\partial \breve{\WW}}{\partial e_j}\dot{E}_{jj}.
		\nonumber
	\end{align}
To rewrite the term $\scalb{C^{-1}}{\dot E}$, let us first remark that
the eigenvalues of $C$ are given by $\{\lambda_j^2=1+2e_j\}_{j=1}^3$ because if $v\in\mathbb{R}^3$ satisfies $E\,v= e_j\,v$, then it also satisfies 
	\begin{equation}
		E\,v
		=
		\frac{1}{2} (C-\id) \, v
		= e_j \, v
		\qquad
		\Longleftrightarrow
		\qquad
		C\,v
		=
		(1+2\,e_j) \, v
		=
		\lambda_j^2 \, v.
	\end{equation}	
	Hence, from 
	\begin{align}
	C
	=
	\sum_{j=1}^3\lambda_j^2\,U^j\otimes U^j
	\qquad
	\Longleftrightarrow
	\qquad
	C^{-1}
	=
	\sum_{j=1}^3\frac{1}{\lambda_j^2}\,U^j\otimes U^j,
	\end{align}
	we obtain
	\begin{align}
		\scalb{C^{-1}}{\dot E}
		&
		=
		\scal{\sum_{j=1}^3\frac{1}{\lambda_j^2}\,U^j\otimes U^j}{
			\sum_{j=1}^3\dot e_j U^j\otimes U^j
			+
			\sum_{j=1}^3 e_j \dot U^j\otimes U^j
			+
			\sum_{j=1}^3 e_j  U^j\otimes \dot U^j
		}
		\nonumber
		\\
		&
		=
		\sum_{j=1}^3\frac{1}{\lambda_j^2}\dot e_j
		\overset{\textrm{by} \, \eqref{eq.3}}{=}
		\sum_{j=1}^3\frac{1}{\lambda_j^2}\dot E_{jj}.
	\end{align}
	Therefore
	\begin{align}
		\scalb{C^{-1}}{\dot E}\scalb{S_2}{\dot E}
		&
		=
		\parton{\sum_{i=1}^3\frac{1}{\lambda_i^2}\dot E_{ii}}
		\parton{\sum_{j=1}^3\frac{\partial \breve{\WW}}{\partial e_j}\dot{E}_{jj}}
		=
		\sum_{i=1}^3\frac{1}{\lambda_i^2}\frac{\partial \breve{\WW}}{\partial e_i}\dot E_{ii}^2
		+
		\sum_{i\neq j}
		\frac{1}{\lambda_i^2}
		\frac{\partial \breve{\WW}}{\partial e_j}
		\dot E_{ii}
		\dot{E}_{jj}
		\nonumber
		\\
		&
		=
		\sum_{i=1}^3\frac{1}{\lambda_i^2}\frac{\partial \breve{\WW}}{\partial e_i}\dot E_{ii}^2
		+
		\frac{1}{2}
		\sum_{i\neq j}
		\partonbb{
			\frac{1}{\lambda_i^2}
			\frac{\partial \breve{\WW}}{\partial e_j}
			+\frac{1}{\lambda_j^2}
			\frac{\partial \breve{\WW}}{\partial e_i}
		}
		\dot E_{ii}
		\dot{E}_{jj}.
	\end{align}
For the last term of \eqref{eq.all1}, we first observe that
   \begin{align}
        	S_2\,\dot E
        	&
        	=
        	\partonbb{\sum_{k=1}^3\frac{\partial \breve{\WW}}{\partial e_k}U^k\otimes U^k}
        	\,
        	\partonbb{
        		\sum_{i,j}\dot E_{ij}U^i\otimes U^j}
        	=
        	\sum_{i,j}\frac{\partial \breve{\WW}}{\partial e_i}\dot E_{ij}U^i\otimes U^j.
        \end{align}
This leads to
   \begin{align}
	    	\dot E \, S_2\,\dot E
	    	&
	    	=
	    	\dot E \, \partonb{ S_2\,\dot E }
	    	=
	    	\partonbb{
	    		\sum_{h,k}\dot E_{hk}U^h\otimes U^k}
	    	\partonbb{
	    		\sum_{i,j}\frac{\partial \breve{\WW}}{\partial e_i}\dot E_{ij}U^i\otimes U^j
	    	}
	    	\nonumber
	    	\\
	    	&
	    	=
	    	\sum_{h,k}\sum_{i,j}\dot E_{hk}\frac{\partial \breve{\WW}}{\partial e_i}\dot E_{ij}\delta_{ki}U^h\otimes U^j
	    	=
	    	\sum_{h,j}\partonbb{\sum_{i}\dot E_{hi}\frac{\partial \breve{\WW}}{\partial e_i}\dot E_{ij}}U^h\otimes U^j,
	    \end{align}
which finally results in
	    \begin{align}
	    	C^{-1}\dot E \, S_2\,\dot E
	    	&
	    	=
	    	C^{-1} \, \partonb{\dot E \, S_2 \, \dot E }
	    	=
	    	\partonbb{
	    		\sum_{k}\frac{1}{1+2e_k}U^k\otimes U^k}
	    	\partonbb{
	    		\sum_{h,j}\partonbb{\sum_{i}\dot E_{hi}\frac{\partial \breve{\WW}}{\partial e_i}\dot E_{ij}}U^h\otimes U^j
	    	}
	    	\nonumber
	    	\\
	    	&
	    	=
	    	\sum_{k,j}\frac{1}{1+2e_k}\partonbb{\sum_{i}\dot E_{ki}\frac{\partial \breve{\WW}}{\partial e_i}\dot E_{ij}}U^k\otimes U^j.
	    \end{align}
Thus, calculating the trace, we obtain
	    \begin{align}
	    	\tr\partonb{C^{-1}\dot E \, S_2\,\dot E}
	    	&
	    	=
	    	\tr\partonbb{
	    		\sum_{k,j}\frac{1}{1+2e_k}\partonbb{\sum_{i}\dot E_{ki}\frac{\partial \breve{\WW}}{\partial e_i}\dot E_{ij}}U^k\otimes U^j}
	    	=
	    	\sum_{i,j}\frac{1}{\lambda_j^2}\dot E_{ji}\frac{\partial \breve{\WW}}{\partial e_i}\dot E_{ij}
	    	\\
	    	&
	    	=
	    	\sum_{i}\frac{1}{\lambda_i^2}\dot E_{ii}^2\frac{\partial \breve{\WW}}{\partial e_i}
	    	+
	    	\sum_{i\neq j}\frac{1}{\lambda_i^2}\frac{\partial \breve{\WW}}{\partial e_j} \dot E_{ij}^2
	    	=
	    	\sum_{i}\frac{1}{\lambda_i^2}\dot E_{ii}^2\frac{\partial \breve{\WW}}{\partial e_i}
	    	+
	    	\frac{1}{2}
	    	\sum_{i\neq j}\partonbb{\frac{1}{\lambda_i^2}\frac{\partial \breve{\WW}}{\partial e_j}+\frac{1}{\lambda_j^2}\frac{\partial \breve{\WW}}{\partial e_i}} \dot E_{ij}^2.
	    	\qedhere
	    \end{align}
\end{proof}
Now Proposition \ref{prop5} allows us to derive explicitly the quadratic form $Q_{\hyp}(\dot E)$ by reordering the terms and regrouping them according to the groups of variables $(\dot E_{11},\dot E_{22}, \dot E_{33})$ and  $(\dot E_{12},\dot E_{23}, \dot E_{31})$. The quadratic form defined by \eqref{eqleblond01} hence becomes the sum of the two \textbf{independent} quadratic forms
\begin{equation}
\begin{alignedat}{2}
    Q_\hyp(\dot E)
    &=
	\langle \dot S_2(E) , \dot{E} \rangle + 2 \, \,\textrm{tr}\,\big(C^{-1}\,\dot E\,S_2\,\dot E\big) - \langle C^{-1} , \dot{E} \rangle \, \langle S_2(E) , \dot{E} \rangle \\
    &=
    \underbrace{Q_{\hyp(1)}(\dot E_{11},\dot E_{22}, \dot E_{33})}_{
    	\mathclap{\substack{
    	= \, Q_{\hyp(1)}(\dot e_{1},\dot e_{2}, \dot e_{3}) \\ \textrm{by} \; \eqref{eq.3}}
     }}
    +
    \;
    Q_{\hyp(2)}(\dot E_{12},\dot E_{23}, \dot E_{31}),
\end{alignedat}
\end{equation}
where
\begin{align}
	Q_{\hyp(1)}(\dot E_{11},\dot E_{22}, \dot E_{33})
	&
	=
	\sum_{i,j=1}^3 \frac{\partial^2 \breve{\WW}}{\partial e_i\partial e_j} \dot E_{ii} \dot E_{jj}
	+
	2
	\sum_{i=1}^3\frac{1}{\lambda_i^2}\frac{\partial \breve{\WW}}{\partial e_i}\dot E_{ii}^2
	-
	\sum_{i=1}^3\frac{1}{\lambda_i^2}\frac{\partial \breve{\WW}}{\partial e_i}\dot E_{ii}^2
	-
	\frac{1}{2}
	\sum_{i\neq j}
	\partonbb{
		\frac{1}{\lambda_i^2}
		\frac{\partial \breve{\WW}}{\partial e_j}
		+\frac{1}{\lambda_j^2}
		\frac{\partial \breve{\WW}}{\partial e_i}
	}
	\dot E_{ii}
	\dot{E}_{jj} \nonumber
	\\
	\label{eqhypcompare1}
	&=
	\sum_{i=1}^3\partonbb{
		\frac{\partial^2 \breve{\WW}}{\partial e_i^2}+\frac{1}{\lambda_i^2}\frac{\partial \breve{\WW}}{\partial e_i}
	}\dot E_{ii}^2
	+
	\sum_{i\neq j}
	\partonbb{
		\frac{\partial^2 \breve{\WW}}{\partial e_i\partial e_j}
		-
		\frac{1}{2}
		\partonbb{
			\frac{1}{\lambda_i^2}
			\frac{\partial \breve{\WW}}{\partial e_j}
			+\frac{1}{\lambda_j^2}
			\frac{\partial \breve{\WW}}{\partial e_i}
		}
	}
	\dot E_{ii}
	\dot{E}_{jj}
\end{align}
and 
\begin{align}
\label{eqhypcompare2}
	Q_{\hyp(2)}(\dot E_{12},\dot E_{23}, \dot E_{31})
	= 
	\sum_{i\neq j}
	\partonbb{
		\frac{\frac{\partial\breve{\WW}}{\partial e_i}-\frac{\partial\breve{\WW}}{\partial e_j}}{e_i-e_j}
		+
		\frac{1}{\lambda_i^2}\frac{\partial \breve{\WW}}{\partial e_j}+\frac{1}{\lambda_j^2}\frac{\partial \breve{\WW}}{\partial e_i}
	}
	\dot{E}_{ij}^2.
\end{align}

\begin{rem}
As indicated above, the quadratic form $Q_\hyp$ can be expressed as a bilinear form over $\bR^6$ by setting
\begin{equation}
		\widehat{Q}_\hyp(\dot E,\dot E)
		=
		\left\langle \begin{pmatrix}
				\dot E_{11} \\ \dot E_{22} \\ \dot E_{33} \\ \dot E_{23} \\ \dot E_{13} \\ \dot E_{12}
			\end{pmatrix} \!,\!
			\left(\begin{array}{@{}c@{}c|c@{}c@{}}
				&\phantom{0} &&
				\\
				& \widehat Q_{\hyp (1)} &0 &
				\\
				&\phantom{0} &&
				\\
				\hline
				&\phantom{0} &&
				\\
				&0 & \widehat Q_{\hyp (2)}&
				\\
				&\phantom{0} &&
			\end{array}\right)
			\!.\!
			\begin{pmatrix}
				\dot E_{11} \\ \dot E_{22} \\ \dot E_{33} \\ \dot E_{23} \\ \dot E_{13} \\ \dot E_{12}
			\end{pmatrix}\right\rangle,
	\end{equation}
	i.e.
	\begin{equation}
		\widehat Q_\hyp(\dot E,\dot E)
		=
		\scal{\begin{pmatrix}
				\dot E_{11} \\ \dot E_{22} \\ \dot E_{33} 
		\end{pmatrix}\!\!}{ \widehat Q_{\hyp(1)} .\!\! \begin{pmatrix}
				\dot E_{11} \\ \dot E_{22} \\ \dot E_{33} 
		\end{pmatrix}}_{\mathclap{\bR^{3}}}
		\;\;
		+
		\;\;
		\scal{\begin{pmatrix}
				\dot E_{23} \\ \dot E_{13} \\ \dot E_{12}
		\end{pmatrix}\!\!}{ \widehat Q_{\hyp(2)} .\!\! \begin{pmatrix}
				\dot E_{23} \\ \dot E_{13} \\ \dot E_{12}
		\end{pmatrix}}_{\mathclap{\bR^{3}}},
	\end{equation}
	where
	\begin{equation*}
		\widehat Q_{\hyp (1)}
		=
		\begin{pmatrix}
			\frac{\partial^2 \breve{\WW}}{\partial e_1^2}+\frac{1}{\lambda_1^2}\frac{\partial \breve{\WW}}{\partial e_1}
			&
			\frac{\partial^2 \breve{\WW}}{\partial e_1\partial e_2}
			-
			\frac{1}{2}
			\partonB{
				\frac{1}{\lambda_1^2}
				\frac{\partial \breve{\WW}}{\partial e_2}
				+\frac{1}{\lambda_2^2}
				\frac{\partial \breve{\WW}}{\partial e_1}
			}
			&
			\frac{\partial^2 \breve{\WW}}{\partial e_1\partial e_3}
			-
			\frac{1}{2}
			\partonB{
				\frac{1}{\lambda_1^2}
				\frac{\partial \breve{\WW}}{\partial e_3}
				+\frac{1}{\lambda_3^2}
				\frac{\partial \breve{\WW}}{\partial e_1}
			}
			\\[3mm]
			\frac{\partial^2 \breve{\WW}}{\partial e_2\partial e_1}
			-
			\frac{1}{2}
			\partonB{
				\frac{1}{\lambda_1^2}
				\frac{\partial \breve{\WW}}{\partial e_2}
				+\frac{1}{\lambda_2^2}
				\frac{\partial \breve{\WW}}{\partial e_1}
			}
			&
			\frac{\partial^2 \breve{\WW}}{\partial e_2^2}
			+
			\frac{1}{\lambda_2^2}
			\frac{\partial \breve{\WW}}{\partial e_2}
			&
			\frac{\partial^2 \breve{\WW}}{\partial e_2\partial e_3}
			-
			\frac{1}{2}
			\partonB{
				\frac{1}{\lambda_2^2}
				\frac{\partial \breve{\WW}}{\partial e_3}
				+\frac{1}{\lambda_3^2}
				\frac{\partial \breve{\WW}}{\partial e_2}
			}
			\\[3mm]
			\frac{\partial^2 \breve{\WW}}{\partial e_3\partial e_1}
			-
			\frac{1}{2}
			\partonB{
				\frac{1}{\lambda_1^2}
				\frac{\partial \breve{\WW}}{\partial e_3}
				+\frac{1}{\lambda_3^2}
				\frac{\partial \breve{\WW}}{\partial e_1}
			}
			&
			\frac{\partial^2 \breve{\WW}}{\partial e_3\partial e_2}
			-
			\frac{1}{2}
			\partonB{
				\frac{1}{\lambda_2^2}
				\frac{\partial \breve{\WW}}{\partial e_3}
				+\frac{1}{\lambda_3^2}
				\frac{\partial \breve{\WW}}{\partial e_2}
			}
			&
			\frac{\partial^2 \breve{\WW}}{\partial e_3^2}+\frac{1}{\lambda_3^2}\frac{\partial \breve{\WW}}{\partial e_3}
		\end{pmatrix}
	\end{equation*}
	and
	\begin{equation}
		\widehat Q_{\hyp (2)}
		=
		\begin{pmatrix}
			\frac{\frac{\partial\breve{\WW}}{\partial e_2}-\frac{\partial\breve{\WW}}{\partial e_3}}{e_2-e_3}
			+
			\frac{1}{\lambda_2^2}\frac{\partial \breve{\WW}}{\partial e_3}+\frac{1}{\lambda_3^2}\frac{\partial \breve{\WW}}{\partial e_2}
			&
			0
			&
			0
			\\[2mm]
			0
			&
			\frac{\frac{\partial\breve{\WW}}{\partial e_1}-\frac{\partial\breve{\WW}}{\partial e_3}}{e_1-e_3}
			+
			\frac{1}{\lambda_1^2}\frac{\partial \breve{\WW}}{\partial e_3}+\frac{1}{\lambda_3^2}\frac{\partial \breve{\WW}}{\partial e_1}
			&
			0
			\\[2mm]
			0
			&
			0
			&
			\frac{\frac{\partial\breve{\WW}}{\partial e_1}-\frac{\partial\breve{\WW}}{\partial e_2}}{e_1-e_2}
			+
			\frac{1}{\lambda_1^2}\frac{\partial \breve{\WW}}{\partial e_2}+\frac{1}{\lambda_2^2}\frac{\partial \breve{\WW}}{\partial e_1}
		\end{pmatrix}.
	\end{equation}
\end{rem}
\subsection{Logarithmic energy and positive definiteness of the bilinear forms}
With the help of the foregoing development we now prove the equivalence
\begin{align}
     Q_{\hyp(1)} > 0
     \qquad \Longleftrightarrow \qquad
     \sym\pD_{\log\lambda}\widehat\sigma(\log \lambda)\in\Sym^{++}(3) \, .
\end{align}
In fact, as we will discuss in Section \ref{par:QQ}, the positive definiteness of $Q_{\hyp(2)}$ directly follows from the positive definiteness of $Q_{\hyp(1)}$, making the positive definiteness of $Q_{\hyp(1)}$ a sufficient condition for the positive definiteness of $Q_{\hyp}$, hence implying
\begin{align}
     Q_{\hyp} > 0
     \qquad \Longleftrightarrow \qquad
     \sym\pD_{\log\lambda}\widehat\sigma(\log \lambda)\in\Sym^{++}(3).
\end{align}
For further simplification of the quadratic forms $Q_{\hyp(1)}$ and $Q_{\hyp(2)}$ we note that by \eqref{eq.sigma}
\begin{align}
	\sigma 
	= 
	\frac{1}{J} \, F \, S_2 \, F^T 
	= 
	\frac{1}{J} \, F \, \left( \sum_i \frac{\partial \breve{\WW}(e)}{\partial e_i} \, U^i \otimes U^i \right) \, F^T. 
\end{align}
Computing the eigenvalues of $\sigma$ (i.e.~the principal Cauchy stresses), we see that the vectors $u^j\coloneqq F.U^j$ are eigenvectors for $\sigma$. Indeed
\begin{align}
	\sigma.u^j
	&
	=
	\frac{1}{J} \partonBB{ F \, \partonbb{ \sum_i \frac{\partial \breve{\WW}(e)}{\partial e_i} \, U^i \otimes U^i } \, F^T }.u^j
	=
	\frac{1}{J} \; F \, \partonbb{ \sum_i \frac{\partial \breve{\WW}(e)}{\partial e_i} \, U^i \otimes U^i } \, (F^T \, F).U^j
	\nonumber
	\\
	&
	\label{eqeigenvaluessigma}
	=
	\frac{1}{J} \; F . \partonbb{  \sum_i \frac{\partial \breve{\WW}(e)}{\partial e_i} \, U^i \scal{ U^i }{ (F^T \, F).U^j}}
	=
	\frac{1}{J} \; F . \partonbb{  \sum_i \frac{\partial \breve{\WW}(e)}{\partial e_i} \, U^i \, C_{ij}}
	\\
	&
	=
	\frac{1}{J} \; F . \partonbb{  \sum_i \frac{\partial \breve{\WW}(e)}{\partial e_i} \, U^i \lambda_{j}^2\delta_{ij}}
	=
	\frac{1}{J} \; F . \partonbb{  \frac{\partial \breve{\WW}(e)}{\partial e_j} \, U^j \lambda_{j}^2}
	=
	\frac{1}{J} \, \lambda_{j}^2 \,  \frac{\partial \breve{\WW}(e)}{\partial e_j} \, u^j, \nonumber
\end{align}
because 
$
	C
	=
	2E+\id
	=
	\sum_i\lambda_i^2 \, U^i\otimes U^i
$
and hence $\sigma$ decomposes as
$
	\sigma
	=
	\sum_i
	\frac{1}{J} \, \lambda_{i}^2 \,  \frac{\partial \breve \WW(e)}{\partial e_i} \, u^i\otimes u^i.
$
Thus, finally, the eigenvalues of $\sigma$ are given by 
\begin{equation}\label{eq:princ_stresses_sigmaa}
	\sigma_i
	=
	\frac{1}{J} \, \lambda_{i}^2 
	\,  
	\frac{\partial \breve \WW(e)}{\partial e_i} 
	\qquad
	\textrm{for}
	\qquad 
	i\in\{1,2,3\}. 
\end{equation}
Note that the eigenvectors $\{u^i\}_i$ are distinct since $F$ is invertible and the eigenvectors $\{U^i\}_i$ are distinct. Let us explicitly remark that they are functions of the eigenvalues of $C$, i.e.~$\sigma_i=\sigma_i(\lambda)$. Considering $e_i=\frac{1}{2}\partonb{\lambda_i^2-1}$, since $\lambda_i\mapsto\log\lambda_i$ is monotone increasing for every $i\in\{1,2,3\}$, we can introduce an auxiliary energy density in the positive principle stretches $\{\lambda_i\}_{i=1}^3$:
\begin{equation}\label{eq:lambda}
	\widehat \WW(\log \lambda)
	\coloneqq
	\breve{\WW}(e(\lambda))
	=
	\breve{\WW}\partonbb{\frac{1}{2}\partonb{\lambda^2-1}},
\end{equation}
where $\lambda$ and $\log \lambda$ stand for the vector-fields $(\lambda_1,\lambda_2,\lambda_3)$ and $\partonb{\log \lambda_1,\log \lambda_2,\log \lambda_3}$ respectively. Then the Cauchy stress $\sigma$ can be expressed as a function of $\log \lambda$, introducing the auxiliary stress $\widehat \sigma (\log \lambda)$ and setting by abuse of notation
\begin{align}
     \widehat\sigma(\log \lambda)
     \coloneqq
     \sigma(e(\lambda)).
\end{align}
\subsubsection{Analysis of $Q_{\hyp (1)}$}\label{par_equ}

In the following, we show that the positive definiteness of $Q_{\hyp (1)}$  is equivalent to the positive definiteness of $\sym \pD_{\log \lambda}\widehat\sigma(\log \lambda)$, i.e.
	\begin{align}
	Q_{\hyp (1)} > 0 \qquad \iff \qquad \sym \pD_{\log \lambda}\widehat\sigma(\log \lambda)\in\Sym^{++}(3).
	\end{align}
In a first step we remark that the entries of $\sym \, \DD_{\log \lambda} \widehat \sigma(\log \lambda)$ are given by
	\begin{align}
	\left( \sym \, \DD_{\log \lambda} \widehat \sigma(\log \lambda) \right)_{ij} = \left\{
		\begin{array}{ll}
		\frac{\partial \widehat \sigma_i}{\partial(\log \lambda_i)} & \qquad i = j , \\
		\frac12 \left(\frac{\partial \widehat \sigma_i}{\partial (\log \lambda_j)} + \frac{\partial \widehat \sigma_j}{\partial (\log \lambda_i)} \right) & \qquad i \neq j.
		\end{array}
	\right.
	\end{align}
Now we need to establish a relation between the entries of $\sym \, \DD_{\log \lambda} \widehat \sigma(\log \lambda)$ and the components of $Q_{\hyp(1)}$. To achieve this, we need to obtain explicit relations between $\widehat\sigma$, its derivatives and the derivatives of $\widehat \WW$. \\
\\
We begin by computing the derivative on both sides of equation \eqref{eq:lambda} w.r.t.\ $\lambda$
\begin{equation}
	\begin{cases}
		\displaystyle 
		\frac{\partial}{\partial \lambda_i}\breve{\WW}\partonb{e(\lambda)}
		=
		\scal{\pD_e \, \breve{\WW}\partonb{e(\lambda)}}{ \frac{\partial e}{\partial \lambda_i}}
		=
		\sum_j \frac{\partial\breve{\WW}}{\partial e_j}\partonb{e(\lambda)}\frac{\partial e_j}{\partial \lambda_i}
		=
		\sum_j \frac{\partial\breve{\WW}}{\partial e_j}\partonb{e(\lambda)} \, \lambda_i \, \delta_{ij}
		=
		\frac{ \partial \breve{\WW}}{\partial e_i}\partonb{e(\lambda)} \; \lambda_i,
		\\[5mm]
		\displaystyle 	\frac{\partial}{\partial \lambda_i}\widehat{\WW}(\log \lambda)
		=
		\scal{\pD_{\log\lambda} \, \widehat{\WW}(\log \lambda)}{ \frac{\partial\log \lambda}{\partial \lambda_i}}
		=
		\sum_j \frac{\partial \widehat{\WW}(\log \lambda)}{\partial (\log \lambda_j)} \, \frac{1}{\lambda_i} \, \delta_{ij}
		=
		\frac{\partial \widehat{\WW}(\log \lambda)}{\partial (\log \lambda_i)} \, \frac{1}{\lambda_i}.
	\end{cases}
\end{equation}
Hence from the identity $\frac{\partial}{\partial \lambda_i}\breve{\WW}\partonb{e(\lambda)}=\frac{\partial}{\partial \lambda_i}\widehat{\WW}(\log \lambda)$ we obtain 
\begin{equation}
\label{eqlambdaquadr01}
	 \frac{\partial \widehat{\WW}(\log \lambda)}{\partial (\log \lambda_i)}
	 =
	 \frac{ \partial \breve{\WW}}{\partial e_i}\partonb{e(\lambda)} \; \lambda_i^2
	 =
	 J\,\sigma_i(e(\lambda)) = J \, \widehat \sigma_i (\lambda)
	 \quad
	 \Longleftrightarrow
	 \quad
	 \widehat \sigma_i
	 =
	 \frac{1}{J} \frac{\partial \widehat{\WW}(\log \lambda)}{\partial (\log \lambda_i)},
	 \qquad
	 \forall \, i\in\{1,2,3\}.
\end{equation}
Recalling that
\begin{align}
    J
    =
    \det F
    =
    \lambda_1 \, \lambda_2 \, \lambda_3
    =
    e^{\log \lambda_1 + \log \lambda_2 + \log \lambda_3},
\end{align}
we proceed with the relation
\begin{align}
	\frac{\partial \widehat \sigma_i}{\partial \log\lambda_i}
	&
	=
	\frac{\partial }{\partial \log\lambda_i}\partonbb{\frac{1}{J}\frac{\partial \widehat{\WW}(\log \lambda)}{\partial (\log \lambda_i)}}
	=
	\frac{\partial }{\partial \log\lambda_i}
	\partonbb{e^{-\sum_k\log\lambda_k}
	\;
	\frac{\partial \widehat{\WW}(\log \lambda)}{\partial (\log \lambda_i)
    }}
    \nonumber
    \\
    &
    =
    -
    e^{-\sum_k\log\lambda_k}
    \;
    \frac{\partial \widehat{\WW}(\log \lambda)}{\partial (\log \lambda_i)
    }
    +
    e^{-\sum_k\log\lambda_k}
    \;
    \frac{\partial^2 \widehat{\WW}(\log \lambda)}{\partial (\log \lambda_i)^2
    }
    \\
    &
    =
    \frac{1}{J}
    \partonbb{
    -
    \frac{\partial \widehat{\WW}(\log \lambda)}{\partial (\log \lambda_i)
    }
    +
    \frac{\partial^2 \widehat{\WW}(\log \lambda)}{\partial (\log \lambda_i)^2
    }},\nonumber
\end{align}
i.e.
\begin{equation}
\label{eqreffromearlier01}
	J \, \frac{\partial \widehat \sigma_i}{\partial \log\lambda_i}
	=
	\frac{\partial^2 \widehat{\WW}(\log \lambda)}{\partial (\log \lambda_i)^2}
	-
	\frac{\partial \widehat{\WW}(\log \lambda)}{\partial (\log \lambda_i)}.
\end{equation}
Moreover, from taking the second derivatives
\begin{equation}
	\begin{cases}
		\displaystyle 
		\frac{\partial^2}{\partial \lambda_i^2}\breve{\WW}\partonb{e(\lambda)}
		=
		\frac{\partial}{\partial \lambda_i}
		\partonbb{
		\frac{\partial}{\partial \lambda_i}\breve{\WW}\partonb{e(\lambda)}
	    }
		=
		\frac{\partial}{\partial \lambda_i}
		\partonbb{
			\frac{ \partial \breve{\WW}}{\partial e_i}\partonb{e(\lambda)} \; \lambda_i
		}
		=
		\lambda_i^2
		\;
		\frac{ \partial^2 \breve{\WW}}{\partial e_i^2}\partonb{e(\lambda)} 
		+
		\frac{ \partial \breve{\WW}}{\partial e_i}\partonb{e(\lambda)}, 
		\\[5mm]
		\displaystyle 	\frac{\partial^2}{\partial \lambda_i^2}\widehat{\WW}(\log \lambda)
		=
		\frac{\partial}{\partial \lambda_i}
		\partonbb{
		\frac{\partial}{\partial \lambda_i}\widehat{\WW}(\log \lambda)
	    }
		=
		\frac{\partial}{\partial \lambda_i}
		\partonbb{
			\frac{\partial \widehat{\WW}(\log \lambda)}{\partial (\log \lambda_i)} \, \frac{1}{\lambda_i}
		}
		=
		\frac{\partial^2 \widehat{\WW}(\log \lambda)}{\partial (\log \lambda_i)^2} \, \frac{1}{\lambda_i^2}
		-
		\frac{\partial \widehat{\WW}(\log \lambda)}{\partial (\log \lambda_i)}\frac{1}{\lambda_i^2}
		,
		\\[5mm]
		\displaystyle 
		\hphantom{	\frac{\partial^2}{\partial \lambda_i^2}\widehat{\WW}(\log \lambda) }
		=
		\frac{1}{\lambda_i^2}
		\partonbb{
		\frac{\partial^2 \widehat{\WW}(\log \lambda)}{\partial (\log \lambda_i)^2}
		-
		\frac{\partial \widehat{\WW}(\log \lambda)}{\partial (\log \lambda_i)}
	    },
	\end{cases}
\end{equation}
we obtain the identity
\begin{equation}
	  \frac{\partial^2 \widehat{\WW}(\log \lambda)}{\partial (\log \lambda_i)^2}
	  -
	  \frac{\partial \widehat{\WW}(\log \lambda)}{\partial (\log \lambda_i)}
	  =
	  \lambda_i^4
	  \;
	  \frac{ \partial^2 \breve{\WW}}{\partial e_i^2}\partonb{e(\lambda)} 
	  +
	  \lambda_i^2
	  \;
	  \frac{ \partial \breve{\WW}}{\partial e_i}\partonb{e(\lambda)} \, ,
\end{equation}
allowing us to establish with the use of \eqref{eqreffromearlier01}
\begin{equation}\label{eq:Q_1}
	\frac{J}{\lambda_i^4} \, \frac{\partial \widehat \sigma_i}{\partial \log\lambda_i}
	=
	\frac{ \partial^2 \breve{\WW}}{\partial e_i^2}\partonb{e(\lambda)} 
	+
	\frac{1}{\lambda_i^2}
	\;
	\frac{ \partial \breve{\WW}}{\partial e_i}\partonb{e(\lambda)},
\end{equation}
yielding the required relation for the diagonal entries of $\sym \, \DD_{\log \lambda} \widehat \sigma(\log \lambda)$. \\
\\
Proceeding with the non-diagonal entries, we observe that
\begin{align}
\label{eq3.57}
	\frac{J}{2 \, \lambda_i^2 \, \lambda_j^2} \, \left(\frac{\partial \widehat \sigma_i}{\partial (\log \lambda_j)} + \frac{\partial \widehat \sigma_j}{\partial (\log \lambda_i)} \right)
	=
	\frac{\partial^2 \breve{\WW}(e)}{\partial e_i \, \partial e_j} 
	- 
	\frac12 \, \left( \frac{1}{\lambda_i^2} \, \frac{\partial \breve{\WW}(e)}{\partial e_j} + \frac{1}{\lambda_j^2} \, \frac{\partial \breve{\WW}(E)}{\partial e_i} \right) \quad \text{for} \quad i \neq j \, ,
\end{align}
holds. To see \eqref{eq3.57}, consider that 
\begin{equation}
	\begin{cases}
		\displaystyle
		\frac{\partial \widehat \sigma_i}{\partial(\log\lambda_j)}
		=
		\frac{\partial }{\partial(\log\lambda_j)}
		\partonbb{\frac{1}{J}\frac{\partial \widehat{\WW}(\log \lambda)}{\partial (\log \lambda_i)}}
		=
		-
		\frac{1}{J}\frac{\partial \widehat{\WW}(\log \lambda)}{\partial (\log \lambda_i)}
		+
		\frac{1}{J}\frac{\partial^2 \widehat{\WW}(\log \lambda)}{\partial (\log \lambda_j)\partial (\log \lambda_i)},
		\\[5mm]
		\displaystyle
		\frac{\partial \widehat \sigma_j}{\partial(\log\lambda_i)}
		=
		\frac{\partial }{\partial(\log\lambda_i)}
		\partonbb{\frac{1}{J}\frac{\partial \widehat{\WW}(\log \lambda)}{\partial (\log \lambda_j)}}
		=
		-
		\frac{1}{J}\frac{\partial \widehat{\WW}(\log \lambda)}{\partial (\log \lambda_j)}
		+
		\frac{1}{J}\frac{\partial^2 \widehat{\WW}(\log \lambda)}{\partial (\log \lambda_i)\partial (\log \lambda_j)},
	\end{cases}
\end{equation}
and
\begin{equation}
	\begin{cases}
		\displaystyle
		\frac{\partial \breve{\WW}}{\partial\lambda_i\partial\lambda_j}
		=
		\frac{\partial}{\partial\lambda_i}
		\frac{\partial\breve{\WW}}{\partial\lambda_j}
		=
		\frac{\partial}{\partial\lambda_i}
		\partonbb{\frac{\partial\breve{\WW}}{\partial e_j}\lambda_j}
		=
		\frac{\partial^2 \breve{\WW}}{\partial e_i\partial e_j}\lambda_i\lambda_j,
		\\[5mm]
		\displaystyle
		\frac{\partial \widehat{\WW}}{\partial\lambda_i\partial\lambda_j}
		=
		\frac{\partial}{\partial\lambda_i}
		\frac{\partial\widehat{\WW}}{\partial\lambda_j}
		=
		\frac{\partial}{\partial\lambda_i}
		\partonbb{\frac{\partial\widehat{\WW}}{\partial(\log \lambda_j)} \; \frac{1}{\lambda_j}}
		=
		\frac{\partial^2\widehat{\WW}}{\partial(\log \lambda_i)\partial(\log \lambda_j)} \; \frac{1}{\lambda_i\lambda_j}.
	\end{cases}
\end{equation} 
Therefore, using \eqref{eqlambdaquadr01} with $i \neq j$ we have
\begin{align}\label{eq:Q_2}
		\frac{J}{2 \, \lambda_i^2 \, \lambda_j^2} \, \left(\frac{\partial \widehat \sigma_i}{\partial (\log \lambda_j)} + 
		\frac{\partial \widehat \sigma_j}{\partial (\log \lambda_i)} \right) 
		&
		=
		\frac{J}{2 \, \lambda_i^2 \, \lambda_j^2}
		\partonbb{
		-
		\frac{1}{J}\frac{\partial \widehat{\WW}(\log \lambda)}{\partial (\log \lambda_i)}
		-
		\frac{1}{J}\frac{\partial \widehat{\WW}(\log \lambda)}{\partial (\log \lambda_j)}
		+
		2
		\frac{1}{J}\frac{\partial^2 \widehat{\WW}(\log \lambda)}{\partial (\log \lambda_j)\partial (\log \lambda_i)}
		}
		\nonumber
		\\
		&
		=
		\frac{1}{2 \, \lambda_i^2 \, \lambda_j^2}
		\partonbb{
			-
			\frac{ \partial \breve{\WW}}{\partial e_i}\partonb{e(\lambda)} \; \lambda_i^2
			-
			\frac{ \partial \breve{\WW}}{\partial e_j}\partonb{e(\lambda)} \; \lambda_j^2
			+
			2
			\frac{\partial^2 \breve{\WW}}{\partial e_i\partial e_j}\lambda_i^2\lambda_j^2
		}
		\nonumber
		\\
		&
		=
		\frac{\partial^2 \breve{\WW}(e)}{\partial e_i\partial e_j}
		-
		\frac{1}{2}
		\partonbb{
		\frac{ \partial \breve{\WW}(e)}{\partial e_i} \; \frac{1}{\lambda_j^2}
		+
		\frac{ \partial \breve{\WW}(e)}{\partial e_j} \; \frac{1}{\lambda_i^2}
	    }.
\end{align}
Finally, by combining \eqref{eq:Q_1} with \eqref{eq:Q_2} and recalling
\begin{align}
	Q_{\hyp(1)}(\dot E_{11},\dot E_{22}, \dot E_{33})
	=
	\sum_{i=1}^3\partonbb{
		\frac{\partial^2 \breve{\WW}}{\partial e_i^2}+\frac{1}{\lambda_i^2}\frac{\partial \breve{\WW}}{\partial e_i}
	}\dot E_{ii}^2
	+
	\sum_{i\neq j}
	\partonbb{
		\frac{\partial^2 \breve{\WW}}{\partial e_i\partial e_j}
		-
		\frac{1}{2}
		\partonbb{
			\frac{1}{\lambda_i^2}
			\frac{\partial \breve{\WW}}{\partial e_j}
			+\frac{1}{\lambda_j^2}
			\frac{\partial \breve{\WW}}{\partial e_i}
		}
	}
	\dot E_{ii}
	\dot{E}_{jj},
\end{align}
it follows that
	\begin{align}
	Q_{\hyp(1)}(\dot{E}_{11},\dot{E}_{22},\dot{E}_{33}) 
	&= 
	\frac{J}{2} \, \sum_{i,j} \left(\frac{\partial \widehat \sigma_i}{\partial (\log \lambda_j)} 
	+ 
	\frac{\partial \widehat \sigma_j}{\partial (\log \lambda_i)}\right) \, \frac{\dot{E}_{ii}}{\lambda_i^2} \, \frac{\dot{E}_{jj}}{\lambda_j^2} \\
	&\hspace*{-100pt}=
	J \, \left \langle \begin{pmatrix} \dot E_{11} \\ \dot E_{22} \\ \dot E_{33} \end{pmatrix} \!,\!
	\begin{pmatrix}
	\frac{1}{\lambda_1^4} \, \frac{\partial \widehat \sigma_1}{\partial (\log \lambda_1)} 
	& \frac{1}{2 \, \lambda_1^2 \, \lambda_2^2} \, \left( \frac{\partial \widehat \sigma_1}{\partial (\log \lambda_2)} + \frac{\partial \widehat \sigma_2}{\partial (\log \lambda_1)} \right) 
	& \frac{1}{2 \, \lambda_1^2 \, \lambda_3^2} \, \left( \frac{\partial \widehat \sigma_1}{\partial (\log \lambda_3)} + \frac{\partial \widehat \sigma_3}{\partial (\log \lambda_1)} \right) \\
	\frac{1}{2 \, \lambda_1^2 \, \lambda_2^2} \, \left( \frac{\partial \widehat \sigma_1}{\partial (\log \lambda_2)} + \frac{\partial \widehat \sigma_2}{\partial (\log \lambda_1)} \right) 
	& \frac{1}{\lambda_2^4} \, \frac{\partial \widehat \sigma_2}{\partial (\log \lambda_2)} 
	& \frac{1}{2 \, \lambda_2^2 \, \lambda_3^2} \, \left( \frac{\partial \widehat \sigma_2}{\partial (\log \lambda_3)} + \frac{\partial \widehat \sigma_3}{\partial (\log \lambda_2)} \right) \\
	\frac{1}{2 \, \lambda_1^2 \, \lambda_3^2} \, \left( \frac{\partial \widehat \sigma_1}{\partial (\log \lambda_3)} + \frac{\partial \widehat \sigma_3}{\partial (\log \lambda_1)} \right) 
	& \frac{1}{2 \, \lambda_2^2 \, \lambda_3^2} \, \left( \frac{\partial \widehat \sigma_2}{\partial (\log \lambda_3)} + \frac{\partial \widehat \sigma_3}{\partial (\log \lambda_2)} \right)
	& \frac{1}{\lambda_3^4} \, \frac{\partial \widehat \sigma_3}{\partial (\log \lambda_3)} 
	\end{pmatrix} \!.\!
	\begin{pmatrix} \dot E_{11} \\ \dot E_{22} \\ \dot E_{33} \end{pmatrix} \right\rangle \, ,\notag
	\end{align}
and hence that $Q_{\hyp(1)}$ is positive-definite if and only if the same is true for the tensor $\Lambda$ of components
	\begin{align}
	\label{eqleblond03}
	\Lambda_{ij} = \frac12 \, \left( \frac{\partial \widehat \sigma_i}{\partial (\log \lambda_j)} + \frac{\partial \widehat \sigma_j}{\partial (\log \lambda_i)}\right) = \sym \frac{\partial \widehat \sigma_i(\log \lambda)}{\partial (\log \lambda_j)} = \sym \, \DD_{\log \lambda} \widehat \sigma(\log \lambda)
	\in\Sym^{++}(3)
	.
	\end{align}
\subsubsection{Analysis of $Q_{\hyp (2)}$}\label{par:QQ}
Let us similarly express $Q_{\hyp(2)}$ as a function of the components $\{\sigma_i\}_{i=1}^3$ (the principal Cauchy stresses). From
\begin{align}
	Q_{\hyp(2)}(\dot E_{12},\dot E_{23}, \dot E_{31})
	= 
	\sum_{i\neq j}
	\partonbb{
		\frac{\frac{\partial\breve{\WW}}{\partial e_i}-\frac{\partial\breve{\WW}}{\partial e_j}}{e_i-e_j}
		+
		\frac{1}{\lambda_i^2}\frac{\partial \breve{\WW}}{\partial e_j}+\frac{1}{\lambda_j^2}\frac{\partial \breve{\WW}}{\partial e_i}
	}
	\dot{E}_{ij}^2,
\end{align}
using the already established identity (see \eqref{eq:princ_stresses_sigmaa})
	\begin{align}
	\sigma_i=\frac{\lambda_i^2}{J}\frac{\partial\breve{\WW}}{\partial e_i}\; \iff \;\frac{\partial\breve{\WW}}{\partial e_i}=\frac{J}{\lambda_i^2}\sigma_i,
	\end{align}
we obtain
\begin{align}
	Q_{\hyp(2)}(\dot E_{12},\dot E_{23}, \dot E_{31})
	&
	= 
	\sum_{i\neq j}
	\partonBB{\frac{\frac{\partial\breve{\WW}}{\partial e_i}-\frac{\partial\breve{\WW}}{\partial e_j}}{e_i-e_j}
		+
		\frac{1}{\lambda_i^2}\frac{\partial\breve{\WW}}{\partial e_j}
		+
		\frac{1}{\lambda_j^2}\frac{\partial\breve{\WW}}{\partial e_i}
		}\dot{E}_{ij}^2
	=
	\sum_{i\neq j}
	\partonBB{
		\frac{
			\frac{J}{\lambda_i^2}\widehat \sigma_i-\frac{J}{\lambda_j^2}\widehat \sigma_j}{e_i-e_j}
		+
		\frac{J}{\lambda_i^2\,\lambda_j^2}\widehat \sigma_j
		+
		\frac{J}{\lambda_j^2\,\lambda_i^2}\widehat \sigma_i
	}\dot{E}_{ij}^2\nonumber
	\\
	&
	=
	J \,
	\sum_{i\neq j}
	\partonBB{
		\frac{1}{\underbrace{e_i-e_j}_{\mathclap{=\,\frac{1}{2}\lambda_i^2-\frac{1}{2}-\frac{1}{2}\lambda_j^2+\frac{1}{2}}}}
		\,
		\frac{\lambda_j^2\widehat \sigma_i-\lambda_i^2\widehat \sigma_j}{\lambda_i^2\lambda_j^2}
		+
		\frac{\widehat \sigma_i+\widehat \sigma_j}{\lambda_i^2\,\lambda_j^2}
	}\dot{E}_{ij}^2
	=
	J \,
	\sum_{i\neq j}
	\partonBB{
		\frac{2}{\lambda_i^2-\lambda_j^2}
		\,
		\frac{\lambda_j^2\widehat \sigma_i-\lambda_i^2\widehat \sigma_j}{\lambda_i^2\lambda_j^2}
		+
		\frac{\widehat \sigma_i+\widehat \sigma_j}{\lambda_i^2\,\lambda_j^2}
	}\dot{E}_{ij}^2\nonumber
	\\[-5mm]
	&
	\label{eqQ2forsigma}
	=
	J \,
	\sum_{i\neq j}
	\partonbb{
		\frac{\cancel{2} \, \lambda_j^2 \, \widehat \sigma_i - \cancel{2} \, \lambda_i^2 \, \widehat \sigma_j 
			+ \overbrace{(\widehat \sigma_i+\widehat \sigma_j)(\lambda_i^2-\lambda_j^2)}^{\mathclap{\lambda_i^2 \, \widehat \sigma_i-\cancel{\lambda_j^2 \, \widehat \sigma_i}+\cancel{\lambda_i^2 \, \widehat \sigma_j}-\lambda_j^2 \, \widehat \sigma_j}}}{(\lambda_i^2-\lambda_j^2) \, \lambda_i^2 \, \lambda_j^2}
	}\dot{E}_{ij}^2
	\\
	&
	=
	J \,
	\sum_{i\neq j}
	\partonbb{
		\frac{ \lambda_j^2 \, \widehat \sigma_i -  \lambda_i^2 \, \widehat \sigma_j 
			+ 
			\lambda_i^2 \, \widehat \sigma_i-\lambda_j^2 \, \widehat \sigma_j}{(\lambda_i^2-\lambda_j^2) \, \lambda_i^2 \, \lambda_j^2}
	}\dot{E}_{ij}^2
	=
	J \,
	\sum_{i\neq j}
		\frac{  \widehat \sigma_i \, (\lambda_i^2 + \lambda_j^2 ) 
			-   
			\widehat \sigma_j \, (\lambda_i^2 + \lambda_j^2 )}{(\lambda_i^2-\lambda_j^2) \, \lambda_i^2 \, \lambda_j^2}
	\, \dot{E}_{ij}^2
	\nonumber
	\\
	&
	=
	J \,
	\sum_{i\neq j}
	\frac{  (\widehat \sigma_i - \widehat \sigma_j ) \, (\lambda_i^2 + \lambda_j^2 ) 
		}{(\lambda_i^2-\lambda_j^2) \, \lambda_i^2 \, \lambda_j^2}
	\, \dot{E}_{ij}^2
	=
	J \,
	\sum_{i\neq j}
	\frac{  (\widehat \sigma_i - \widehat \sigma_j ) 
	}{(\lambda_i^2-\lambda_j^2)}
	\,
	\frac{\lambda_i^2 + \lambda_j^2  
	}{ \lambda_i^2 \, \lambda_j^2}
	\, \dot{E}_{ij}^2
	\nonumber
	\\
	&
	=
	J \,
	\sum_{i\neq j}
	\frac{  \widehat \sigma_i - \widehat \sigma_j  
	}{(\lambda_i^2-\lambda_j^2)}
	\underbrace{\partonbb{
	\frac{1  
	}{ \lambda_i^2 }
	+
	\frac{1  
	}{ \lambda_j^2}
    }
	\, \dot{E}_{ij}^2}_{>0}.
	\nonumber
\end{align}
Thus $Q_{\hyp(2)}$ is seen to be positive-definite if and only if the ordering of the principal Cauchy stresses $\widehat \sigma_i$ is the same as that of the principal stretches $\lambda_i$. This is verified if the Baker-Ericksen (BE$^+$) \cite{BakerEri54} inequality is satisfied, i.e. 
	\begin{align}
	(\sigma_i - \sigma_j) \, (\lambda_i - \lambda_j) > 0,
	\end{align}
Incidentally, this is already \underline{true} if $Q_{\hyp(1)}(\dot E)>0$, as previously stated by Hill \cite{hill1968constitutivea}, since\footnote
{Here and throughout, we refer to a differentiable mapping as \emph{strongly monotone} if the symmetric part of its derivative is positive definite. Note that strong monotonicity immediately implies strict (Hilbert) monotonicity.
}
(see also Theorem \ref{theorem:mainResult} in the Appendix)
\begin{equation} 
	\label{eqproof001}
	Q_{\hyp(1)}(\dot E)>0
	\iff
	\Lambda\in\Sym^{++}(3)
	\iff
	\log V
	\to
	\widehat \sigma(\log V)
	\;\textrm{is strongly Hilbert-monotone}
	\quad
	\Longrightarrow
	\quad
	\text{BE}^+,
\end{equation}
where the last implication will be proven in the following
\begin{prop}[TSTS-M$^+$ implies BE$^+$] \label{prop3.7}
Strong Hilbert-monotonicity (always) implies strict Hilbert-monotonicity of $\log V \mapsto \widehat \sigma(\log V)$, which implies the Baker-Ericksen BE$^+$-inequalities \cite{BakerEri54}
	\begin{align}
	(\sigma_i - \sigma_j) \, (\lambda_i - \lambda_j) > 0.
	\end{align}
\end{prop}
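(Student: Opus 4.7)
The proposition breaks into two independent implications which I would tackle in sequence.

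\medskip

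\noindent\textbf{Step 1: strong Hilbert-monotonicity $\Rightarrow$ strict Hilbert-monotonicity.} This is the standard ``mean value'' passage from a pointwise derivative condition to a global monotonicity inequality. Fix $B_1\neq B_2\in\Sym^{++}(3)$ and set $X_1=\log B_1$, $X_2=\log B_2$ in $\Sym(3)$; since $\log$ is injective on $\Sym^{++}(3)$, $X_1\neq X_2$. I would consider the segment $X(t):=X_2+t(X_1-X_2)\in\Sym(3)$ for $t\in[0,1]$ (no exit-from-domain issue, since $\Sym(3)$ is linear and $\exp X(t)\in\Sym^{++}(3)$ automatically) and apply the fundamental theorem of calculus to $t\mapsto\widehat\sigma(X(t))$:
\begin{equation*}
\widehat\sigma(X_1)-\widehat\sigma(X_2)=\int_0^1 \DD_{\log B}\widehat\sigma(X(t)).(X_1-X_2)\,\dif t.
\end{equation*}
Taking the scalar product with $X_1-X_2$ and using that $\langle A.H,H\rangle=\langle \sym A.H,H\rangle$ for every linear operator $A$ on $\Sym(3)$, I would conclude
\begin{equation*}
\langle \widehat\sigma(\log B_1)-\widehat\sigma(\log B_2),\log B_1-\log B_2\rangle=\int_0^1 \langle \sym\DD_{\log B}\widehat\sigma(X(t)).(X_1-X_2),X_1-X_2\rangle\,\dif t>0,
\end{equation*}
where the strict positivity of the integrand for a.e.\ $t$ follows from $\sym\DD_{\log B}\widehat\sigma\in\Sym^{++}_4(6)$ and $X_1-X_2\neq 0$.

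\medskip

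\noindent\textbf{Step 2: strict Hilbert-monotonicity $\Rightarrow$ BE$^+$.} The key idea is to exploit isotropy by testing the monotonicity inequality on a pair $(B,B')$ obtained from each other by a permutation of two principal stretches. Concretely, I would fix indices $i\neq j$ and, given $\lambda_1,\lambda_2,\lambda_3>0$ with $\lambda_i\neq\lambda_j$, pick $V=\diag(\lambda_1,\lambda_2,\lambda_3)$ and $V'=P_{ij}\,V\,P_{ij}^T$, where $P_{ij}\in\OO(3)$ is the transposition permutation. Isotropy of $\widehat\sigma$ together with the spectral decomposition yields
\begin{equation*}
\widehat\sigma(\log V)=\diag(\sigma_1,\sigma_2,\sigma_3),\qquad \widehat\sigma(\log V')=P_{ij}\,\widehat\sigma(\log V)\,P_{ij}^T,
\end{equation*}
so that $\widehat\sigma(\log V)-\widehat\sigma(\log V')$ and $\log V-\log V'$ are both diagonal with the only nonzero entries at positions $(i,i)$ and $(j,j)$, equal to $\pm(\sigma_i-\sigma_j)$ and $\pm(\log\lambda_i-\log\lambda_j)$ respectively. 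Plugging into the strict Hilbert-monotonicity inequality from Step~1 gives
\begin{equation*}
0<\langle \widehat\sigma(\log V)-\widehat\sigma(\log V'),\log V-\log V'\rangle=2\,(\sigma_i-\sigma_j)\,(\log\lambda_i-\log\lambda_j).
\end{equation*}

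\medskip

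\noindent\textbf{Step 3: conclusion via monotonicity of $\log$.} Since $t\mapsto\log t$ is strictly increasing on $(0,\infty)$, $\log\lambda_i-\log\lambda_j$ has the same sign as $\lambda_i-\lambda_j$, so the previous line yields $(\sigma_i-\sigma_j)(\lambda_i-\lambda_j)>0$, which is the Baker-Ericksen inequality BE$^+$. Running the same argument for each pair $i\neq j$ completes the proof. The only mildly delicate point is Step~2 — one must be careful that the ``isotropy'' conclusion $\widehat\sigma(P\,\log V\,P^T)=P\,\widehat\sigma(\log V)\,P^T$ is legitimately used with a discrete orthogonal matrix (here a permutation) and that the resulting principal values of $\widehat\sigma(\log V')$ are indeed $\sigma_i,\sigma_j$ swapped rather than new unknown quantities; both follow directly from the functional relation $\sigma_i=\sigma_i(\lambda_1,\lambda_2,\lambda_3)$ being symmetric in a suitably permuted way, which is guaranteed by the isotropy already used throughout the paper.
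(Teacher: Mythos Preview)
Your proof is correct and follows essentially the same route as the paper. Your Step~1 reproduces the integral argument that the paper gives in the appendix (Remark~\ref{remarkmon}); your Step~2 is the paper's permutation trick (the paper illustrates it first in 2D and then lifts it to $n$ dimensions, whereas you go straight to 3D with $P_{ij}$), and your Step~3 is identical to the paper's. The only cosmetic caveat is that you invoke $P_{ij}\in\OO(3)$ while the paper's isotropy is stated for $\SO(3)$, but this is harmless since either $-\id$ trivially extends the invariance to $\OO(3)$ or one replaces $P_{ij}$ by the rotation through $\pi$ about the $(e_i+e_j)/\sqrt{2}$ axis, which effects the same swap on diagonal matrices.
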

\begin{proof}
Assume that the mapping $\log V \mapsto \widehat \sigma(\log V)$ is strictly monotone, i.e.,
	\begin{align}
	\label{eqmonotonlog}
	\text{TSTS-M$^+$:} \quad \langle \widehat \sigma(\log V_2) - \widehat \sigma(\log V_1), \log V_2 - \log V_1 \rangle > 0 \qquad \forall \, V_1, V_2 \in \Sym^{++}(3), \quad V_1 \neq V_2.
	\end{align}
We begin by considering the two-dimensional case, i.e.~$V \in \Sym^{++}(2)$. Since $\sigma(V) = \widehat \sigma(\log V)$ is an isotropic tensor function, the principal Cauchy stresses $\widehat \sigma_i(\log \lambda)$ are given by the diagonal entries of $\widehat \sigma(\log \diag(\lambda_1, \lambda_2))$, which itself is a diagonal matrix, i.e.
	\begin{align}
	\sigma(\log \diag(\lambda_1, \lambda_2))
	=
	\begin{pmatrix} \sigma_1(\log \lambda_1, \log \lambda_2) & 0 \\ 0 & \sigma_2(\log \lambda_1, \log \lambda_2) \end{pmatrix}.
	\end{align}
Additionally, also due to isotropy, we have by reversing $\lambda_1, \lambda_2$
	\begin{align}
	\sigma(\log \diag(\lambda_2, \lambda_1))
	=
	\begin{pmatrix} \sigma_2(\log \lambda_1, \log \lambda_2) & 0 \\ 0 & \sigma_1(\log \lambda_1, \log \lambda_2) \end{pmatrix}.
	\end{align}
Now, the monotonicity \eqref{eqmonotonlog} implies with $V_1 = \diag(\lambda_2, \lambda_1), \; V_2 = \diag(\lambda_1, \lambda_2)$
	\begin{equation}
	\begin{alignedat}{2}
	0 &< \langle \widehat \sigma(\log V_2) - \widehat \sigma(\log V_1), \log V_2 - \log V_1 \rangle \\
	&= \langle \begin{pmatrix} \widehat \sigma_2 & 0 \\ 0 & \widehat \sigma_1 \end{pmatrix} 
	- \begin{pmatrix} \widehat \sigma_1 & 0 \\ 0 & \widehat \sigma_2 \end{pmatrix}, 
	\begin{pmatrix} \log \lambda_2 & 0 \\ 0 & \log \lambda_1 \end{pmatrix}
	- \begin{pmatrix} \log \lambda_1 & 0 \\ 0 & \log \lambda_2 \end{pmatrix} \rangle \\
	&= (\widehat \sigma_2 - \widehat \sigma_1) \, (\log \lambda_2 - \log \lambda_1) + (\widehat \sigma_1 - \widehat \sigma_2) \, (\log \lambda_1 - \log \lambda_2) = 2 \, (\log \lambda_2 - \log \lambda_1) \, (\widehat \sigma_2 - \widehat \sigma_1) \\
	\implies \qquad 0 &< (\lambda_2 - \lambda_1) \, (\sigma_2 - \sigma_1),
	\end{alignedat}
	\end{equation}
where the last implication is due to the monotonicity of the (scalar) logarithm function (i.e.~$\lambda_2 > \lambda_1$ if and only if $\log \lambda_2 > \log \lambda_1$) and the fact that $\widehat \sigma_i(\log \lambda) = \sigma_i(\lambda)$ per definition. \\
\\
Generalizing this argument to three (or even $n$) dimensions we pick two distinct eigenvalues $\lambda_i \neq \lambda_j$ of the matrix $V$ and interchange their position in the diagonal matrix $\log \diag(\lambda_1, \ldots, \lambda_n)$, as we did for $\lambda_1$ and $\lambda_2$ in the two-dimensional case. This once again yields two matrices $V_1 = \diag(\ldots, \lambda_i, \ldots, \lambda_j, \ldots)$ and $V_2 = (\ldots, \lambda_j, \ldots, \lambda_i, \ldots)$ that can be inserted into the monotonicity inequality \eqref{eqmonotonlog}. When subtracting $\log V_2 - \log V_1$, every entry that is not $\log \lambda_i$ or $\log \lambda_j$ cancels out, resulting in
	\begin{align}
	0 &< \langle \widehat \sigma(\log V_2) - \widehat \sigma(\log V_1), \log V_2 - \log V_1 \rangle &&= (\widehat \sigma_i - \widehat \sigma_j) \, (\log \lambda_i - \log \lambda_j) + (\widehat \sigma_j - \widehat \sigma_i) \, (\log \lambda_j - \log \lambda_i) \notag \\
	& &&= 2 \, (\log \lambda_i - \log \lambda_j) \, (\widehat \sigma_i - \widehat \sigma_j) \notag \\
	\implies \qquad 0 &< (\lambda_i - \lambda_j) \, (\sigma_i - \sigma_j), \quad \text{for} \quad i \neq j, &&
	\end{align}
with the same reasoning as in the two-dimensional case.
\end{proof}
\begin{rem}
Note that monotonicity of $\widehat \sigma(\log V)$ in $\log V$ is equivalent to monotonicity of $\widehat \sigma(\log B)$ in $\log B$, since $\log B = 2 \, \log V$.
\end{rem}
\subsubsection{Summarizing considerations for the hyperelastic case}
Summarizing the results from the previous Sections, we have proven:
	\begin{center}
	\fbox{
	\begin{minipage}[h!]{0.85\linewidth}
		\centering
		\textbf{For an isotropic hyperelastic material, the corotational stability postulate (CSP) $\langle \frac{\DD^{\ZJ}}{\DD t}[\sigma] , D \rangle > 0$ is equivalent to the strong Hilbert-monotonicity of $\widehat \sigma$ in $\log V$, which is TSTS-M$^{++}$.}
	\end{minipage}}
	\end{center}
It will be proven in detail (see Appendix \ref{appendixhill}) with the methods presented in this chapter, that Hill's inequality $\langle \frac{\DD^{\ZJ}}{\DD t}[\tau] , D \rangle > 0$ (see \cite{hill1968constitutivea,hill1968constitutiveb}) for the Kirchhoff stress tensor $\tau$ is satisfied if and only if the tensor of components $\frac{\partial \tau_i}{\partial (\log \lambda_j)}$, which is (major) symmetric since $\tau_i = J \, \sigma_i = \pD_{\log \lambda_i} \widehat{\WW}(\log \lambda_i)$, is positive-definite (even for the more general Cauchy-elastic case). In the hyperelastic case this implies that $\log V \mapsto \widehat \WW(\log V)$ must be a convex function and for $\tau = \DD_{\log V} \widehat \WW(\log V)$ it holds
	\begin{align}
	\langle \tau(\log V_1) - \tau(\log V_2), \log V_1 - \log V_2 \rangle > 0, \qquad \forall \, V_1, V_2 \in \Sym^{++}(3), \quad V_1 \neq V_2.
	\end{align}
\textbf{This requirement, however, is not strong enough to exclude physically unacceptable responses}, as shows the constitutive Hencky energy \cite{Neff_Osterbrink_Martin_Hencky13}, for which $\tau = 2 \, \mu \, \log V + \lambda \, \tr(\log V) \cdot \id$ is strongly Hilbert-monotone in $\log V$ for $2 \, \mu + 3 \, \lambda > 0$.
%
%
%
%
\section{The corotational stability postulate in the Cauchy-elastic case} \label{sec:leblondcauchy}
In this section we generalize the results for hyperelasticity and the Zaremba-Jaumann rate from the previous section by showing, that the same conclusion still holds true in the Cauchy-elastic case. The crucial difference for Cauchy-elasticity consists in the fact that the second Piola-Kirchhoff stress tensor $S_2$ is now not necessarily derived from an elastic energy density $\WW(F)$. Thus, its derivative $\pD_E\,S_2(E)$ could lack the major symmetry property. \\
\\
More precisely, in the hyperelastic case it is $\pD_E \, S_2(E)=\pD^2_E \WW(E)$, so that $\pD_E \, S_2$ inherits both the major and minor symmetries by Schwarz's theorem. This means that 
\begin{equation}
	\pD_E \, S_2\in\Sym\partonb{\Sym(3),\Sym(3)\,}
	\coloneqq
	\grafb{T\in\textrm{Lin}\partonb{\Sym(3),\Sym(3)\,}\;\left.\right|\;\scal{T.A}{B}
		=
		\scal{A}{T.B}\;\forall \, A,B\in\Sym(3)}.
\end{equation}
Component-wise, $\pD_E \, S_2\in\Sym\partonb{\Sym(3),\Sym(3)\,}$ reads as
\begin{align}
    (\pD_E \, S_2)_{ijhk}
    =
    (\pD_E \, S_2)_{jihk}=(\pD_E \, S_2)_{ijkh}=(\pD_E \, S_2)_{hkij}
    \qquad
    \forall \, 
    i,j,h,k\in\{1,2,3\}.
\end{align}
However, in the Cauchy-elastic case, we only have $\pD_E \, S_2\in\textrm{Lin}\partonb{\Sym(3),\Sym(3)\,}$, i.e.~component-wise,
\begin{align}
    (\pD_E \, S_2)_{ijhk}
    =
    (\pD_E \, S_2)_{jihk}=(\pD_E \, S_2)_{ijkh}
    \qquad
    \forall
    i,j,h,k\in\{1,2,3\}.
\end{align}
Let us explicitly remark that $\pD_E \, S_2(E)$ is the linearization of the second Piola-Kirchhoff stress tensor $S_2(E)$ at $E$, so that $S_2:\Sym(3)\to\Sym(3)$ is in general a non-linear function
and ($T_E \Sym(3)$ denoting the tangential space of $\Sym(3)$)
\begin{align}
\pD_E \, S_2(E):\Sym(3)\simeq T_E\Sym(3)\xrightarrow{\phantom{aaaa}} T_E\Sym(3)\simeq\Sym(3)
\end{align}
because $\Sym(3)$ is an affine space. \\
\\
As before, our goal consists in showing that the bilinear form
\begin{equation}
	\begin{alignedat}{2}
		Q_\ela(\dot{E})
		&=
		\langle \dot S_2(E) , \dot{E} \rangle + 2 \, \,\textrm{tr}\,\big(C^{-1}\,\dot E\,S_2\,\dot E\big) - \langle C^{-1} , \dot{E} \rangle \, \langle S_2(E) , \dot{E} \rangle
	\end{alignedat}
\end{equation}
derived in Section \ref{sec3.01} is positive definite even in the Cauchy-elastic case.  

\subsection{Representation of $S_2$ in the eigenbasis of $E$}

Similar to Corollary \ref{cor2} we have
\begin{prop} \label{eigenbasis}
Consider a homogeneous and isotropic Cauchy-elastic material. Then there are functions $\{\breve s_j(e)\}_{j=1}^3$,
 such that $S_2(C)$ can be represented in the eigenvalues and eigenvectors of $E$ by setting
\begin{equation}
	S_2(C(e))
	=
    \breve
	S_2(e)
	=
	\sum_{j=1}^3 \breve s_j(e) \, U^j\otimes U^j.
\end{equation}
This is, for the non-hyperelastic case, the analogue of \eqref{eq:stress_Kirchh_dec}. 
\end{prop}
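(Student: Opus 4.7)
The plan is to mimic the proof of Proposition~\ref{prop1}, now relying not on the existence of an elastic potential but directly on the isotropy of the constitutive map. Viewing $S_2$ as a tensor-valued function $\overline S_2(C)$ of the right Cauchy--Green tensor, the defining property of isotropy for a Cauchy-elastic response reads
\begin{align*}
Q\,\overline S_2(C)\,Q^T \;=\; \overline S_2(Q\,C\,Q^T) \qquad \forall\,Q\in\SO(3),
\end{align*}
and no scalar potential needs to intervene.

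First I would use the spectral theorem to write $C=R^T\,\widehat D\,R$ with $R\in\SO(3)$ whose columns in $R^T$ are the eigenvectors $U^1,U^2,U^3$ of $C$ (and therefore of $E$), and with $\widehat D=\diag(\lambda_1^2,\lambda_2^2,\lambda_3^2)=\diag(2\,e_1+1,2\,e_2+1,2\,e_3+1)$. Choosing $Q=R$ in the isotropy relation immediately gives
\begin{align*}
\overline S_2(C) \;=\; R^T\,\overline S_2(\widehat D)\,R,
\end{align*}
so the whole statement reduces to showing that $\overline S_2(\widehat D)$ is diagonal in the canonical basis.

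For this diagonality I would test isotropy against the three rotations $Q_1\coloneqq\diag(1,-1,-1)$, $Q_2\coloneqq\diag(-1,1,-1)$, $Q_3\coloneqq\diag(-1,-1,1)$, each lying in $\SO(3)$. Since $\widehat D$ is diagonal, $Q_j\,\widehat D\,Q_j^T=\widehat D$, and the isotropy relation therefore forces $Q_j$ to commute with $\overline S_2(\widehat D)$ for $j=1,2,3$. As the $+1$--eigenspace of $Q_j$ is one-dimensional and its $-1$--eigenspace is two-dimensional, both subspaces must be invariant under $\overline S_2(\widehat D)$; this kills every off-diagonal entry of $\overline S_2(\widehat D)$. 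Hence $\overline S_2(\widehat D)=\diag\partonb{\breve s_1(e),\breve s_2(e),\breve s_3(e)}$ with $\breve s_j(e)\coloneqq\bigl(\overline S_2(\widehat D(e))\bigr)_{jj}$, and transporting the decomposition back by $R$ yields the claimed representation
\begin{align*}
S_2(C(e)) \;=\; R^T\,\diag\partonb{\breve s_1,\breve s_2,\breve s_3}\,R \;=\; \sum_{j=1}^3 \breve s_j(e)\;U^j\otimes U^j.
\end{align*}

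The only subtlety I foresee is the case of repeated principal stretches: when two (or three) $\lambda_j$ coincide, the eigenvectors $\{U^j\}$ are determined only up to an orthogonal rotation inside the degenerate eigenspace. Applying the same commutation argument to an arbitrary rotation that fixes $\widehat D$ shows that $\breve s_i=\breve s_j$ on the degenerate eigenspace, so the formula remains unambiguous. In contrast with Proposition~\ref{prop1}, no integration of a potential is needed and the functions $\breve s_j$ are defined directly from $\overline S_2$; the whole statement is thus a direct consequence of isotropy combined with the spectral theorem, and I do not expect any genuinely delicate step in the argument.
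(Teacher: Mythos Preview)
Your proof is correct and takes a genuinely different route from the paper. The paper invokes the Rivlin--Ericksen representation theorem (citing Ciarlet) to write $S_2(C)=\gamma_0\,\id+\gamma_1\,C+\gamma_2\,C^2$ with scalar coefficients depending on the principal invariants of $C$; it then substitutes $C=2E+\id$, expands, and reads off $\breve s_j(e)=\sum_{i=0}^2 c_i(\iota)\,e_j^i$ explicitly. You instead work directly from the isotropy transformation law $Q\,\overline S_2(C)\,Q^T=\overline S_2(Q\,C\,Q^T)$: first reducing to a diagonal argument via $Q=R$, then forcing $\overline S_2(\widehat D)$ to be diagonal by commutation with the axis rotations $\diag(\pm1,\pm1,\pm1)\in\SO(3)$. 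Your argument is more elementary and self-contained, since it avoids quoting an external representation theorem; the paper's approach, on the other hand, yields an explicit polynomial formula for the $\breve s_j$ in terms of the invariants, which your bare commutation argument does not provide (though the paper never actually uses that explicit structure downstream). Your treatment of the degenerate-eigenvalue case is also a nice touch that the paper leaves implicit.
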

\begin{proof}
As before, $E$ can be decomposed w.r.t.\ its basis of eigenvectors as $E=\sum_{i=1}^3 e_i\,U^i\otimes U^i$. According to \cite[Thm. 3.6-2, p.116 and Ex. 3.9, p.135]{ciarlet2022}, we can express $S_2$ as a function of the invariants of $C$ when considering a homogeneous and isotropic material. Next, we introduce three auxiliary functions $\gamma_i:\bR^3\to\bR$, $i\in\{1,2,3\}$ 
of the principal invariants $\{\iota_i(\lambda_1^2,\lambda_2^2,\lambda_3^2)\}_{i=0}^2$ of $C$ (which are functions of the eigenvalues of $C$) such that \cite{BakerEri54, richter1948isotrope, richter1949hauptaufsatze, Richter50, Richter52}
\begin{align}\label{dec_C}
	S_2(C)
	&
	=
	\sum_{i=0}^2 \gamma_i(\iota_1,\iota_2,\iota_3) C^i.
\end{align} 
Therefore, considering the relation $C = 2 \, E + \id$, we can expand \eqref{dec_C} as follows
\begin{align}
	S_2(C)
	&
	=
	\sum_{i=0}^2 \gamma_i(\iota_1,\iota_2,\iota_3) C^i
	=
	\sum_{i=0}^2 \gamma_i(\iota_1,\iota_2,\iota_3) (2 \, E + \id)^i
	\nonumber
	\\
	&
	=
	\gamma_0(\iota_1,\iota_2,\iota_3) \, \id
	+
	\gamma_1(\iota_1,\iota_2,\iota_3)(2 \, E + \id)
	+
	\gamma_2(\iota_1,\iota_2,\iota_3) (2 \, E + \id)^2
	\nonumber
	\\
	&
	=
	\gamma_0(\iota_1,\iota_2,\iota_3) \, \id
	+
	\gamma_1(\iota_1,\iota_2,\iota_3)(2 \, E + \id)
	+
	\gamma_2(\iota_1,\iota_2,\iota_3) (4 \, E^2 + 4 \, E + \id)
	\nonumber
	\\
	&
	=
	\parqB{\gamma_0(\iota_1,\iota_2,\iota_3) + \gamma_1(\iota_1,\iota_2,\iota_3) + \gamma_2(\iota_1,\iota_2,\iota_3)} \id
	+
	\parqB{2 \, \gamma_1(\iota_1,\iota_2,\iota_3) + 4 \, \gamma_2(\iota_1,\iota_2,\iota_3)} E
	\nonumber
	\\
	&
	\qquad
	+ 
	4 \, \gamma_2(\iota_1,\iota_2,\iota_3) E^2
\end{align}
Hence, introducing the functions $c_i:\bR^3\to\bR$, $i\in\{0,1,2\}$ of the principal invariants $\{\iota_i(\lambda_1^2,\lambda_2^2,\lambda_3^2)\}_{i=1}^3$ of $C$ as
\begin{align}
    c_0(\iota_1,\iota_2,\iota_3)
    &
    =
    \parqB{\gamma_0(\iota_1,\iota_2,\iota_3) + \gamma_1(\iota_1,\iota_2,\iota_3) + \gamma_2(\iota_1,\iota_2,\iota_3)},
    &
    c_1(\iota_1,\iota_2,\iota_3)
    &
    =
    \parqB{2 \, \gamma_1(\iota_1,\iota_2,\iota_3) + 4 \, \gamma_2(\iota_1,\iota_2,\iota_3)},
    \nonumber
    \\
    c_2(\iota_1,\iota_2,\iota_3)
    &
    =
    4 \, \gamma_2(\iota_1,\iota_2,\iota_3)
\end{align}
we further develop the expression of $S_2(C)$ as follows
\begin{align}
	S_2(C)
	&
	=
	\sum_{i=0}^2 c_i(\iota_1,\iota_2,\iota_3) E^i
	=
	 c_0(\iota_1,\iota_2,\iota_3) \id + c_1(\iota_1,\iota_2,\iota_3) E + c_2(\iota_1,\iota_2,\iota_3)  E^2
	\nonumber
	\\
	&
	=
	c_0(\iota_1,\iota_2,\iota_3) \sum_{j=1}^3  U^j\otimes U^j 
	+ 
	c_1(\iota_1,\iota_2,\iota_3) \sum_{j=1}^3 \, e_j U^j\otimes U^j 
	+ 
	c_2(\iota_1,\iota_2,\iota_3)  \sum_{j=1}^3 (e_j)^2 \, U^j\otimes U^j
	\nonumber
	\\
	&
	=
	\sum_{j=1}^3  \partonbb{ \sum_{i=0}^2 \partonB{ c_i(\iota_1,\iota_2,\iota_3) (e_j)^i } U^j\otimes U^j}.
\end{align} 
Thus, from the relation $C = 2 \, E \, + \id$ and the fact that $\lambda_i^2 = 1 + 2 \, e_i$, we can introduce three new functions $\{\breve s_j(e)\}_{j=1}^3$ defined by
\begin{align}
     \breve s_j(e)
     \coloneqq
     \sum_{i=0}^2
     c_i\partonB{\iota_1\partonb{\lambda_1^2(e),\lambda_2^2(e),\lambda_3^2(e)},\iota_2\partonb{\lambda_1^2(e),\lambda_2^2(e),\lambda_3^2(e)},\iota_3\partonb{\lambda_1^2(e),\lambda_2^2(e),\lambda_3^2(e)}} (e_j)^i,
\end{align}
such that $S_2(C)$ can be represented in the eigenvalues of $E$ by setting
\begin{align}
	S_2(C(e))
	&=
    \breve
	S_2(e)
	=
	\sum_{j=1}^3 \breve s_j(e) \, U^j\otimes U^j. \qedhere
\end{align}
\end{proof}
\noindent By abuse of notation, we simply write $S_2(e)$ for $\breve S_2(e)$. 
\subsection{Positive definiteness of the quadratic form $Q_\ela$ in isotropic Cauchy-elasticity} 
Similar to Proposition \ref{prop5} we now obtain
\begin{prop} \label{propeqall.2}
In the Cauchy-elastic case we have the identities
\begin{equation}
\label{eqall.2}
\begin{alignedat}{2}
\scalb{\dot S_2}{\dot E}
	&
	=
	\sum_{i,j=1}^3
	\frac{\partial \breve s_i}{\partial e_j}
	\, \dot{E}_{jj} \, \dot{E}_{ii}
	+
	\sum_{i\neq j}
	\frac{\breve s_i-\breve s_j}{e_i-e_j}
	\dot{E}_{ij}^2, \\
	\scalb{C^{-1}}{\dot E}\scalb{S_2}{\dot E}
	&
	=
	\sum_{i=1}^3\frac{1}{\lambda_i^2} \,  \breve s_i \,  \dot E_{ii}^2
	+
	\frac{1}{2}
	\sum_{i\neq j}
	\partonbb{
		\frac{1}{\lambda_i^2}
		\breve s_j
		+\frac{1}{\lambda_j^2}
		\breve s_i
	}
	\dot E_{ii}
	\dot{E}_{jj}, \\
\tr\partonb{C^{-1}\dot E \, S_2\,\dot E}
	&
	=
	\sum_{i=1}^3\frac{1}{\lambda_i^2} \, \dot E_{ii}^2 \, \breve s_i
	+
	\frac{1}{2}
	\sum_{i\neq j}
	\partonbb{
		\frac{\breve s_j}{\lambda_i^2} 
		+
		\frac{\breve s_i }{\lambda_j^2}  
	} \dot E_{ij}^2.
\end{alignedat}
\end{equation}

\end{prop}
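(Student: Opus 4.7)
The proof mirrors that of Proposition \ref{prop5}, with $\breve s_i(e)$ playing the role that $\frac{\partial \breve \WW}{\partial e_i}(e)$ played in the hyperelastic case. Proposition \ref{eigenbasis} gives the spectral representation
\begin{align}
S_2 \;=\; \sum_{i=1}^3 \breve s_i(e)\,U^i\otimes U^i,
\end{align}
which is structurally identical to the representation \eqref{eq:stress_Kirchh_dec} used in Corollary \ref{cor2}. The only substantive change is that the coefficients $\breve s_i$ are no longer the partial derivatives of a scalar potential, so in general
\begin{align}
\frac{\partial \breve s_i}{\partial e_j} \;\neq\; \frac{\partial \breve s_j}{\partial e_i},
\end{align}
i.e.\ Schwarz symmetry in the indices $i,j$ is lost. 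This asymmetry will matter later on when studying the quadratic form $Q_\ela$, but it does \emph{not} interfere with the derivation of the three identities of Proposition \ref{propeqall.2}.

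The plan is as follows. First I would differentiate the representation of $S_2$ to obtain the Cauchy-elastic analogue of the formula for $\dot S_2$ from Corollary \ref{cor2}, namely
\begin{align}
\dot S_2 \;=\; \sum_{i=1}^3\left[\,\Big(\sum_{j=1}^3 \frac{\partial \breve s_i}{\partial e_j}\,\dot e_j\Big)\,U^i\otimes U^i \;+\; \breve s_i\,\big(\dot U^i\otimes U^i + U^i\otimes \dot U^i\big)\right].
\end{align}
Then, using Lemma \ref{lem3} (which expresses $\dot E$ and in particular $\dot E_{jj}=\dot e_j$ in the eigenbasis $\{U^i\}$) together with the expansion \eqref{eq.2} of $\dot U^j$ in that basis, I would substitute into $\dot U^i\otimes U^i + U^i\otimes \dot U^i$, reindex one of the two resulting sums via $i\leftrightarrow k$, and use the symmetry $\dot E_{ik}=\dot E_{ki}$ to obtain
\begin{align}
\sum_{i=1}^3 \breve s_i\,(\dot U^i\otimes U^i + U^i\otimes \dot U^i) \;=\; \sum_{i\neq j}\frac{\breve s_i-\breve s_j}{e_i-e_j}\,\dot E_{ij}\,U^i\otimes U^j.
\end{align}
Pairing with $\dot E = \sum_{h,k}\dot E_{hk}\,U^h\otimes U^k$ and using orthonormality isolates the diagonal and off-diagonal contributions and yields $\eqref{eqall.2}_1$.

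For the second and third identities in \eqref{eqall.2}, the computation is literally the one carried out at the end of the proof of Proposition \ref{prop5}: the spectral decomposition of $S_2$, of $C^{-1}=\sum_j \lambda_j^{-2}\,U^j\otimes U^j$, and of $\dot E$ (from Lemma \ref{lem3}) are combined exactly as before — the only change is the notational replacement $\frac{\partial \breve \WW}{\partial e_i}\rightsquigarrow \breve s_i$. Symmetrization of the cross terms over the unordered pair $\{i,j\}$ then produces the $\tfrac12\big(\lambda_i^{-2}\breve s_j + \lambda_j^{-2}\breve s_i\big)$ coefficients appearing in $\eqref{eqall.2}_2$ and $\eqref{eqall.2}_3$.

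No step is genuinely difficult; the main point of care is bookkeeping in the first identity, where one must be vigilant that the $i,j$ indices on $\partial\breve s_i/\partial e_j$ appear in the correct order (since, unlike the hyperelastic case, one cannot freely symmetrize them). In particular one should \emph{not} write $\tfrac12\big(\partial\breve s_i/\partial e_j + \partial\breve s_j/\partial e_i\big)$ during the manipulations, which would be an illegitimate symmetrization here and would silently cost information needed later on in the analysis of $Q_{\ela(1)}$.
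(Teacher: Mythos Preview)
Your proposal is correct and follows essentially the same route as the paper's own proof: differentiate the spectral representation of $S_2$ from Proposition \ref{eigenbasis}, use Lemma \ref{lem3} and \eqref{eq.2} to rewrite the $\dot U^i$-terms, and then repeat verbatim the computations of Proposition \ref{prop5} with $\breve s_i$ in place of $\partial\breve{\WW}/\partial e_i$. Your explicit warning about not symmetrizing $\partial\breve s_i/\partial e_j$ is a useful remark that the paper leaves implicit.
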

\begin{proof}
The derivative $\dot S_2(e)$ can be computed as done before, i.e.
\begin{align}
	\frac{\dif}{\dif t} S_2(e(x,t))
	&
	=
	\frac{\dif}{\dif t}
	\sum_{i=1}^3  \breve s_i(e(x,t)) \, U^i(x,t)\otimes U^i(x,t)
	=
	\sum_{i=1}^3 
	\parqbb{
		\sum_{j=1}^3  \frac{\partial \breve s_i}{\partial e_j}(e) \, \dot e_j \, U^j\otimes U^j
		+
		\breve s_i(e)
		\partonbb{\dot U^i \otimes U^i + U^i \otimes \dot U^i}
	}
	\nonumber
	\\
	\overset{\textrm{by }\eqref{eq:1}}&{=}
	\sum_{i,j=1}^3 
	\frac{\partial \breve s_i}{\partial e_j}(e) \, \dot e_{j} \, U^j\otimes U^j
	+
	\sum_{i\neq j} 
	+
	\frac{\breve s_i(e)-\breve s_j(e)}{e_i-e_j}
	\,
	\dot E_{ij}
	\,
	U^i \otimes U^j.
\end{align}
Then we obtain for $\eqref{eqall.2}_1$
\begin{align}
	\scalb{\dot S_2}{\dot E}
	\overset{\textrm{by} \eqref{eq:Spunto}}&{=}
	\scal{\sum_{i,j=1}^3 
		\frac{\partial \breve s_i}{\partial e_j} \, \dot e_{j} \, U^j\otimes U^j
		+
		\sum_{i\neq j} 
		+
		\frac{\breve s_i-\breve s_j}{e_i-e_j}
		\,
		\dot E_{ij}
		\,
		U^i \otimes U^j
	}{\sum_{h,k} \dot E_{hk} \, U^h\otimes U^k}
	\nonumber
	\\
	&
	=
	\scal{\sum_{i,j=1}^3
		\frac{\partial \breve s_i}{\partial e_j} \, \dot{E}_{jj}
		\;
		U^i\otimes U^i
	}{\sum_{h,k} \dot E_{hk} \, U^h\otimes U^k}
	+
	\scal{
		\sum_{i\neq j}
		\frac{\breve s_i-\breve s_j}{e_i-e_j}
		\dot{E}_{ij} \, U^i\otimes U^j
	}{\sum_{h\neq k} \dot E_{hk} \, U^h\otimes U^k}
	\nonumber
	\\
	&
	=
	\sum_{i,j=1}^3
	\frac{\partial \breve s_i}{\partial e_j}
	\, \dot{E}_{jj} \, \dot{E}_{ii}
	+
	\sum_{i\neq j}
	\frac{\breve s_i-\breve s_j}{e_i-e_j}
	\dot{E}_{ij}^2.
\end{align}
Moreover we have
\begin{align}
	\scalb{S_2}{\dot E}
	&
	=
	\scal{\sum_{j=1}^3 \breve s_j \, U^j\otimes U^j}{
		\sum_{j=1}^3\dot e_j U^j\otimes U^j
		+
		\sum_{j=1}^3 e_j \dot U^j\otimes U^j
		+
		\sum_{j=1}^3 e_j  U^j\otimes \dot U^j
	}
	\nonumber
	\\
	&
	=
	\sum_{j=1}^3 \breve s_j \, \underbrace{\dot e_j}_{=\,\dot E_{jj}}
	+
	\sum_{j=1}^3 \breve s_j \,  \underbrace{\normb{U^j}^2}_{=\,1}
	\underbrace{\scalb{\dot U^j}{U^j}}_{=\,0}
	+
	\sum_{j=1}^3 \breve s_j \, \underbrace{\normb{U^j}^2}_{=\,1}
	\underbrace{\scalb{\dot U^j}{U^j}}_{=\,0}
	=
	\sum_{j=1}^3 \breve s_j \, \dot{E}_{jj},
\end{align}
so that we get for $\scalb{C^{-1}}{\dot E}\scalb{S_2}{\dot E}$
\begin{align}
	\scalb{C^{-1}}{\dot E}\scalb{S_2}{\dot E}
	&
	=
	\parton{\sum_{i=1}^3\frac{1}{\lambda_i^2} \, \dot E_{ii}}
	\parton{\sum_{j=1}^3\breve s_j \, \dot{E}_{jj}}
	=
	\sum_{i=1}^3\frac{1}{\lambda_i^2} \,  \breve s_i \,  \dot E_{ii}^2
	+
	\frac{1}{2}
	\sum_{i\neq j}
	\partonbb{
		\frac{1}{\lambda_i^2}
		\breve s_j
		+\frac{1}{\lambda_j^2}
		\breve s_i
	}
	\dot E_{ii}
	\dot{E}_{jj}.
\end{align}
Finally, the trace $\tr\partonb{C^{-1}\dot E \, S_2\,\dot E}$ is given by
\begin{align}
	\tr\partonb{C^{-1}\dot E \, S_2\,\dot E}
	&
	=
	\tr\partonbb{
		\sum_{k,j}\frac{1}{\lambda_k^2}\partonbb{\sum_{i}\dot E_{ki}
			\, \breve s_i \,
			\dot E_{ij}}U^k\otimes U^j}
	=
	\sum_{i,j}\frac{1}{\lambda_j^2} \, \dot E_{ji} \, \breve s_i \, \dot E_{ij}
	\\
	&
	=
	\sum_{i}\frac{1}{\lambda_i^2} \, \dot E_{ii}^2 \, \breve s_i
	+
	\frac{1}{2}
	\sum_{i\neq j}
	\partonbb{
		\frac{\breve s_j}{\lambda_i^2} 
		+
		\frac{\breve s_i }{\lambda_j^2}  
	} \dot E_{ij}^2. \qedhere
\end{align}
\end{proof}
\noindent Combining these identities and recalling that the quadratic form $Q_{\ela}$ was given by the same expression as $Q_{\hyp}$, namely
	\begin{align}
	Q_{\ela} = \langle \dot S_2(E) , \dot{E} \rangle + 2 \, \,\textrm{tr}\,\big(C^{-1}\,\dot E\,S_2\,\dot E\big) - \langle C^{-1} , \dot{E} \rangle \, \langle S_2(E) , \dot{E} \rangle,
	\end{align}
we see that $Q_{\ela}$ can be rewritten as
\begin{align}
Q_\ela(\dot E)
=
Q_{\ela(1)}(\dot E_{11},\dot E_{22}, \dot E_{33})
+
Q_{\ela(2)}(\dot E_{12},\dot E_{23}, \dot E_{31}),
\end{align}
where this time (compare with \eqref{eqhypcompare1} and \eqref{eqhypcompare2} in the hyperelastic case)
\begin{align}
	Q_{\ela(1)}(\dot E_{11},\dot E_{22}, \dot E_{33})
	&
	=
	\sum_{i=1}^3\partonbb{
		\frac{\partial \breve s_i}{\partial e_i}
		+
		\frac{\breve s_i}{\lambda_i^2}
	}\dot E_{ii}^2
	+
	\sum_{i\neq j}
	\partonbb{
		\frac{\partial \breve s_j}{\partial e_i}
		-
		\frac{1}{2}
		\partonbb{
			\frac{\breve s_j}{\lambda_i^2}
			+
			\frac{\breve s_i}{\lambda_j^2}
		}
	}
	\dot E_{ii}
	\dot{E}_{jj}
\end{align}
and 
\begin{align}
	Q_{\ela(2)}(\dot E_{12},\dot E_{23}, \dot E_{31})
	= 
	\sum_{i\neq j}
	\partonbb{
		\frac{
			\breve s_i
			-
			\breve s_j
		}
		{e_i-e_j}
		+
		\frac{\breve s_j}{\lambda_i^2}
		+
		\frac{\breve s_i}{\lambda_j^2}
	}
	\dot{E}_{ij}^2 \, ,
\end{align}
i.e.
\begin{equation}
	\widehat Q_\ela(\dot E,\dot E)
	=
	\left\langle \begin{pmatrix}
			\dot E_{11} \\ \dot E_{22} \\ \dot E_{33} \\ \dot E_{23} \\ \dot E_{13} \\ \dot E_{12}
		\end{pmatrix}\!,\!
		\left(\begin{array}{@{}c@{}c|c@{}c@{}}
			&\phantom{0} &&
			\\
			& \widehat Q_{\ela(1)} &0 &
			\\
			&\phantom{0} &&
			\\
			\hline
			&\phantom{0} &&
			\\
			&0 & \widehat Q_{\ela(2)}&
			\\
			&\phantom{0} &&
		\end{array}\right)
		\!.\!
		\begin{pmatrix}
			\dot E_{11} \\ \dot E_{22} \\ \dot E_{33} \\ \dot E_{23} \\ \dot E_{13} \\ \dot E_{12}
		\end{pmatrix}\right\rangle
\end{equation}
with
\begin{equation}
	\widehat Q_\ela(\dot E,\dot E)
	=
	\scal{\begin{pmatrix}
			\dot E_{11} \\ \dot E_{22} \\ \dot E_{33} 
	\end{pmatrix}\!\!}{ \widehat Q_{\ela(1)} . \!\! \begin{pmatrix}
			\dot E_{11} \\ \dot E_{22} \\ \dot E_{33} 
	\end{pmatrix}}_{\mathclap{\bR^{3}}}
    \;\;
	+
	\;\;
	\scal{\begin{pmatrix}
			\dot E_{23} \\ \dot E_{13} \\ \dot E_{12}
	\end{pmatrix}\!\!}{ \widehat Q_{\ela(2)} . \!\! \begin{pmatrix}
			\dot E_{23} \\ \dot E_{13} \\ \dot E_{12}
	\end{pmatrix}}_{\mathclap{\bR^{3}}},
\end{equation}
where
\begin{equation*}
	\widehat Q_{\ela(1)}
	=
	\begin{pmatrix}
		\frac{\partial \breve s_1}{\partial e_1}
		+
		\frac{\breve s_1}{\lambda_1^2}
		&
		\frac{\partial \breve s_1}{\partial e_2}
		-
		\frac{1}{2}
		\partonB{
			\frac{\breve s_2}{\lambda_1^2}
			+\frac{\breve s_1}{\lambda_2^2}
		}
		&
		\frac{\partial \breve s_1}{\partial e_3}
		-
		\frac{1}{2}
		\partonB{
			\frac{\breve s_3}{\lambda_1^2}
			+\frac{\breve s_1}{\lambda_3^2}
		}
		\\[2mm]
		\frac{\partial \breve s_2}{\partial e_1}
		-
		\frac{1}{2}
		\partonB{
			\frac{\breve s_2}{\lambda_1^2}
			+\frac{\breve s_1}{\lambda_2^2}
		}
		&
		\frac{\partial \breve s_2}{\partial e_2}
		+
		\frac{\breve s_2}{\lambda_2^2}
		&
		\frac{\partial \breve s_2}{\partial e_3}
		-
		\frac{1}{2}
		\partonB{
			\frac{\breve s_3}{\lambda_2^2}
			+\frac{\breve s_2}{\lambda_3^2}
		}
		\\[2mm]
		\frac{\partial \breve s_3}{\partial e_1}
		-
		\frac{1}{2}
		\partonB{
			\frac{\breve s_3}{\lambda_1^2}
			+\frac{\breve s_1}{\lambda_3^2}
		}
		&
		\frac{\partial \breve s_3}{\partial e_2}
		-
		\frac{1}{2}
		\partonB{
			\frac{\breve s_3}{\lambda_2^2}
			+
			\frac{\breve s_2}{\lambda_3^2}
		}
		&
		\frac{\partial \breve s_3}{\partial e_3}
		+
		\frac{\breve s_3}{\lambda_3^2}
	\end{pmatrix}
\end{equation*}
and
\begin{equation}
	\widehat Q_{\ela(2)}
	=
	\begin{pmatrix}
		\frac{
			\breve s_2
			-
			\breve s_3
		}
		{e_2-e_3}
		+
		\frac{\breve s_3}{\lambda_2^{\mathstrut 2}}
		+
		\frac{\breve s_2}{\lambda_3^{\mathstrut 2}}
		&
		0
		&
		0
		\\[2mm]
		0
		&
		\frac{
			\breve s_1
			-
			\breve s_3
		}
		{e_1-e_3}
		+
		\frac{\breve s_3}{\lambda_1^{\mathstrut 2}}
		+
		\frac{\breve s_1}{\lambda_3^{\mathstrut 2}}
		&
		0
		\\[2mm]
		0
		&
		0
		&
		\frac{
			\breve s_1
			-
			\breve s_2
		}
		{e_1-e_2}
		+
		\frac{\breve s_2}{\lambda_1^{\mathstrut 2}}
		+
		\frac{\breve s_1}{\lambda_2^{\mathstrut 2}}
	\end{pmatrix}.
\end{equation}

%
\clearpage
\subsection{Positive definiteness of the bilinear forms}

The expressions of $Q_{\ela(1)}$ and $Q_{\ela(2)}$ can be simplified by noting that by \eqref{eq.sigma},
\begin{align}
	\sigma 
	= 
	\frac{1}{J} \, F \, S_2 \, F^T 
	= 
	\frac{1}{J} \, F \, \left( \sum_i \breve s_i(e) \, U^i \otimes U^i \right) \, F^T .
\end{align}
Similar to \eqref{eqeigenvaluessigma}, we can prove that the vectors $u^j\coloneqq F.U^j$ are eigenvectors for $\sigma$, since
\begin{align}
	\sigma.u^j
	&
	=
	\frac{1}{J} \partonBB{ F \, \partonbb{ \sum_i \breve s_i(e) \, U^i \otimes U^i } \, F^T }.u^j
	=
	\frac{1}{J} \; F \, \partonbb{ \sum_i \breve s_i(e) \, U^i \otimes U^i } \, (F^T \, F).U^j
	\nonumber
	\\
	&
	=
	\frac{1}{J} \; F . \partonbb{  \sum_i \breve s_i(e) \, U^i \scal{ U^i }{ (F^T \, F).U^j}}
	=
	\frac{1}{J} \; F . \partonbb{  \sum_i \breve s_i(e) \, U^i C_{ij}}
	\\
	&
	=
	\frac{1}{J} \; F . \partonbb{  \sum_i \breve s_i(e) \, U^i \lambda_{j}^2\delta_{ij}}
	=
	\frac{1}{J} \; F . \partonbb{  \breve s_j(e) \, U^j \lambda_{j}^2}
	=
	\frac{1}{J} \, \lambda_{j}^2 \,  \breve s_j(e) \, u^j, \nonumber
\end{align}
where we used that
$
	C
	=
	2E+\id
	=
	\sum_i\lambda_i^2 \, U^i\otimes U^i.
$
Thus, $\sigma$ decomposes as\footnote{Clearly, if the eigenvalues of $\sigma$ are distinct then, by the spectral theorem, the vectors $\{u^i\}_{i=1}^3$ are orthogonal. If there is an eigenvalue with multiplicity bigger than 1, the corresponding eigenvectors in the representation of $\sigma$ are chosen orthogonal.}
$
	\sigma
	=
	\sum_i
	\frac{1}{J} \, \lambda_{i}^2 \,  \breve s_i(e) \, u^i\otimes u^i
$
and the eigenvalues of $\sigma$ are given by 
\begin{equation}\label{eq:princ_stresses_sigma}
	\sigma_i
	=
	\frac{1}{J} \, \lambda_{i}^2 
	\,  
	\breve s_i(e)
	\qquad
	\textrm{for}
	\qquad 
	i\in\{1,2,3\} 
\end{equation}
Note that the vectors $U^i$ are linearly independent and so are the images $u^i$ by invertibility of $F$, which ensures that the $u^i$ are distinct.
\subsubsection{Quadratic form $Q_{\ela(1)}$}
First, let us remark that 
\begin{equation}
	\scal{\begin{pmatrix}
			\dot E_{11} \\ \dot E_{22} \\ \dot E_{33} 
	\end{pmatrix}\!\!}{ \widehat Q_{\ela(1)} . \!\! \begin{pmatrix}
			\dot E_{11} \\ \dot E_{22} \\ \dot E_{33} 
	\end{pmatrix}}_{\!\!\!\bR^{3}}
	=
	\scal{\begin{pmatrix}
			\dot E_{11} \\ \dot E_{22} \\ \dot E_{33} 
	\end{pmatrix}\!\!}{ \sym\widehat Q_{\ela(1)} . \!\! \begin{pmatrix}
			\dot E_{11} \\ \dot E_{22} \\ \dot E_{33} 
	\end{pmatrix}}_{\!\!\!\bR^{3}}
    \qquad
    \forall
    \begin{pmatrix}
    	\dot E_{11} \\ \dot E_{22} \\ \dot E_{33} 
    \end{pmatrix}
    \in\bR^3
\end{equation}
simply because, for any skew symmetric matrix $A\in\mathfrak{so}(3)$, it always holds
$
\scal{A.v}{v}_{\bR^{3}}
=0
$
for every $v\in\bR^3$.

Considering $e_i=\frac{1}{2}\partonb{\lambda_i^2-1}$, since $\lambda\mapsto\log\lambda$ is monotone increasing, we can introduce an auxiliary principal stress in the positive principle stretches $\{\lambda_i\}_{i=1}^3$

\begin{equation}\label{eq:lambda_new_2}
	\widehat s_i(\log \lambda)
	\coloneqq
	\breve s_i(e(\lambda))
	=
	\breve s_i\partonbb{\frac{1}{2}\partonb{\lambda^2-1}},
\end{equation}
where $\lambda$ and $\log \lambda$ stand respectively for the vectors $(\lambda_1,\lambda_2,\lambda_3)$ and $\partonb{\log \lambda_1,\log \lambda_2,\log \lambda_3}$.

Our goal is once again to establish a relation between the entries of $\sym \, \DD_{\log \lambda} \widehat \sigma(\log \lambda)$ and the components of $Q_{\hyp(1)}$. Therefore, we observe that with \eqref{eq:princ_stresses_sigma} and \eqref{eq:lambda_new_2} the identities
\begin{equation} \label{eqrelationfortau}
	J\,\widehat\sigma_i(\log \lambda)
	=
	\lambda_{i}^2 
	\,
	\widehat s_i(\log \lambda)
	\qquad
	\Longleftrightarrow
	\qquad
	\widehat\sigma_i(\log \lambda)
	=
	\frac{\lambda_{i}^2}{J}
	\widehat s_i(\log \lambda)
	\qquad
	\forall i\in\{1,2,3\}
\end{equation}
hold. This allows us to compute the derivative of each $\widehat\sigma_i(\log \lambda)$ w.r.t.\ $\log\lambda_i$. 
Reminding that
\begin{align}
J
=
\det F
=
\lambda_1 \, \lambda_2 \, \lambda_3
=
e^{ \, \sum_{h=1}^3\log\lambda_h},
\end{align}
for the diagonal components of $\pD_{\log\lambda}\widehat\sigma(\log \lambda)$ we obtain
\begin{align}
	\frac{\partial \widehat \sigma_i}{\partial \log\lambda_i}
	&
	=
	\frac{\partial }{\partial \log\lambda_i}
	\partonBB{
		\frac{\lambda_{i}^2}{J}
		\widehat s_i(\log \lambda)
	}
	=
	\frac{\partial }{\partial \log\lambda_i}
	\partonBB{
		\frac{\lambda_{i}}{\prod_{j\neq i}\lambda_j}
		\widehat s_i(\log \lambda)
	}
	=
	\frac{\partial }{\partial \log\lambda_i}
	\partonBB{
		e^{\log\lambda_i}
		\,
		e^{-\log(\prod_{j\neq i}\lambda_j)}
		\,
		\widehat s_i(\log \lambda)
	}\nonumber
	\\
	&
	=
	e^{-\log(\prod_{j\neq i}\lambda_j)}
	\frac{\partial }{\partial \log\lambda_i}
	\partonBB{
		e^{\log\lambda_i}
		\,
		\widehat s_i(\log \lambda)
	}
	=
	e^{-\log(\prod_{j\neq i}\lambda_j)}
	\partonBB{
		e^{\log\lambda_i}
		\,
		\widehat s_i(\log \lambda)
		+
		e^{\log\lambda_i}
		\,
		\frac{\partial \widehat s_i(\log \lambda)}{\partial \log\lambda_i}
	}
	\nonumber
	\\
	&
	=
	e^{-\log(\prod_{j\neq i}\lambda_j)}
	e^{\log\lambda_i}
	\partonBB{
		\widehat s_i(\log \lambda)
		+
		\,
		\frac{\partial \widehat s_i(\log \lambda)}{\partial \log\lambda_i}
	}
	=
	\frac{\lambda_i^2}{J}
	\partonBB{
		\widehat s_i(\log \lambda)
		+
		\,
		\frac{\partial \widehat s_i(\log \lambda)}{\partial \log\lambda_i}
	},
\end{align}
i.e.
\begin{equation}
	J \, \frac{\partial \widehat\sigma_i}{\partial \log\lambda_i}
	=
	\lambda_i^2
	\,
	\partonBB{
		\widehat s_i(\log \lambda)
		+
		\,
		\frac{\partial \widehat s_i(\log \lambda)}{\partial \log\lambda_i}
	}.
\end{equation}
For the off-diagonal derivatives ($i\neq j$) we obtain 
\begin{align}
	\frac{\partial \widehat \sigma_i}{\partial \log\lambda_j}
	&
	=
	\frac{\partial }{\partial \log\lambda_j}
	\partonBB{
		\frac{\lambda_{i}^2}{J}
		\widehat s_i(\log \lambda)
	}
	=
	\frac{\partial }{\partial \log\lambda_j}
	\partonBB{
		\frac{\lambda_{i}}{\prod_{h\neq i}\lambda_h}
		\widehat s_i(\log \lambda)
	}
	=
	\frac{\partial }{\partial \log\lambda_j}
	\partonBB{
		e^{\log\lambda_i}
		\,
		e^{-\log(\prod_{h\neq i}\lambda_h)}
		\,
		\widehat s_i(\log \lambda)
	}\nonumber
	\\
	&
	=
	e^{\log\lambda_i}
	\frac{\partial }{\partial \log\lambda_j}
	\partonBB{
		e^{-\log(\prod_{h\neq i}\lambda_h)}
		\,
		\widehat s_i(\log \lambda)
	}
	=
	e^{\log\lambda_i}
	\frac{\partial }{\partial \log\lambda_j}
	\partonBB{
		e^{-\sum_{h\neq i}\log(\lambda_h)}
		\,
		\widehat s_i(\log \lambda)
	}\nonumber
	\\
	&
	=
	e^{\log\lambda_i}
	\partonBB{
		-
		e^{-\sum_{h\neq i}\log(\lambda_h)}
		\,
		\widehat s_i(\log \lambda)
		+
		e^{-\sum_{h\neq i}\log(\lambda_h)}
		\,
		\frac{\partial \widehat s_i(\log \lambda)}{\partial \log\lambda_j}
	}
	\\
	&
	=
	e^{\log\lambda_i}
	e^{-\sum_{h\neq i}\log(\lambda_h)}
	\partonBB{
		-
		\widehat s_i(\log \lambda)
		+
		\frac{\partial \widehat s_i(\log \lambda)}{\partial \log\lambda_j}
	}
	\nonumber
	=
	\frac{\lambda_i^2}{J}
	\partonBB{
		-
		\widehat s_i(\log \lambda)
		+
		\,
		\frac{\partial \widehat s_i(\log \lambda)}{\partial \log\lambda_j}
	}. \nonumber
\end{align}
Evaluating the sum $\frac{\partial \widehat \sigma_i}{\partial \log\lambda_j}+\frac{\partial \widehat \sigma_j}{\partial \log\lambda_i}$, we have
\begin{align}\label{New_new_ident}
	\frac{J}{2 \, \lambda_i^2 \, \lambda_j^2}\partonBB{
		\frac{\partial \widehat \sigma_i}{\partial \log\lambda_j}
		+
		\frac{\partial \widehat \sigma_j}{\partial \log\lambda_i}
	}
	&
	=
	\frac{J}{2 \, \lambda_i^2 \, \lambda_j^2}\partonBB{
		- \frac{\lambda_i^2}{J} \, \widehat s_i(\log \lambda)
		+ \frac{\lambda_i^2}{J} \, \frac{\partial \widehat s_i(\log \lambda)}{\partial \log\lambda_j}
		- \frac{\lambda_j^2}{J} \, \widehat s_j(\log \lambda)
		+ \frac{\lambda_j^2}{J} \, \frac{\partial \widehat s_j(\log \lambda)}{\partial \log\lambda_i}
	}\nonumber
	\\
	&
	=
	\frac{1}{2}
	\partonBB{
		\frac{1}{\lambda_j^2}
		\,
		\frac{\partial \widehat s_i(\log \lambda)}{\partial \log\lambda_j}
		+
		\frac{1}{\lambda_i^2}
		\,
		\frac{\partial \widehat s_j(\log \lambda)}{\partial \log\lambda_i} 
	}
	-
	\frac{1}{2}
	\partonBB{
		\frac{\widehat s_j(\log \lambda)}{\lambda_i^2}
		+
		\frac{\widehat s_i(\log \lambda)}{\lambda_j^2}    
	}.
\end{align}
Now, let us explore the link between $\frac{\partial\breve s_i}{\partial e_j}$ and $\frac{\partial\widehat s_i}{\partial \log\lambda_j}$. From the identities $\widehat s_i(\log \lambda)=\breve s_i(e(\lambda))$, deriving w.r.t.\ $\lambda_j$, we obtain
\begin{equation}
	\begin{cases}
		\displaystyle 
		\frac{\partial \breve s_i(e(\lambda))}{\partial\lambda_j}
		=
		\scal{\pD_e \, \breve s_i(e(\lambda))}{ \frac{\partial e}{\partial \lambda_j}}
		=
		\sum_{k=1}^3 \frac{\partial\breve s_i}{\partial e_k}\partonb{e(\lambda)}\frac{\partial e_k}{\partial \lambda_j}
		=
		\sum_{k=1}^3 \frac{\partial\breve s_i}{\partial e_k}\partonb{e(\lambda)} \, \lambda_j \, \delta_{jk}
		=
		\frac{ \partial \breve s_i}{\partial e_j}\partonb{e(\lambda)} \; \lambda_j,
		\\[6mm]
		\displaystyle 
		\frac{\partial \widehat s_i(\log \lambda)}{\partial\lambda_j}
		=
		\scal{\pD_{\log\lambda} \, \widehat s_i(\log \lambda)}{ \frac{\partial\log \lambda}{\partial \lambda_j}}
		=
		\sum_{k=1}^3 \frac{\partial \widehat s_i(\log \lambda)}{\partial (\log \lambda_k)} \, \frac{1}{\lambda_j} \, \delta_{kj}
		=
		\frac{\partial \widehat s_i(\log \lambda)}{\partial (\log \lambda_j)} \, \frac{1}{\lambda_j}.
	\end{cases}
\end{equation}
Hence from the identity 
$
\frac{\partial}{\partial \lambda_j}\breve s_i\partonb{e(\lambda)}
=
\frac{\partial}{\partial \lambda_j}\widehat s_i(\log \lambda)
$ 
we see
\begin{equation}\label{new_ident}
	\frac{\partial \widehat s_i(\log \lambda)}{\partial (\log \lambda_j)}
	=
	\frac{ \partial \breve s_i}{\partial e_j}\partonb{e(\lambda)} \; \lambda_j^2
	\qquad
	\qquad
	\forall i,j\in\{1,2,3\}.
\end{equation}
Therefore, using in \eqref{New_new_ident} the identities established in \eqref{new_ident}, we obtain
\begin{align}
	\frac{J}{2 \, \lambda_i^2 \, \lambda_j^2}\partonBB{
		\frac{\partial \widehat \sigma_i}{\partial \log\lambda_j}
		+
		\frac{\partial \widehat \sigma_j}{\partial \log\lambda_i}
	}
	&
	=
	\frac{1}{2}
	\partonBB{
		\frac{1}{\lambda_j^2}
		\,
		\frac{\partial \widehat s_i(\log \lambda)}{\partial \log\lambda_j}
		+
		\frac{1}{\lambda_i^2}
		\,
		\frac{\partial \widehat s_j(\log \lambda)}{\partial \log\lambda_i} 
	}
	-
	\frac{1}{2}
	\partonBB{
		\frac{\widehat s_j(\log \lambda)}{\lambda_i^2}
		+
		\frac{\widehat s_i(\log \lambda)}{\lambda_j^2}    
	}\nonumber
	\\
	&
	=
	\frac{1}{2}
	\partonBB{ 
		\frac{ \partial \breve s_i}{\partial e_j}\partonb{e(\lambda)}
		+
		\frac{ \partial \breve s_j}{\partial e_i}\partonb{e(\lambda)}
	}
	-
	\frac{1}{2}
	\partonBB{ 
		\frac{\breve s_j\partonb{e(\lambda)}}{\lambda_i^2}
		+
		\frac{\breve s_i\partonb{e(\lambda)}}{\lambda_j^2}
	}
\end{align}
when $i\neq j$ and 
\begin{equation}
	\frac{J}{\lambda_i^4} \, \frac{\partial \widehat\sigma_i}{\partial \log\lambda_i}
	=
	\frac{1}{\lambda_i^2}
	\,
	\partonBB{
		\widehat s_i(\log \lambda)
		+
		\,
		\frac{\partial \widehat s_i(\log \lambda)}{\partial \log\lambda_i}
	}
	=
	\frac{1}{\lambda_i^2}
	\,
	\frac{\partial \widehat s_i(\log \lambda)}{\partial \log\lambda_i}
	+
	\frac{\widehat s_i(\log \lambda)}{\lambda_i^2}
	=
	\frac{ \partial \breve s_i}{\partial e_j}\partonb{e(\lambda)}
	+
	\frac{\breve s_i\partonb{e(\lambda)}}{\lambda_i^2}
\end{equation}
when $i=j$. In other words, we have established that
\begin{equation}
	J
	\scal{\begin{pmatrix}
			\frac{\dot E_{11}}{\lambda_1^2} \\[1mm] \frac{\dot E_{22}}{\lambda_2^2} \\[1mm] \frac{\dot E_{33}}{\lambda_3^2} 
	\end{pmatrix}\!\!}{ 
		\sym \pD_{\log\lambda}\widehat\sigma 
		. \!\!
		\begin{pmatrix}
			\frac{\dot E_{11}}{\lambda_1^2} \\[1mm] \frac{\dot E_{22}}{\lambda_2^2} \\[1mm] \frac{\dot E_{33}}{\lambda_3^2} 
	\end{pmatrix}}_{\mathclap{\bR^{3}}}
	=
	\scal{
		\begin{pmatrix}
			\dot E_{11} \\ \dot E_{22} \\ \dot E_{33} 
	\end{pmatrix}\!\!}{ \sym \widehat Q_{\ela(1)} .\!\! \begin{pmatrix}
			\dot E_{11} \\ \dot E_{22} \\ \dot E_{33} 
	\end{pmatrix}}_{\!\!\!\bR^{3}}
	=
	\scal{
		\begin{pmatrix}
			\dot E_{11} \\ \dot E_{22} \\ \dot E_{33} 
	\end{pmatrix}\!\!}{ \widehat Q_{\ela(1)} . \!\! \begin{pmatrix}
			\dot E_{11} \\ \dot E_{22} \\ \dot E_{33} 
	\end{pmatrix}}_{\!\!\!\bR^{3}},    
\end{equation}
from which follows that 
\[
    Q_{\ela(1)} > 0
    \qquad
    \Longleftrightarrow
    \qquad
    \sym \pD_{\log\lambda} \widehat \sigma(\log \lambda) \in \Sym^{++}(3).
\]
\subsubsection{Quadratic form $Q_{\ela(2)}$}

Via the established identities 
$
\widehat\sigma_i
=
\frac{\lambda_{i}^2}{J}
\widehat s_i(\log \lambda)
=
\frac{\lambda_{i}^2}{J}
\breve s_i(e(\lambda))
\;
\Longleftrightarrow
\;
\frac{J}{\lambda_{i}^2}
\,
\widehat\sigma_i
=
\widehat s_i(\log \lambda)
=
\breve s_i(e(\lambda))
$
in \eqref{eqrelationfortau} we obtain
\begin{align}
	Q_{\ela(2)}(\dot E_{12},\dot E_{23}, \dot E_{31})
	&
	= 
	\sum_{i\neq j}
	\partonbb{
		\frac{
			\breve s_i
			-
			\breve s_j
		}
		{e_i-e_j}
		+
		\frac{\breve s_j}{\lambda_i^2}
		+
		\frac{\breve s_i}{\lambda_j^2}
	}
	\dot{E}_{ij}^2
	=
	\sum_{i\neq j}
	\partonBB{
		\frac{
			\frac{J}{\lambda_i^2} \widehat\sigma_i^2-\frac{J}{\lambda_j^2}\widehat\sigma_j^2}{e_i-e_j}
		+
		\frac{J}{\lambda_i^2\,\lambda_j^2}\widehat\sigma_j
		+
		\frac{J}{\lambda_j^2\,\lambda_i^2}\widehat\sigma_i
	}\dot{E}_{ij}^2\nonumber
	\\
	&
	=
	J
	\sum_{i\neq j}
	\partonBB{
		\frac{1}{\underbrace{e_i-e_j}_{\mathclap{=\,\frac{1}{2}\lambda_i^2-\frac{1}{2}-\frac{1}{2}\lambda_j^2+\frac{1}{2}}}}
		\,
		\frac{\lambda_j^2\widehat\sigma_i-\lambda_i^2\widehat\sigma_j}{\lambda_i^2\lambda_j^2}
		+
		\frac{\widehat\sigma_i+\widehat\sigma_j}{\lambda_i^2\,\lambda_j^2}
	}\dot{E}_{ij}^2
	=
	J
	\sum_{i\neq j}
	\partonBB{
		\frac{2}{\lambda_i^2-\lambda_j^2}
		\,
		\frac{\lambda_j^2\widehat\sigma_i-\lambda_i^2\widehat\sigma_j}{\lambda_i^2\lambda_j^2}
		+
		\frac{\widehat\sigma_i+\widehat\sigma_j}{\lambda_i^2\,\lambda_j^2}
	}\dot{E}_{ij}^2\nonumber
	\\[-5mm]
	&
	\label{eqlastone1}
	=
	J
	\sum_{i\neq j}
	\partonbb{
		\frac{\cancel{2} \, \lambda_j^2 \, \widehat\sigma_i - \cancel{2} \, \lambda_i^2 \, \widehat\sigma_j 
			+ \overbrace{(\widehat\sigma_i+\widehat\sigma_j)(\lambda_i^2-\lambda_j^2)}^{\mathclap{\lambda_i^2 \, \widehat\sigma_i-\cancel{\lambda_j^2 \, \widehat\sigma_i}+\cancel{\lambda_i^2 \, \widehat\sigma_j}-\lambda_j^2 \, \widehat\sigma_j}}}{(\lambda_i^2-\lambda_j^2) \, \lambda_i^2 \, \lambda_j^2}
	}\dot{E}_{ij}^2
	\\
	&
	=
	J
	\sum_{i\neq j}
	\partonbb{
		\frac{ \lambda_j^2 \, \widehat\sigma_i -  \lambda_i^2 \, \widehat\sigma_j 
			+ 
			\lambda_i^2 \, \widehat\sigma_i-\lambda_j^2 \, \widehat\sigma_j}{(\lambda_i^2-\lambda_j^2) \, \lambda_i^2 \, \lambda_j^2}
	}\dot{E}_{ij}^2
	=
	J
	\sum_{i\neq j}
	\frac{  \widehat\sigma_i \, (\lambda_i^2 + \lambda_j^2 ) 
		-   
		\widehat\sigma_j \, (\lambda_i^2 + \lambda_j^2 )}{(\lambda_i^2-\lambda_j^2) \, \lambda_i^2 \, \lambda_j^2}
	\, \dot{E}_{ij}^2
	\nonumber
	\\
	&
	=
	J
	\sum_{i\neq j}
	\frac{  (\widehat\sigma_i - \widehat\sigma_j ) \, (\lambda_i^2 + \lambda_j^2 ) 
	}{(\lambda_i^2-\lambda_j^2) \, \lambda_i^2 \, \lambda_j^2}
	\, \dot{E}_{ij}^2
	=
	J
	\sum_{i\neq j}
	\frac{  (\widehat\sigma_i - \widehat\sigma_j ) 
	}{(\lambda_i^2-\lambda_j^2)}
	\,
	\frac{\lambda_i^2 + \lambda_j^2  
	}{ \lambda_i^2 \, \lambda_j^2}
	\, \dot{E}_{ij}^2
	\nonumber
	\\
	&
	=
	J
	\sum_{i\neq j}
	\frac{  \widehat\sigma_i - \widehat\sigma_j  
	}{(\lambda_i^2-\lambda_j^2)}
	\underbrace{\partonbb{
			\frac{1  
			}{ \lambda_i^2 }
			+
			\frac{1  
			}{ \lambda_j^2}
		}
		\, \dot{E}_{ij}^2}_{>0}.
	\nonumber
\end{align}
The analysis of the positive definiteness of the quadratic form $Q_{\ela(2)}$ follows by the same arguments as in Section \ref{par:QQ}, as it is implied by the Baker-Ericksen inequalities and as in the hyperelastic case we have
\begin{equation} 
	\hspace*{-10pt}
	Q_{\ela(1)}(\dot E)>0
	\iff
	\Lambda\in\Sym^{++}(3)
	\iff
	\log V
	\to
	\widehat \sigma(\log V)
	\;\textrm{is strongly Hilbert  monotone}
	\quad
	\Longrightarrow
	\quad
	\text{BE}^+,
\end{equation}
where the last equivalence is due to Theorem \ref{theorem:mainResult}.
\clearpage
\section{Conclusion and open questions}
We have shown that the corotational stability postulate (CSP) expressed in terms of the Zaremba-Jaumann rate $\langle \frac{\DD^{\ZJ}}{\DD t}[\sigma], D \rangle > 0, \; \forall \, D \in \Sym(3) \! \setminus \! \{0\}$ is equivalent to $\sym \DD_{\log B} \widehat \sigma(\log B) \in \Sym^{++}_4(6)$, which is essentially the monotonicity of the Cauchy stress $\sigma$ with respect to the logarithmic strain $\log B$. It is still surprising that the CSP condition ``favors'' the logarithmic strain. The hidden mechanism behind this feature needs to be clarified. Moreover, certain limitations of the result and the methodology must be acknowledged. 
	\begin{enumerate}
	\item Any constitutive condition should be independent of an assumed rate. This is presently not the case.
	\item The use of the Zaremba-Jaumann rate seems somewhat adhoc. Why should the result be restricted to this rate? 
	\item Hill uses the Kirchhoff stress $\tau$ while Leblond starts with the Cauchy stress $\sigma$. Thus the question arises: Which stress should be taken in the corotational stability postulate (CSP)?
	\item The proof needs a lot of technical machinery in Lagrangean axes and seems to hide the essential ideas, thus raising the question: can we expect a similar result if we would use a different corotational rate and more importantly: how could we transfer the methods of proof?
	\item The corotational stability postulate should be motivated in a larger context. 
	\item The exponentiated Hencky energy (cf.~\cite{nedjar2018, NeffGhibaLankeit}) 
	\begin{align}
	\WW(F) = \frac{\mu}{k}\,e^{k\,\norm{\log(\sqrt{F \, F^T})}^2}+\frac{\lambda}{2\widehat{k}}\,e^{\widehat{k}\,[{\rm tr}(\log(\sqrt{F \, F^T}))]^2}, \qquad \mu > 0, \; 2 \, \mu + 3 \, \lambda > 0
	\end{align}
satisfies the CSP condition throughout but it is not polyconvex (cf.~\cite{martin2017}). It would be interesting to see CSP and polyconvexity being satisfied simultaneously.
	\item Finally, the corotational stability postulate should be put to use in showing some kind of local existence for nonlinear isotropic Cauchy-elasticity. An attempt in this regard will be made in \cite{blesgen2024}.
	\end{enumerate}
We will address all these issues in an upcoming paper \cite{agostino2024}.

\footnotesize
\bibliographystyle{plain} 
\bibliography{Leblondrefs}
\normalsize
\begin{appendix}
\section{Appendix}
\subsection{Notation} \label{appendixnotation}
\textbf{Inner product} \\
\\
For $a,b\in\R^n$ we let $\langle {a},{b}\rangle_{\R^n}$  denote the scalar product on $\R^n$ with associated vector norm $\norm{a}_{\R^n}^2=\langle {a},{a}\rangle_{\R^n}$. We denote by $\R^{n\times n}$ the set of real $n\times n$ second order tensors, written with capital letters. The standard Euclidean scalar product on $\R^{n\times n}$ is given by
$\langle {X},{Y}\rangle_{\R^{n\times n}}=\tr{(X Y^T)}$, where the superscript $^T$ is used to denote transposition. Thus the Frobenius tensor norm is $\norm{X}^2=\langle {X},{X}\rangle_{\R^{n\times n}}$, where we usually omit the subscript $\R^{n\times n}$ in writing the Frobenius tensor norm. The identity tensor on $\R^{n\times n}$ will be denoted by $\id$, so that $\tr{(X)}=\langle {X},{\id}\rangle$. \\
\\
\noindent \textbf{Frequently used spaces} 
	\begin{itemize}
	\item $\Sym(n), \rm \Sym^+(n)$ and $\Sym^{++}(n)$ denote the symmetric, positive semi-definite symmetric and positive definite symmetric second order tensors respectively.
	\item ${\rm GL}(n):=\{X\in\R^{n\times n}\;|\det{X}\neq 0\}$ denotes the general linear group.
	\item ${\rm GL}^+(n):=\{X\in\R^{n\times n}\;|\det{X}>0\}$ is the group of invertible matrices with positive determinant.
	\item $\mathrm{O}(n):=\{X\in {\rm GL}(n)\;|\;X^TX=\id\}$.
	\item ${\rm SO}(n):=\{X\in {\rm GL}(n,\R)\;|\; X^T X=\id,\;\det{X}=1\}$.
	\item $\mathfrak{so}(3):=\{X\in\mathbb{R}^{3\times3}\;|\;X^T=-X\}$ is the Lie-algebra of skew symmetric tensors.
	\item The set of positive real numbers is denoted by $\R_+:=(0,\infty)$, while $\overline{\R}_+=\R_+\cup \{\infty\}$.
	\end{itemize}
\textbf{Frequently used tensors}
	\begin{itemize}
	\item $F = \DD \varphi(x,t)$ is the Fréchet derivative (Jacobean) of the deformation $\varphi(\cdot,t) : \Omega_x \to \Omega_{\xi} \subset \R^3$. $\varphi(x,t)$ is usually assumed to be a diffeomorphism at every time $t \ge 0$ so that the inverse mapping $\varphi^{-1}(\cdot,t) : \Omega_{\xi} \to \Omega_x$ exists.
	\item $C=F^T \, F$ is the right Cauchy-Green strain tensor.
	\item $B=F\, F^T$ is the left Cauchy-Green (or Finger) strain tensor.
	\item $U = \sqrt{F^T \, F} \in \Sym^{++}(3)$ is the right stretch tensor, i.e.~the unique element of ${\rm Sym}^{++}(3)$ with $U^2=C$.
	\item $V = \sqrt{F \, F^T} \in \Sym^{++}(3)$ is the left stretch tensor, i.e.~the unique element of ${\rm Sym}^{++}(3)$ with $V^2=B$.
	\item $\log V = \frac12 \, \log B$ is the spatial logarithmic strain tensor or Hencky strain.
	\item $L = \dot F \cdot F^{-1} = \DD_\xi v(\xi)$ is the spatial velocity gradient.
	\item $v = \frac{\dif}{\dif t} \varphi(x, t)$ denotes the Eulerian velocity.
	\item $D = \sym \, L$ is the spatial rate of deformation, the Eulerian strain rate tensor.
	\item $W = \sk \, L$ is the vorticity tensor.
	\item We also have the polar decomposition $F = R \, U = V R \in {\rm GL}^+(3)$ with an orthogonal matrix $R \in \OO(3)$ (cf. Neff et al.~\cite{Neffpolardecomp}), see also \cite{LankeitNeffNakatsukasa,Neff_Nagatsukasa_logpolar13}.
	\end{itemize}
\noindent \textbf{The strain energy function $\WW(F)$} \\
\\
We are only concerned with rotationally symmetric functions $\WW(F)$ (objective and isotropic), i.e.
	\begin{equation*}
	\WW(F)={\WW}(Q_1^T\, F\, Q_2), \qquad \forall \, F \in {\rm GL}^+(3), \qquad  Q_1 , Q_2 \in {\rm SO}(3).
	\end{equation*}
\textbf{List of additional definitions and useful identities}
	\begin{itemize}
	\item For two metric spaces $X, Y$ and a linear map $L: X \to Y$ with argument $v \in X$ we write $L.v:=L(v)$. This applies to a second order tensor $A$ and a vector $v$ as $A.v$ as well as a fourth order tensor $\C$ and a second order tensor $H$ as $\C.H$.
	\item We denote the space of minor and major symmetric, positive definite fourth order tensors $\C$ by $\Sym^{++}_4(6)$, i.e.~$\C \in \Sym^{++}_4(6)$ if and only if $\langle \C.D, D \rangle > 0$ for all $D \in \Sym(3) \! \setminus \! \{0\}$.
	\item We define $J = \det{F}$ and denote by $\Cof(X) = (\det X)X^{-T}$ the cofactor of a matrix in ${\rm GL}^{+}(3)$.
	\item We define $\sym X = \frac12 \, (X + X^T)$ and $\sk X = \frac12 \, (X - X^T)$ as well as $\dev X = X - \frac13 \, \tr(X) \cdot \id$.
	\item For all vectors $\xi,\eta\in\R^3$ we have the tensor product $(\xi\otimes\eta)_{ij}=\xi_i\,\eta_j$.
	\item $S_1=\DD_F \WW(F) = \sigma \cdot \Cof F$ is the non-symmetric first Piola-Kirchhoff stress tensor.
	\item $S_2=F^{-1}S_1=2\,\DD_C \widetilde{\WW}(C)$ is the symmetric second  Piola-Kirchhoff stress tensor.
	\item $\sigma=\frac{1}{J}\,  S_1\, F^T=\frac{1}{J}\,  F\,S_2\, F^T=\frac{2}{J}\DD_B \widetilde{\WW}(B)\, B=\frac{1}{J}\DD_V \widetilde{\WW}(V)\, V = \frac{1}{J} \, \DD_{\log V} \widehat \WW(\log V)$ is the symmetric Cauchy stress tensor.
	\item $\sigma = \frac{1}{J} \, F\, S_2 \, F^T = \frac{2}{J} \, F \, \DD_C \widetilde{\WW}(C) \, F^T$ is the ''\textbf{Doyle-Ericksen formula}'' \cite{doyle1956}.
	\item For $\sigma: \Sym(3) \to \Sym(3)$ we denote by $\DD_B \sigma(B)$ with $\sigma(B+H) = \sigma(B) + \DD_B \sigma(B).H + o(H)$ the Fréchet-derivative. For $\sigma: \Sym^+(3) \subset \Sym(3) \to \Sym(3)$ the same applies. Similarly, for $\WW : \R^{3 \times 3} \to \R$ we have $\WW(X + H) = \WW(X) + \langle \DD_X \WW(X), H \rangle + o(H)$.
	\item $\tau = J \, \sigma = 2\, \DD_B \widetilde{\WW}(B)\, B $ is the symmetric Kirchhoff stress tensor.
	\item $\tau = \DD_{\log V} \widehat{\WW}(\log V)$ is the ``\textbf{Richter-formula}'' \cite{richter1948isotrope, richter1949hauptaufsatze}.
	\item $\sigma_i =\dd\frac{1}{\lambda_1\lambda_2\lambda_3}\dd\lambda_i\frac{\partial g(\lambda_1,\lambda_2,\lambda_3)}{\partial \lambda_i}=\dd\frac{1}{\lambda_j\lambda_k}\dd\frac{\partial g(\lambda_1,\lambda_2,\lambda_3)}{\partial \lambda_i}, \ \ i\neq j\neq k \neq i$ are the principal Cauchy stresses (the eigenvalues of the Cauchy stress tensor $\sigma$), where $g:\mathbb{R}_+^3\to \mathbb{R}$ is the unique function  of the singular values of $U$ (the principal stretches) such that $\WW(F)=\widetilde{\WW}(U)=g(\lambda_1,\lambda_2,\lambda_3)$.
	\item $\sigma_i =\dd\frac{1}{\lambda_1\lambda_2\lambda_3}\frac{\partial \widehat{g}(\log \lambda_1,\log \lambda_2,\log \lambda_3)}{\partial \log \lambda_i}$, where $\widehat{g}:\mathbb{R}^3\to \mathbb{R}$ is the unique function such that \\ \hspace*{0.3cm} $\widehat{g}(\log \lambda_1,\log \lambda_2,\log \lambda_3):=g(\lambda_1,\lambda_2,\lambda_3)$.
	\item $\tau_i =J\, \sigma_i=\dd\lambda_i\frac{\partial g(\lambda_1,\lambda_2,\lambda_3)}{\partial \lambda_i}=\frac{\partial \widehat{g}(\log \lambda_1,\log \lambda_2,\log \lambda_3)}{\partial \log \lambda_i}$ \, . 
	\end{itemize}
\subsection{Hilbert-monotonicity }\label{ips2}
Regarding Hilbert-monotonicity, we recall the following properties from Ghiba et al.~\cite{ghiba2024}.
\begin{definition} \cite{NeffMartin14}
A tensor function $\Sigma_f:{\rm Sym}^{++}(3)\to\Sym^{++}\,$ is called  {\bf strictly Hilbert-monotone} if
	\begin{align}
	\label{eq:introductionMatrixMonotonicity}
	\iprod{\Sigma_f(U)-\Sigma_f(\overline U),\,U-\overline U}_{\R^{3\times 3}}>0\qquad\forall\,U\neq\overline U\in{\rm Sym}^{++}(3)\,.
	\end{align}
We refer to this inequality as strict {\bf{Hilbert-space matrix-monotonicity}} of the tensor function $\Sigma_f$.  
\end{definition}
\begin{definition}\cite{NeffMartin14}
A vector function  $f:\R^3_+\to\R^3$ is {\bf strictly vector monotone} if 
	\begin{align}
	\iprod{f(\lambda)-f(\overline\lambda),\,\lambda-\overline\lambda}_{\R^3}>0\qquad\forall\lambda\neq\overline\lambda\in\R^3_+.
	\end{align}
\end{definition}
\begin{definition}
A differentiable function $f$ between finite-dimensional Hilbert spaces is called {\bf strongly monotone} if $\DD f$ is positive definite everywhere. Thus ``stronlgy'' implies ``strictly'', see e.g.~Remark \ref{remarkmon}.
\end{definition}
\noindent Note that for an arbitrary vector function $f: \R^3_+ \to \R^3$, ${\rm D}f\,(\lambda_1,\lambda_2,\lambda_3)$ in itself might not be symmetric. One main goal of a forthcoming paper \cite{MartinVossGhibaNeff}  is to proof the following result, thereby elucidating on Ogden's work \cite[last page in Appendix]{Ogden83}, based on the seminal contributions of Hill \cite{hill1968constitutivea,hill1968constitutiveb,hill1970constitutive}: 
\begin{thm}
	\label{theorem:mainResult}
	A symmetric function $f:\R^3_+\to\R^3$ is strictly (strongly) vector-monotone if and only if $\Sigma_f$ is strictly (strongly) matrix-monotone.
\end{thm}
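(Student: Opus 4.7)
The plan is to prove both equivalences (strict and strong) by establishing the two directions separately, with the ``easy'' direction following by specialization to diagonal matrices and the ``hard'' direction requiring the full interplay between spectral decomposition and the symmetry hypothesis on $f$.

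\textbf{Easy direction (matrix $\Rightarrow$ vector).} For any $\lambda,\overline\lambda\in\R_+^3$, set $U=\diag(\lambda_1,\lambda_2,\lambda_3)$ and $\overline U=\diag(\overline\lambda_1,\overline\lambda_2,\overline\lambda_3)$. By isotropy, $\Sigma_f(U)=\diag(f_1(\lambda),f_2(\lambda),f_3(\lambda))$, and the Frobenius inner product reduces to the Euclidean one on $\R^3$, so strict (resp.\ strong) matrix-monotonicity restricted to diagonal matrices is precisely strict (resp.\ strong) vector-monotonicity of $f$.

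\textbf{Hard direction, strict case (vector $\Rightarrow$ matrix).} Take $U,\overline U\in\Sym^{++}(3)$, $U\neq\overline U$, with spectral decompositions $U=\sum_i\lambda_i\,u^i\otimes u^i$ and $\overline U=\sum_j\overline\lambda_j\,\overline u^j\otimes\overline u^j$. A direct computation yields the key identity
\begin{equation*}
\langle \Sigma_f(U)-\Sigma_f(\overline U),\,U-\overline U\rangle
=
\sum_{i,j=1}^3 M_{ij}\,\bigl(f_i(\lambda)-f_j(\overline\lambda)\bigr)\bigl(\lambda_i-\overline\lambda_j\bigr),
\qquad M_{ij}:=\langle u^i,\overline u^j\rangle^2.
\end{equation*}
The matrix $M$ is doubly stochastic (rows and columns of an orthogonal change of basis sum to $1$ in squared modulus), hence by Birkhoff's theorem $M=\sum_k \alpha_k P_{\pi_k}$ with $\alpha_k>0$, $\sum_k \alpha_k=1$, and $\pi_k$ permutations. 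Using the symmetry of $f$ (which gives $f_i(P_\pi\mu)=f_{\pi^{-1}(i)}(\mu)$ for any permutation $\pi$), each permutation term rewrites as
\begin{equation*}
\sum_i \bigl(f_i(\lambda)-f_i(P_{\pi_k}\overline\lambda)\bigr)\bigl(\lambda_i-(P_{\pi_k}\overline\lambda)_i\bigr)\;\ge 0
\end{equation*}
by strict vector-monotonicity, so the total sum is $\ge 0$. To show strictness, observe that if the sum vanished, every term in the Birkhoff combination would vanish, forcing $\lambda=P_{\pi_k}\overline\lambda$ for each active permutation $\pi_k$; combining this with the orthogonality constraints imposed by $M$ forces the eigenvectors of $U$ and $\overline U$ to match up consistently, yielding $U=\overline U$, a contradiction.

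\textbf{Hard direction, strong case.} By a standard calculation of the Fréchet derivative of an isotropic tensor function (Dalecki\u{\i}--Krein formula), at a matrix $U=\sum_i\lambda_i\,u^i\otimes u^i$ the derivative $\DD\Sigma_f(U).H$ expressed in the eigenbasis $\{u^i\}$ has diagonal entries $(\DD\Sigma_f(U).H)_{ii}=\sum_j\partial_jf_i(\lambda)\,H_{jj}$ and off-diagonal entries $(\DD\Sigma_f(U).H)_{ij}=\tfrac{f_i(\lambda)-f_j(\lambda)}{\lambda_i-\lambda_j}\,H_{ij}$ for $i\neq j$ (with the limit value when $\lambda_i=\lambda_j$). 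Hence $\langle \sym\DD\Sigma_f(U).H,H\rangle$ splits as
\begin{equation*}
\sum_{i,j}\sym\partial_jf_i(\lambda)\,H_{ii}H_{jj}\;+\;\sum_{i\neq j}\frac{f_i(\lambda)-f_j(\lambda)}{\lambda_i-\lambda_j}\,H_{ij}^2.
\end{equation*}
The first (diagonal) term is positive on nonzero diagonals of $H$ by the strong vector-monotonicity of $f$. The second (off-diagonal) term is a sum of Baker--Ericksen-type quotients which are strictly positive since, by Proposition \ref{prop3.7} (applied to the vector monotonicity of $f$), strong vector-monotonicity implies $(f_i(\lambda)-f_j(\lambda))(\lambda_i-\lambda_j)>0$ whenever $\lambda_i\neq\lambda_j$, and positivity at coincident eigenvalues follows by passing to the limit in the derivative (where the quotient becomes $\partial_if_i-\partial_jf_i$ combined with the diagonal block, yielding positivity through the principal minors of $\DD f$). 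Thus $\sym\DD\Sigma_f(U)\in\Sym_4^{++}(6)$ for every $U\in\Sym^{++}(3)$, which is the strong matrix-monotonicity of $\Sigma_f$.

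\textbf{Main obstacle.} The principal difficulty is in the strict case: the Birkhoff decomposition step yields only nonnegativity directly, and upgrading to strict positivity when $U\neq\overline U$ requires a careful rigidity argument reconciling coincidences of eigenvalues with the support of the doubly stochastic matrix $M$. A secondary technical point is the treatment of repeated eigenvalues in the strong case, where the formula for $\DD\Sigma_f$ must be interpreted via the appropriate removable singularity.
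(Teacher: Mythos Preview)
The paper does not actually prove Theorem~\ref{theorem:mainResult}. Immediately before the statement, the authors write that ``one main goal of a forthcoming paper \cite{MartinVossGhibaNeff} is to proof the following result,'' and the theorem is simply stated and then used (via Remark~\ref{remA5}) to pass between the principal-stress formulation and the tensorial one. So there is no proof in this paper to compare against; your plan is being measured against a result the authors explicitly defer.

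That said, your plan is essentially correct and follows the standard route for this type of result. A few comments:

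\textbf{Strict case.} Your Birkhoff argument is sound, and the rigidity step you flag as the ``main obstacle'' can in fact be made quite clean: once every term in the Birkhoff sum vanishes you get $\lambda = P_{\pi_k}\overline\lambda$ for each active $\pi_k$, hence $M_{ij}>0$ forces $\lambda_i=\overline\lambda_j$ (any $(i,j)$ in the support of $M$ must appear in some active permutation). Then for each $i$,
\[
\overline U\,u^i \;=\; \sum_j \overline\lambda_j\,\langle \overline u^j,u^i\rangle\,\overline u^j \;=\; \lambda_i \sum_j \langle \overline u^j,u^i\rangle\,\overline u^j \;=\; \lambda_i\,u^i,
\]
so $\overline U$ and $U$ share eigenvectors and eigenvalues, i.e.\ $\overline U=U$. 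This fills your sketch without extra machinery.

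\textbf{Strong case.} Your Dalecki\u{\i}--Krein splitting is correct. The only point worth tightening is the repeated-eigenvalue limit: when $\lambda_i=\lambda_j$ the off-diagonal coefficient becomes $\partial_i f_i(\lambda)-\partial_j f_i(\lambda)$, and positivity follows because symmetry of $f$ forces $\partial_i f_i=\partial_j f_j$ and $\partial_j f_i=\partial_i f_j$ at such points, so the relevant $2\times2$ principal block of $\sym\DD f$ has equal diagonal entries and $\langle (1,-1),\sym\DD f\,(1,-1)\rangle=2(\partial_i f_i-\partial_j f_i)>0$ by positive definiteness. Your parenthetical remark about ``principal minors of $\DD f$'' gestures at this but should be made explicit.
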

\begin{rem} \label{remA5}
Theorem \ref{theorem:mainResult} is decisive for the equivalence
	\begin{align}
	\sym \DD_{\log V} \widehat \sigma(\log V) \in \Sym^{++}_4(6) \qquad \iff \qquad \sym \frac{\partial \widehat \sigma_i}{\partial \log \lambda_j} \in \Sym^{++}(3),
	\end{align}
where the $\widehat \sigma_i$ are the principal Cauchy stresses expressed as function of the principle logarithmic strains $\log \lambda_i$.
\end{rem}
\subsection{True-Stress-True-Strain monotonicity (TSTS-M)} \label{appendixtstsm}
\begin{definition}
We define three notions of {\bf True-Stress-True-Strain monotonicity} as follows
	\begin{equation}
	\begin{alignedat}{2}
	&\text{TSTS-M:} \qquad &\langle \widehat \sigma(\log V_1)- \widehat \sigma(\log V_2),\log V_1-\log V_2\rangle&\ge 0, \qquad \forall\, V_1, V_2\in {\rm Sym}^{++}(3), \ V_1\neq V_2, \\
	&\text{TSTS-M$^+$:} \qquad &\langle \widehat \sigma(\log V_1)- \widehat \sigma(\log V_2),\log V_1-\log V_2\rangle&> 0, \qquad \forall\, V_1, V_2\in {\rm Sym}^{++}(3), \ V_1\neq V_2, \\
	&\text{TSTS-M$^{++}$:} \qquad & \sym \, \DD_{\log V} \widehat \sigma(\log V) \in \Sym^{++}_4(6). &
	\end{alignedat}
	\end{equation}
Note, that this is equivalent to monotonicity of $\widehat \sigma$ in $\log B$ since $\log B=2\, \log V$.
\end{definition}
\noindent Regarding TSTS-M$^+$, we have the following properties from \cite{NeffGhibaLankeit}.
\begin{rem}\label{remarkmon}
Sufficient for TSTS-M$^+$ is Jog and Patil's \cite{jog2013conditions} constitutive requirement that
	\begin{align}
	\text{TSTS-M$^{++}$:} \qquad \sym \, \DD_{\log B}\,\widehat \sigma(\log B) \in \Sym^{++}_4(6),
	\end{align}
\end{rem}
\begin{proof}
Let us remark that for all $B_1, B_2\in {\rm Sym}^{++}(3)$ and $0\leq t\leq 1$, we have $2\,\log V_1=\log B_1, \, 2\,\log V_2=\log B_2$ and $t\, (\log
V_1-\log V_2)+\log V_2\in{\rm Sym}(3)$, where $V_1^2=B_1,\, V_2^2=B_2$ . Moreover, we have
	 \begin{align}\label{Joginespr}
	  \langle \widehat \sigma(\log B_1)&- \widehat \sigma(\log B_2),\log B_1-\log B_2\rangle=2\,\langle \widehat \sigma(2\,\log V_1)- \widehat \sigma(2\,\log V_2),\log V_1-\log
	  V_2\rangle\notag\\&=2\,\left\langle\left[\int_0^1 \frac{\rm d}{\rm dt}\, \widehat \sigma \bigg(2\,t\, (\log V_1-\log V_2)+2\,\log V_2\bigg) \dif t\right],\log V_1-\log
	  V_2\right\rangle\\&=4\,\int_0^1 \left\langle\left[\DD_{\log V}\, \widehat \sigma \bigg(2\,t\, (\log V_1-\log V_2)+2\,\log V_2\bigg).\,(\log V_1-\log V_2)\right],\log
	  V_1-\log V_2\right\rangle \dif t\,.\notag \\
	  &= 4\,\int_0^1 \left\langle\left[\sym \left( \DD_{\log V}\, \widehat \sigma \bigg(2\,t\, (\log V_1-\log V_2)+2\,\log V_2\bigg) \right) .\,(\log V_1-\log V_2)\right],\log
	  V_1-\log V_2\right\rangle \dif t\,.\notag
	 \end{align}
Where the last equation of \eqref{Joginespr} is due to the fact that 
for any skew symmetric matrix $A\in\mathfrak{so}(3)$, it always holds
$
\scal{A.v}{v}_{\bR^{3}}
=0
$
for every $v\in\bR^3$. Using that the integrand is non-negative, due to the assumption that $\Lambda = \sym \, \DD_{\log V} \widehat \sigma(\log V)$ is positive definite, the TSTS-M$^+$ condition
 follows.
\end{proof}
\begin{rem}
As an easy consequence of the previous remark we obtain the implications
	\begin{align}
	\text{TSTS-M$^{++}$} \qquad \implies \qquad \text{TSTS-M$^{+}$} \qquad \implies \qquad \text{TSTS-M}
	\end{align}
as well as the equivalence \qquad
	\fbox{
	\begin{minipage}[h!]{0.65\linewidth}
		\centering
		TSTS-M$^{++}$ \qquad $\iff$ \qquad corotational stability postulate (CSP).
	\end{minipage}}
\end{rem}
\subsection{Hill's inequality} \label{appendixhill}
In this Appendix we rederive\footnote
{
Many collegues agree that Hill's papers are difficult to read (to say the least). He is voluntarily skipping details of the mathematical development, but apparently Hill's final results have always been correct.
}
the result by Hill \cite{hill1968constitutivea} following the detailed method of Lagrangian axis for both the hyperelastic and the Cauchy-elastic case. Note that the result, which in our notation reads
	\begin{align}
	\langle \frac{\DD^{\ZJ}}{\DD t}[\tau], D \rangle > 0 \quad &\iff \quad \DD_{\log V} \widehat \tau(\log V) \in \Sym^{++}_4(6) \\
	& \implies \quad \langle \widehat \tau(\log V_1) - \widehat \tau (\log V_2), \log V_1 - \log V_2 \rangle > 0 \qquad \forall \, V_1, V_2 \in \Sym^{++}(3), \quad V_1 \neq V_2, \notag
	\end{align}
has already been stated by Hill in \cite{hill1968constitutivea}. However, it seems that his development is not rigorous so that we saw the need to provide a comprehensive and self-consistent proof. 
\subsubsection{The hyperelastic case}
Let us begin by repeating the calculations from Section \ref{sec:leblond} for the Kirchhoff stress $\tau$. First, we derive the corresponding quadratic form $Q_{\tau, \hyp}$. We obtain from \eqref{eqfirsteq01} with $\tau = J \, \sigma = F \, S_2 \, F^T$ instead of $\sigma$
	\begin{equation}
	\begin{alignedat}{2}
	\frac{\DD^{\ZJ}}{\DD t}[\tau] &= \frac{\dif}{\dif t}[\tau] + [\tau \ W - W \, \tau] = \frac{\dif}{\dif t}[F \, S_2 \, F^T] + [\tau \, W - W \, \tau] \\
	&= \dot{F} \, S_2 \, F^T + F \, \dot{S}_2 \, F^T + F \, S_2 \, \dot{F}^T + [\tau \, W - W \, \tau] \\
	&= F \, \dot{S}_2 \, F^T + \underbrace{\dot{F} \, F^{-1}}_{L} \, \underbrace{F \, S_2 \, F^T}_{\tau} + \underbrace{F \, S_2 \, F^T}_{\tau} \, \underbrace{F^{-T} \, \dot{F}^T}_{L^T} + [\tau \, W - W \, \tau] \\
	&= F \, \dot{S}_2 \, F^T + L \, \tau + \tau \, L^T + \tau \, W - W \, \tau = F \, \dot{S}_2 \, F^T + D \, \tau + \tau \, D \\
	&= F \, \dot{S}_2 \, F^T + D \, F \, S_2 \, F^T + F \, S_2 \, F^T \, D.
	\end{alignedat}
	\end{equation}
Next, we calculate $\left\langle \frac{\DD^{\ZJ}}{\DD t}[\tau], D \right\rangle$, using the identities
	\begin{equation}
	\dot{E} = \frac12 \, (\dot{F}^T \, F + F^T \, \dot{F}), \qquad \quad D = F^{-T} \, \dot{E} \, F^{-1},
	\end{equation}
which yields
	\begin{align}
	J \, \left\langle \frac{\DD^{\ZJ}}{\DD t}[\tau] , D \right\rangle &= \langle F \, \dot{S_2} \, F^T + D \, F \, S_2 \, F^T + F \, S_2 \, F^T \, D, D \rangle = \langle F \, \dot{S_2} \, F^T + F^{-T} \, \dot{E} \, S_2 \, F^T + F \, S_2 \, \dot{E} \, F^{-1} , F^{-T} \, \dot{E} \, F^{-1} \rangle \notag \\
	&= \langle F \, \dot{S_2} \, F^T , F^{-T} \, \dot{E} \, F^{-1} \rangle + \langle F^{-T} \, \dot{E} \, S_2 \, F^T , F^{-T} \, \dot{E} \, F^{-1} \rangle + \langle F \, S_2 \, \dot{E} \, F^{-1} , F^{-T} \, \dot{E} \, F^{-1} \rangle \notag \\
	\label{eqappendix01}
	&= \langle \dot{S_2} , \dot{E} \rangle + \langle \underbrace{F^{-1} \, F^{-T}}_{= \; (F^T F)^{-1}} \, \dot{E} \, S_2 , \dot{E} \, \underbrace{F^{-1} \, F}_{\id} \rangle + \langle \underbrace{F^{-1} \, F}_{\id} \, S_2 \, \dot{E} , \dot{E} \, \underbrace{F^{-1} \, F^{-T}}_{= \; (F^T F)^{-1}} \rangle \\
	&= \langle \dot{S_2} , \dot{E} \rangle + \langle C^{-1} \, \dot{E} \, S_2 \, \dot{E}, \id \rangle + \langle C^{-1} \, \dot{E} \, S_2 \, \dot{E} , \id \rangle = \langle \dot{S_2} , \dot{E} \rangle + 2 \, \langle C^{-1} \, \dot{E} \, S_2 \, \dot{E} , \id \rangle \notag \\
	&= \langle \dot{S_2} , \dot{E} \rangle + 2 \, \tr(C^{-1} \, \dot{E} \, S_2 \, \dot{E}) \, . \notag
	\end{align}
This leads to the definition of the quadratic form
	\begin{align}
	\label{eqtau1}
	Q_{\tau, \hyp}(\dot{E}):=  \langle \dot{S_2} , \dot{E} \rangle + 2 \, \tr(C^{-1} \, \dot{E} \, S_2 \, \dot{E}).
	\end{align}
Then we may use the orthonormal frame $\{U^i(x,t)\}_{i=1}^3$ of eigenvectors of $E(x,t) = \frac12 \, (C(x,t) - \id)$ with eigenvalues $\{e_i(x,t)\}_{i=1}^3$ and the function $\breve{\WW}$, to express all quantities from \eqref{eqappendix01} in terms of the orthonormal frame. This once again leads to the identities
	\begin{equation}
	\begin{alignedat}{2}
	\langle \dot{S}_2, \dot{E} \rangle
	&
	=
	\sum_{i,j=1}^3
	\frac{\partial^2\breve{\WW}}{\partial e_i\partial e_j} \, \dot{E}_{jj}\, \dot{E}_{ii}
	+
	\sum_{i\neq j}
	\frac{\frac{\partial\breve{\WW}}{\partial e_i}-\frac{\partial\breve{\WW}}{\partial e_j}}{e_i-e_j}\dot{E}_{ij}^2, \\
	\tr(C^{-1} \, \dot{E} \, S_2 \, \dot{E})
	&
	=
	\sum_{i=1}^3\frac{1}{\lambda_i^2}\dot E_{ii}^2\frac{\partial \breve{\WW}}{\partial e_i}
	+
	\frac{1}{2}
	\sum_{i\neq j}\partonbb{\frac{1}{\lambda_i^2}\frac{\partial \breve{\WW}}{\partial e_j}+\frac{1}{\lambda_j^2}\frac{\partial \breve{\WW}}{\partial e_i}} \dot E_{ij}^2.
	\end{alignedat}
	\end{equation}
Thus, the quadratic form defined by \eqref{eqtau1} is the sum of the two \textit{independent} quadratic forms, each of which must therefore be positive definite:
\begin{align}
    Q_{\tau, \hyp}(\dot E)
    =
    \underbrace{Q_{\tau, \hyp(1)}(\dot E_{11},\dot E_{22}, \dot E_{33})}_{Q_{\tau, \hyp(1)}(\dot e_{1},\dot e_{2}, \dot e_{3})}
    +
    Q_{\tau, \hyp(2)}(\dot E_{12},\dot E_{23}, \dot E_{31}),
\end{align}
where 
\begin{align}
\label{eqappendix02}
	Q_{\tau, \hyp(1)}(\dot E_{11},\dot E_{22}, \dot E_{33})
	&
	=
	\sum_{i,j=1}^3 \frac{\partial^2 \breve{\WW}}{\partial e_i\partial e_j} \dot E_{ii} \dot E_{jj}
	+
	\sum_{i=1}^3\frac{2}{\lambda_i^2}\frac{\partial \breve{\WW}}{\partial e_i}\dot E_{ii}^2 
	=
	\sum_{i=1}^3\partonbb{
		\frac{\partial^2 \breve{\WW}}{\partial e_i^2}+\frac{2}{\lambda_i^2}\frac{\partial \breve{\WW}}{\partial e_i}
	}\dot E_{ii}^2
	+
	\sum_{i\neq j}
		\frac{\partial^2 \breve{\WW}}{\partial e_i\partial e_j}
	\dot E_{ii}
	\dot{E}_{jj}
\end{align}
and 
\begin{align}
	Q_{\tau, \hyp(2)}(\dot E_{12},\dot E_{23}, \dot E_{31})
	= 
	\sum_{i\neq j}
	\partonbb{
		\frac{\frac{\partial\breve{\WW}}{\partial e_i}-\frac{\partial\breve{\WW}}{\partial e_j}}{e_i-e_j}
		+
		\frac{1}{\lambda_i^2}\frac{\partial \breve{\WW}}{\partial e_j}+\frac{1}{\lambda_j^2}\frac{\partial \breve{\WW}}{\partial e_i}
	}
	\dot{E}_{ij}^2.
\end{align}
With the same calculation as in \eqref{eqeigenvaluessigma} one can prove, that $u^j = F.U^j$ are eigenvectors of $\tau$ with eigenvalues
	\begin{align}
	\label{eqtau2}
	\tau_i(\lambda_1, \lambda_2, \lambda_3) = \widehat \tau_i (\log \lambda_1, \log \lambda_2, \log \lambda_3) = \lambda_i^2 \, \frac{\partial \breve{\WW}(e)}{\partial e_i} \qquad \text{for} \qquad i \in \{1,2,3\}.
	\end{align}
Additionally, we can use the relations derived from the equality $\widehat{\WW}(\log \lambda) = \breve{\WW}(e(\lambda))$, i.e.
	\begin{equation}
	\label{eqappendix03}
	\begin{alignedat}{2}
	\frac{\partial \breve{\WW}}{\partial e_i}(e(\lambda)) \, \lambda_i &= \frac{\partial \widehat{\WW}(\log \lambda)}{\partial (\log \lambda_i)} \cdot \frac{1}{\lambda_i} ,\\
	\frac{\partial^2 \breve \WW}{\partial e_i^2}(e(\lambda)) + \frac{1}{\lambda_i^2} \, \frac{\partial \breve \WW}{\partial e_i}(e(\lambda)) &= \frac{1}{\lambda_i^4} \left( \frac{\partial^2 \widehat \WW (\log \lambda)}{\partial (\log \lambda_i)^2} - \frac{\partial \widehat \WW (\log \lambda)}{\partial (\log \lambda_i)} \right), \\
	\frac{\partial^2 \breve \WW}{\partial e_i \, \partial e_j} &= \frac{1}{\lambda_i^2 \, \lambda_j^2} \, \frac{\partial^2 \widehat \WW (\log \lambda)}{\partial (\log \lambda_i) \, \partial (\log \lambda_j)},
	\end{alignedat}
	\end{equation}
to obtain with \eqref{eqtau2}
	\begin{equation}
	\label{eqappendix04}
	\begin{alignedat}{4}
	\frac{\partial \breve \WW (e(\lambda))}{\partial e_i} &= \frac{1}{\lambda_i^2} \, \widehat \tau_i, \qquad \qquad
	 &\frac{\partial \widehat \WW (\log \lambda)}{\partial (\log \lambda_i)} &= \widehat \tau_i \, ,\\
	\frac{\partial^2 \widehat \WW(\log \lambda)}{\partial (\log \lambda_i)^2} &= \frac{\partial \widehat \tau_i}{\partial (\log \lambda_i)}, \qquad \qquad
	&\frac{\partial^2 \widehat \WW (\log \lambda)}{\partial (\log \lambda_i) \, \partial(\log \lambda_j)} &= \frac{\partial \widehat \tau_i}{\partial (\log \lambda_j)} = \frac{\partial \widehat \tau_j}{\partial (\log \lambda_i)}.
	\end{alignedat}
	\end{equation}
In the last equation of \eqref{eqappendix04} we used that $\tau = \DD_{\log V}\widehat \WW(\log V)$ (hyperelasticity) and thus the partial derivatives commute. This property is lost, when discussing the Cauchy-elastic case which is why we need to consider $\sym \DD_{\log \lambda} \widehat \tau$ instead in this case. Pairing \eqref{eqappendix03} with \eqref{eqappendix04} we obtain for \eqref{eqappendix02}
	\begin{equation}
	\begin{alignedat}{2}
	\frac{\partial^2 \breve{\WW}}{\partial e_i^2}+\frac{2}{\lambda_i^2}\frac{\partial \breve{\WW}}{\partial e_i} &= \frac{\partial^2 \breve{\WW}}{\partial e_i^2}+\frac{1}{\lambda_i^2}\frac{\partial \breve{\WW}}{\partial e_i} + \frac{1}{\lambda_i^2}\frac{\partial \breve{\WW}}{\partial e_i} = \frac{1}{\lambda_i^4} \left( \frac{\partial^2 \widehat \WW (\log \lambda)}{\partial (\log \lambda_i)^2} - \frac{\partial \widehat \WW (\log \lambda)}{\partial (\log \lambda_i)} \right) + \frac{1}{\lambda_i^2}\frac{\partial \breve{\WW}}{\partial e_i} \\
	&= \frac{1}{\lambda_i^4} \left( \frac{\partial \widehat \tau_i}{\partial (\log \lambda_i)} - \widehat \tau_i \right) + \frac{1}{\lambda_i^4} \, \widehat \tau_i = \frac{1}{\lambda_i^4} \, \frac{\partial \widehat \tau_i}{\partial (\log \lambda_i)}, \\
	\frac{\partial^2 \breve{\WW}}{\partial e_i\partial e_j} &= \frac{1}{\lambda_i^2 \, \lambda_j^2} \, \frac{\partial^2 \widehat \WW (\log \lambda)}{\partial (\log \lambda_i) \, \partial (\log \lambda_j)} = \frac{1}{2 \, \lambda_i^2 \, \lambda_j^2} \, \left( \frac{\partial \widehat \tau_i}{\partial (\log \lambda_j)} + \frac{\partial \widehat \tau_j}{\partial (\log \lambda_i)}\right).
	\end{alignedat}
	\end{equation}
Thus we may rewrite \eqref{eqappendix02} as
	\begin{align}
	Q_{\tau, \hyp(1)}(\dot{E}_{11},\dot{E}_{22},\dot{E}_{33}) 
	&= 
	\frac{1}{2} \, \sum_{i,j} \left(\frac{\partial \widehat \tau_i}{\partial (\log \lambda_j)} 
	+ 
	\frac{\partial \widehat \tau_j}{\partial (\log \lambda_i)}\right) \, \frac{\dot{E}_{ii}}{\lambda_i^2} \, \frac{\dot{E}_{jj}}{\lambda_j^2} \\
	&\hspace*{-100pt}=
	\left \langle \begin{pmatrix} \dot E_{11} \\ \dot E_{22} \\ \dot E_{33} \end{pmatrix} \!,\!
	\begin{pmatrix}
	\frac{1}{\lambda_1^4} \, \frac{\partial \widehat \tau_1}{\partial (\log \lambda_1)} 
	& \frac{1}{2 \, \lambda_1^2 \, \lambda_2^2} \, \left( \frac{\partial \widehat \tau_1}{\partial (\log \lambda_2)} + \frac{\partial \widehat \tau_2}{\partial (\log \lambda_1)} \right) 
	& \frac{1}{2 \, \lambda_1^2 \, \lambda_3^2} \, \left( \frac{\partial \widehat \tau_1}{\partial (\log \lambda_3)} + \frac{\partial \widehat \tau_3}{\partial (\log \lambda_1)} \right) \\
	\frac{1}{2 \, \lambda_1^2 \, \lambda_2^2} \, \left( \frac{\partial \widehat \tau_1}{\partial (\log \lambda_2)} + \frac{\partial \widehat \tau_2}{\partial (\log \lambda_1)} \right) 
	& \frac{1}{\lambda_2^4} \, \frac{\partial \widehat \tau_2}{\partial (\log \lambda_2)} 
	& \frac{1}{2 \, \lambda_2^2 \, \lambda_3^2} \, \left( \frac{\partial \widehat \tau_2}{\partial (\log \lambda_3)} + \frac{\partial \widehat \tau_3}{\partial (\log \lambda_2)} \right) \\
	\frac{1}{2 \, \lambda_1^2 \, \lambda_3^2} \, \left( \frac{\partial \widehat \tau_1}{\partial (\log \lambda_3)} + \frac{\partial \widehat \tau_3}{\partial (\log \lambda_1)} \right) 
	& \frac{1}{2 \, \lambda_2^2 \, \lambda_3^2} \, \left( \frac{\partial \widehat \tau_2}{\partial (\log \lambda_3)} + \frac{\partial \widehat \tau_3}{\partial (\log \lambda_2)} \right)
	& \frac{1}{\lambda_3^4} \, \frac{\partial \widehat \tau_3}{\partial (\log \lambda_3)} 
	\end{pmatrix} \!.\!
	\begin{pmatrix} \dot E_{11} \\ \dot E_{22} \\ \dot E_{33} \end{pmatrix} \right\rangle. \notag
	\end{align}
Furthermore, we note $Q_{\tau, \hyp(2)} = Q_{\hyp(2)}$, so that we may use the identity $J \widehat \sigma_i = \widehat \tau_i$ and the calculation from \eqref{eqQ2forsigma}, to conclude
\begin{align}
	\hspace*{-8pt}
	Q_{\tau, \hyp(2)}(\dot E_{12},\dot E_{23}, \dot E_{31})
	=
	Q_{\hyp(2)}
	=
	J \,
	\sum_{i\neq j}
	\frac{  \widehat \sigma_i - \widehat \sigma_j  
	}{(\lambda_i^2-\lambda_j^2)}
	\partonbb{
	\frac{1  
	}{ \lambda_i^2 }
	+
	\frac{1  
	}{ \lambda_j^2}
    }
	\, \dot{E}_{ij}^2
	=
	\sum_{i\neq j}
	\frac{  \widehat \tau_i - \widehat \tau_j  
	}{(\lambda_i^2-\lambda_j^2)}
	\underbrace{\partonbb{
	\frac{1  
	}{ \lambda_i^2 }
	+
	\frac{1  
	}{ \lambda_j^2}
    }
	\, \dot{E}_{ij}^2}_{>0}.
\end{align}
This proves, that the results for the Cauchy stress $\sigma$ also hold true for the Kirchhoff stress $\tau$ in the hyperelastic case.
\subsubsection{The Cauchy-elastic case}
Proceeding with the more general Cauchy-elastic case, we observe that by the construction in Proposition \ref{eigenbasis}, we can once again express all relevant quantities in the quadratic from
	 \begin{align}
	\label{eqtau3}
	Q_{\tau, \ela}(\dot{E}):=  \langle \dot{S_2} , \dot{E} \rangle + 2 \, \tr(C^{-1} \, \dot{E} \, S_2 \, \dot{E}),
	\end{align}
with respect to the eigenvalues and eigenvectors of $E$, leading to the relations (cf. Prop \ref{propeqall.2})
\begin{equation}
\begin{alignedat}{2}
\scalb{\dot S_2}{\dot E}
	&
	=
	\sum_{i,j=1}^3
	\frac{\partial \breve s_i}{\partial e_j}
	\, \dot{E}_{jj} \, \dot{E}_{ii}
	+
	\sum_{i\neq j}
	\frac{\breve s_i-\breve s_j}{e_i-e_j}
	\dot{E}_{ij}^2, \\
\tr\partonb{C^{-1}\dot E \, S_2\,\dot E}
	&
	=
	\sum_{i=1}^3\frac{1}{\lambda_i^2} \, \dot E_{ii}^2 \, \breve s_i
	+
	\frac{1}{2}
	\sum_{i\neq j}
	\partonbb{
		\frac{\breve s_j}{\lambda_i^2} 
		+
		\frac{\breve s_i }{\lambda_j^2}  
	} \dot E_{ij}^2
\end{alignedat}
\end{equation}
and thus yielding the representation
\begin{align}
    Q_{\tau, \ela}(\dot E)
    =
    Q_{\tau, \ela(1)}(\dot E_{11},\dot E_{22}, \dot E_{33})
    +
    Q_{\tau, \ela(2)}(\dot E_{12},\dot E_{23}, \dot E_{31}),
\end{align}
with
\begin{align}
	Q_{\tau, \ela(1)} = \sum_{i=1}^3 \left(\frac{\partial \breve s_i}{\partial e_i} + 2 \, \frac{\breve s_i}{\lambda_i^2}\right) \, \dot E_{ii}^2 + \sum_{i \neq j} \frac{\partial \breve s_i}{\partial e_j} \, \dot E_{ii} \, \dot E_{jj}
\end{align}
and
\begin{align}
	Q_{\tau, \ela(2)} = \sum_{i \neq j} \partonbb{\frac{\breve s_i - \breve s_j}{e_i - e_j} +
		\frac{\breve s_j}{\lambda_i^2} 
		+
		\frac{\breve s_i }{\lambda_j^2}  
	}  \, \dot E^2_{ij}.
\end{align}
Now, we define
\begin{equation}
\begin{alignedat}{2}
	\widehat Q_{\tau, \ela(1)}
	&=
	\begin{pmatrix}
		\frac{\partial \breve s_1}{\partial e_1}
		+
		2 \, \frac{\breve s_1}{\lambda_1^2}
		&
		\frac{\partial \breve s_1}{\partial e_2}
		&
		\frac{\partial \breve s_1}{\partial e_3}
		\\[2mm]
		\frac{\partial \breve s_2}{\partial e_1}
		&
		\frac{\partial \breve s_2}{\partial e_2}
		+
		2 \, \frac{\breve s_2}{\lambda_2^2}
		&
		\frac{\partial \breve s_2}{\partial e_3}
		\\[2mm]
		\frac{\partial \breve s_3}{\partial e_1}
		&
		\frac{\partial \breve s_3}{\partial e_2}
		&
		\frac{\partial \breve s_3}{\partial e_3}
		+
		2 \, \frac{\breve s_3}{\lambda_3^2}
	\end{pmatrix}, \qquad \text{with} \\
	\sym \, \widehat Q_{\tau, \ela(1)}
	&=
	\begin{pmatrix}
		\frac{\partial \breve s_1}{\partial e_1}
		+
		2 \, \frac{\breve s_1}{\lambda_1^2}
		&
		\frac12 \left(\frac{\partial \breve s_1}{\partial e_2} + \frac{\partial \breve s_2}{\partial e_1}\right)
		&
		\frac12 \left(\frac{\partial \breve s_1}{\partial e_3} + \frac{\partial \breve s_3}{\partial e_1}\right)
		\\[2mm]
		\frac12 \left(\frac{\partial \breve s_2}{\partial e_1} + \frac{\partial \breve s_1}{\partial e_2}\right)
		&
		\frac{\partial \breve s_2}{\partial e_2}
		+
		2 \, \frac{\breve s_2}{\lambda_2^2}
		&
		\frac12 \left(\frac{\partial \breve s_2}{\partial e_3} + \frac{\partial \breve s_3}{\partial e_2}\right)
		\\[2mm]
		\frac12 \left(\frac{\partial \breve s_3}{\partial e_1} + \frac{\partial \breve s_1}{\partial e_3}\right)
		&
		\frac12 \left(\frac{\partial \breve s_3}{\partial e_2} + \frac{\partial \breve s_2}{\partial e_3}\right)
		&
		\frac{\partial \breve s_3}{\partial e_3}
		+
		2 \, \frac{\breve s_3}{\lambda_3^2}
	\end{pmatrix}
\end{alignedat}
\end{equation}
and show that
\begin{equation} \label{eqtausym01}
	\scal{\begin{pmatrix}
			\frac{\dot E_{11}}{\lambda_1^2} \\[1mm] \frac{\dot E_{22}}{\lambda_2^2} \\[1mm] \frac{\dot E_{33}}{\lambda_3^2} 
	\end{pmatrix}\!\!}{ 
		\sym \pD_{\log\lambda}\widehat\tau
		. \!\!
		\begin{pmatrix}
			\frac{\dot E_{11}}{\lambda_1^2} \\[1mm] \frac{\dot E_{22}}{\lambda_2^2} \\[1mm] \frac{\dot E_{33}}{\lambda_3^2} 
	\end{pmatrix}}
	=
	\scal{
		\begin{pmatrix}
			\dot E_{11} \\ \dot E_{22} \\ \dot E_{33} 
	\end{pmatrix}\!\!}{ \sym \widehat Q_{\tau, \ela(1)} . \!\!\begin{pmatrix}
			\dot E_{11} \\ \dot E_{22} \\ \dot E_{33} 
	\end{pmatrix}}_{\! \! \! \bR^{3}}
	=
	\scal{
		\begin{pmatrix}
			\dot E_{11} \\ \dot E_{22} \\ \dot E_{33} 
	\end{pmatrix}\!\!}{ \widehat Q_{\tau, \ela(1)} . \!\!\begin{pmatrix}
			\dot E_{11} \\ \dot E_{22} \\ \dot E_{33} 
	\end{pmatrix}}_{\! \! \! \bR^{3}},    
\end{equation}
where the last equality holds because for any skew symmetric matrix $A \in \mathfrak{so}(3)$, we have $\langle A.v, v \rangle_{\R^3} = 0$ for every $v \in \R^3$. \\
\\
To show \eqref{eqtausym01}, recall the identities \eqref{new_ident}, given by
\begin{equation}
	\frac{\partial \widehat s_i(\log \lambda)}{\partial (\log \lambda_j)}
	=
	\frac{ \partial \breve s_i}{\partial e_j}\partonb{e(\lambda)} \; \lambda_j^2
	\qquad
	\qquad
	\forall i,j\in\{1,2,3\},
\end{equation}
as well as \eqref{eqrelationfortau}, which was given by
\begin{equation}
	J\,\widehat\sigma_i(\log \lambda)
	=
	\lambda_{i}^2 
	\,
	\widehat s_i(\log \lambda)
	\qquad
	\Longleftrightarrow
	\qquad
	\widehat\sigma_i(\log \lambda)
	=
	\frac{\lambda_{i}^2}{J}
	\widehat s_i(\log \lambda)
	\qquad
	\forall i\in\{1,2,3\}.
\end{equation}
Now, when deriving the components of $\sym \, \DD_{\log \lambda} \widehat \tau(\log \lambda)$, we obtain for the derivatives $\frac{\partial \widehat \tau_i}{\partial \log \lambda_i}$ with \newline $\widehat \tau(\log \lambda) = \tau(\lambda) = J \, \sigma(\lambda) = J \, \widehat \sigma(\log \lambda)$ and $\widehat s_i(\log \lambda) = \breve s_i(e(\lambda))$
	\begin{align}
	\label{eqsym01}
	\frac{\partial \widehat \tau_i}{\partial \log \lambda_i} = \frac{\partial}{\partial \log \lambda_i} \left(\mathrm{e}^{2 \, \log \lambda_i} \cdot \widehat s_i(\log \lambda) \right) = 2 \, \lambda_i^2 \, \widehat s_i + \lambda_i^2 \, \frac{\partial \widehat s_i}{\partial \log \lambda_i} = 2 \, \lambda_i^2 \, \breve s_i + \lambda_i^4 \, \frac{\partial \breve s_i}{\partial e_i}
	\end{align}
and similarly for $\frac{\partial \widehat \tau_i}{\partial \log \lambda_j}$
	\begin{align}
	\frac{\partial \widehat \tau_i}{\partial \log \lambda_j} = \lambda_i^2 \, \frac{\partial \widehat s_i(\log \lambda)}{\partial \log \lambda_j} = \lambda_i^2 \, \lambda_j^2 \, \frac{\partial \breve s_i}{\partial e_j}.
	\end{align}
While the diagonal entries of $\sym \, \DD_{\log \lambda} \widehat \tau(\log \lambda)$ are simply given by \eqref{eqsym01}, we obtain for the off-diagonal elements
	\begin{align}
	\label{eqsym02}
	\frac12 \left( \frac{\partial \widehat \tau_i}{\partial \log \lambda_j} + \frac{\partial \widehat \tau_j}{\partial \log \lambda_i} \right) = \frac12 \cdot \lambda_i^2 \, \lambda_j^2 \cdot \left(\frac{\partial \breve s_i}{\partial e_j} + \frac{\partial \breve s_j}{\partial e_i}\right).
	\end{align}
Thus, by combining \eqref{eqsym01} with \eqref{eqsym02}, we have proven that \eqref{eqtausym01} holds. \\
\\
Finally, we observe that, as for the hyperelastic case, we have $Q_{\tau, \ela(2)} = Q_{\ela(2)}$ and hence with $\widehat \tau = J \, \widehat \sigma$ we conclude for the calculation in \eqref{eqlastone1}
\begin{equation}
\begin{alignedat}{2}
	Q_{\tau, \ela(2)}(\dot E_{12},\dot E_{23}, \dot E_{31}) = Q_{\ela(2)}
	&
	= 
	\sum_{i\neq j}
	\partonbb{
		\frac{
			\breve s_i
			-
			\breve s_j
		}
		{e_i-e_j}
		+
		\frac{\breve s_j}{\lambda_i^2}
		+
		\frac{\breve s_i}{\lambda_j^2}
	}
	\dot{E}_{ij}^2
	=
	J
	\sum_{i\neq j}
	\frac{  \widehat\sigma_i - \widehat\sigma_j  
	}{(\lambda_i^2-\lambda_j^2)}
	\partonbb{
			\frac{1  
			}{ \lambda_i^2 }
			+
			\frac{1  
			}{ \lambda_j^2}
		}
		\, \dot{E}_{ij}^2
	\\
	&
	=
	\sum_{i\neq j}
	\frac{  \widehat\tau_i - \widehat\tau_j  
	}{(\lambda_i^2-\lambda_j^2)}
	\underbrace{\partonbb{
			\frac{1  
			}{ \lambda_i^2 }
			+
			\frac{1  
			}{ \lambda_j^2}
		}
		\, \dot{E}_{ij}^2}_{>0},
\end{alignedat}
\end{equation}
concluding the analysis for the Kirchhoff stress $\tau$ with the same results as those for the Cauchy stress $\sigma$.

\end{appendix}
\end{document}